\documentclass[12pt]{article}

\usepackage[table]{xcolor}
\usepackage{amsmath, amsthm}
\usepackage{amsfonts,amssymb}
\usepackage{hyperref}
\usepackage{authblk}
\usepackage[all]{xy}
\usepackage{enumitem}
\usepackage{stmaryrd}

\usepackage{tikz}
\usetikzlibrary{intersections}

\newcommand{\N}{\mathbb{N}}
\newcommand{\Z}{\mathbb{Z}}
\newcommand{\Q}{\mathbb{Q}}
\newcommand{\F}{\mathbb{F}}
\renewcommand{\P}{\mathbb{P}}
\newcommand{\R}{\mathbb{R}}
\newcommand{\C}{\mathbb{C}}
\newcommand{\K}{\mathbb{K}}

\renewcommand{\l}{\mathfrak{l}}

\newcommand{\Gal}{\operatorname{Gal}}
\newcommand{\GQ}{\Gal(\overline \Q / \Q)}
\newcommand{\GL}{\operatorname{GL}}
\newcommand{\SL}{\operatorname{SL}}
\newcommand{\PGL}{\operatorname{PGL}}
\newcommand{\PSL}{\operatorname{PSL}}
\newcommand{\GLFl}{\GL_2(\F_\ell)}
\newcommand{\SLFl}{\SL_2(\F_\ell)}
\newcommand{\PGLFl}{\PGL_2(\F_\ell)}

\newcommand{\Fl}{{\F_\ell}}
\newcommand{\Flx}{{\F_\ell^*}}
\newcommand{\Ker}{\operatorname{Ker}}
\renewcommand{\Im}{\operatorname{Im}}

\newcommand{\disc}{\operatorname{disc}}
\newcommand{\Res}{\operatorname{Res}}
\newcommand{\Aut}{\operatorname{Aut}}
\newcommand{\Sym}{\operatorname{Sym}}
\newcommand{\Hom}{\operatorname{Hom}}
\newcommand{\Ext}{\operatorname{Ext}}
\newcommand{\ab}{\operatorname{ab}}
\newcommand{\Tra}{\operatorname{Tra}}
\newcommand{\Inf}{\operatorname{Inf}}
\newcommand{\ord}{\operatorname{ord}}

\newcommand{\TableMinPolyVar}{x}

\def\saint{\mathaccent"7017 }

\newtheorem{thm}{Theorem}
\newtheorem{lem}[thm]{Lemma}
\newtheorem{pro}[thm]{Proposition}

\theoremstyle{definition}
\newtheorem{rk}[thm]{Remark}

\makeatletter
\newcommand{\subjclass}[2][2010]{%
  \let\@oldtitle\@title%
  \gdef\@title{\@oldtitle\footnotetext{#1 \emph{Mathematics subject classification:} #2}}%
}
\newcommand{\keywords}[1]{%
  \let\@@oldtitle\@title%
  \gdef\@title{\@@oldtitle\footnotetext{\emph{Key words and phrases.} #1.}}%
}
\let\c@table\c@equation
\let\c@figure\c@equation
\makeatother
\makeatother

\topmargin -2.5cm
\textheight 25cm
\textwidth 15cm

\title{Certification of modular Galois representations}
\subjclass{
11Y70, 
11S20, 
11F80, 
11F11, 
11Y40, 
20B40, 
20J06 
}
\author{Nicolas Mascot\thanks{\href{mailto:n.a.v.mascot@warwick.ac.uk}{n.a.v.mascot@warwick.ac.uk}}}
\affil{\scriptsize{University of Warwick, Coventry CV4 7AL, UK. Formerly IMB, Universit\'e Bordeaux 1, UMR 5251, F-33400 Talence, France.
CNRS, IMB, UMR 5251, F-33400 Talence, France.
INRIA, project LFANT, F-33400 Talence, France.}}

\begin{document}

\maketitle

\begin{abstract}
We show how the output of the algorithm to compute modular Galois representations described in \cite{Moi} can be certified. We have used this process to compute certified tables of such Galois representations obtained thanks to an improved version of this algorithm, including representations modulo primes up to $31$ and representations attached to a newform with non-rational (but of course algebraic) coefficients, which had never been done before. These computations take place in the Jacobian of modular curves of genus up to $26$. The resulting data are available on the author's \href{http://www2.warwick.ac.uk/fac/sci/maths/people/staff/mascot/galreps}{webpage}.
\end{abstract}

\renewcommand{\abstractname}{Acknowledgements}
\begin{abstract}
The computations presented here would not have been amenable without Bill Allombert, who suggested to me the idea of step-by-step polynomial reduction, and Karim Belabas and Denis Simon, who provided me their \cite{idealsqrt} script. I also thank J. Kl\"uners for the useful discussions I have had with him in July 2015 in Oberwolfach about the algorithmic computation of Galois groups, D. Holt for his help in permutation group theory, and my friend and colleague A. Page for the clever suggestions that he provided me. Finally, I wish to address my thanks to the anonymous reviewer of the previous version of this article for suggesting a much simpler method to certify some Galois groups (cf. section \ref{Galois_geom}) and more generally for his insightful suggestions, which have helped me to make this article clearer.

This research was supported by the French ANR-12-BS01-0010-01 through the project PEACE, by the DGA ma{\^\i}trise de l'information, and by the EPSRC Programme Grant EP/K034383/1 ``LMF: L-Functions and Modular Forms''. 

The computations presented in this paper were partly carried out using the PlaFRIM experimental testbed, being developed under the Inria PlaFRIM development action with support from LABRI and IMB and other entities: Conseil R\'egional d'Aquitaine, FeDER, Universit\'e de Bordeaux and CNRS (see \href{https://plafrim.bordeaux.inria.fr/}{https://plafrim.bordeaux.inria.fr/}), and partly on the Warwick mathematics institute computer cluster provided by the aforementioned EPSRC grant. The computer algebra packages used were \cite{Sage}, \cite{gp} and \cite{Magma}.
\end{abstract}

\newpage

We begin with a short summary about Galois representations attached to modular forms and how we used these in \cite{Moi} to compute Fourier coefficients of modular forms in section \ref{Background}. This computation becomes much easier if the polynomial in $\Q[x]$ defining the representation and computed by the algorithm along the way is reduced, and we show new ideas to do so efficiently in section \ref{Polred}. We then show in section \ref{Sect_certif} how the outputs of this computation can be formally certified. Finally, we comment on the use of this certification method on our own data in the last section \ref{Tables}.

\section{Introduction}\label{Background}

Let $f = q+\sum_{n=2}^{+\infty} a_n q^n \in S_k\big(\Gamma_1(N),\varepsilon\big)$ be a classical newform of weight $k \in \N_{\geqslant 2}$, level $N \in \N_{\geqslant 1}$ and nebentypus $\varepsilon$. Jean-Pierre Serre conjectured and Pierre Deligne proved in \cite{Del} that for every finite prime $\l$ of the number field $K_f = \Q(a_n, \ n \geqslant 2)$ spanned by the coefficients $a_n$ of the $q$-expansion of $f$ at infinity, there exists a continuous Galois representation
\[ \GQ \longrightarrow \GL_2(\Z_{K_{f,\l}}) \]
which is unramified outside $\ell N$ and such that the image of any Frobenius element at $p \nmid \ell N$ has characteristic polynomial $x^2-a_p x + \varepsilon(p) p^{k-1} \in \Z_{K_{f,\l}}[x]$, where $\Z_{K_{f,\l}}$ denotes the $\l$-adic completion of the ring of integers $\Z_{K_f}$ of $K_f$, and $\ell$ is the rational prime lying below $\l$.

Let $\F_\l$ be the residue field of $\l$. By reducing the above $\l$-adic Galois representation modulo $\l$, we get a modulo $\l$ Galois representation
\[ \rho_{f,\l} \colon \GQ \longrightarrow \GL_2(\F_\l), \]
which is unramified outside $\ell N$ and such that the image of any Frobenius element at $p \nmid \ell N$ has characteristic polynomial $x^2-a_p x + \varepsilon(p) p^{k-1} \in \F_\l[x]$. In particular, the trace of this image is $a_p \bmod \l$.

In \cite{Moi}, we described an algorithm based on ideas from the book \cite{CE} edited by Jean-Marc Couveignes and Bas Edixhoven to compute such modulo $\l$ Galois representations, provided that the image of the Galois representation contains $\SL_2(\F_\l)$ and that $k < \ell$. This gives a way to quickly compute the coefficients $a_p$ modulo $\l$ for huge primes $p$. We have used this algorithm to compute representations attached to forms of level 1 for $\ell$ up to $31$. 

The condition that the image of the Galois representation contain $\SL_2(\F_\l)$ is generically satisfied. Indeed, by \cite[theorem 2.1]{RibSL2} and \cite[lemma 2]{SwD}, for any non-CM newform $f$ (and in particular for any newform $f$ of level $1$), the image of the representation $\rho_{f,\l}$ contains $\SL_2(\F_\l)$ for almost every $\l$. The finitely many $\l$ for which $\SL_2(\F_\l) \not \subset \Im \rho_{f,\l}$ are called \emph{exceptional primes} for $f$, and we exclude them. They were explicitly determined by Sir Peter Swinnerton-Dyer in \cite{SwD} for the known\footnote{According to Maeda's conjecture (cf \cite{Maeda2000}), there are only 6 such forms, namely $\Delta$, $E_4 \Delta$, $E_6 \Delta$, $E_8 \Delta$, $E_{10} \Delta$ and $E_{14} \Delta$, of respective weights $12$, $16$, $18$, $20$, $22$ and $26$.} newforms $f$ of level $1$ whose coefficients $a_n$ are rational. In our case, this means we exclude $\l = 23$ for $f=\Delta$ and $\l=31$ for $f = E_4 \Delta$.

\pagebreak

In what follows,we will assume that the inertial degree of $\l$ is $1$, so that $\F_\l = \Fl$. Indeed, although there is no theoretical obstacle to allowing primes of higher degree, we will have to deal explicitly with objects such as polynomials whose roots are indexed by $\Fl^2\setminus\{(0,0)\}$ and whose Galois group is $\GLFl$, and this already requires considerable work when $\F_\l = \Fl$.

Our algorithm relies on the fact that if $k < \ell$, then the Galois representation $\rho_{f,\l}$ is afforded with multiplicity $1$ by a subspace $V_{f,\l}$ of the $\ell$-torsion of the Jacobian $J_1(\ell N)$ of the modular curve $X_1(\ell N)$ under the natural $\GQ$-action, cf. \cite[proposition 9.3.2]{Gross} and \cite[section 1]{Moi}.

The algorithm first computes the number field $L = \overline \Q^{\Ker \rho_{f,\l}}$ cut out by the Galois representation, by evaluating a well-chosen function $\alpha \in \Q\big( J_1(\ell N) \big)$ in the nonzero points of $V_{f,\l}$ and forming the polynomial
\[ F(x) =\prod_{\substack{v \in V_{f,\l} \\ v \neq 0}} \big(x-\alpha(v) \big) \in \Q[x] \]
of degree $\ell^2-1$ whose decomposition field is $L$. The algorithm then uses a method from T. and V. Dokchitser (cf \cite{Dok}) to compute the image of the Frobenius element at $p$ given a rational prime $p \nmid \ell N$. This method involves the computation of a family of resolvents
\[ \Gamma_C(x) = \prod_{\sigma \in C} \left( x - \sum_{\substack{v \in V_{f,\l} \\ v \neq 0}} h\big(\alpha(v) \big) \, \alpha(\sigma \cdot v) \right) \in \Q[x] \]
indexed by the conjugacy classes $C$ of $\GLFl$, where $h(x) \in \Z[x]$ is some fixed polynomial. These resolvents, which we will refer to as the Dokchitsers' resolvents, can then be used to determine which class the Frobenius element at $p$ lies in for almost all $p \in \N$.

\begin{rk}
Actually, in order to obtain certified results, we will see that we should certify the polynomial $F(x)$ in the sense of section \ref{Sect_certif} before computing the Dokchitsers' resolvents.
\end{rk}

Unfortunately, the output of the algorithm, although correct beyond reasonable doubt (cf. \cite{Moi}, end of section 1), is not certified since it relies on the identification of floating point numbers as rational numbers. The purpose of this article is to show how these computations can be formally certified subsequently. As a side effect, we also obtain much tidier outputs. 

\paragraph*{A word on notation}\mbox{}

All along this article, we will be dealing with two versions of most of the objects in play, namely the actual value of this object, and the version computed by the algorithm described above. For instance, the function $\alpha \in \Q\big( J_1(\ell N) \big)$ being fixed, the polynomial
\[ F(x) =\prod_{\substack{v \in V_{f,\l} \\ v \neq 0}} \big(x-\alpha(v) \big) \in \Q[x] \]
is a well-defined object attached to $\alpha$, $f$ and $\l$, but what the algorithm outputs is an approximate version of this polynomial over $\C$, whose coefficients are then non-rigorously identified as rational numbers. Following the reviewer's comments on an older version of this article, we will denote the ``true'' value of $F(x)$ with an aureole, $\saint F(x)$, so as to stress its ``heavenly unattainable nature'' (as the reviewer put it), and we will reserve the notation $F(x)$ to the polynomial ``guessed'' by the algorithm, and similarly for the other objects at play. We will follow this convention from now on, and we hope that doing so will reduce the confusion between the two versions of each object, and make our certification process clearer.

\section{Reducing the polynomials}\label{Polred}

Unfortunately, the coefficients of the polynomial $F(x)$ produced by the algorithm described in \cite{Moi} tend to have larger and larger height as $\ell$ grows. More precisely, in practice this polynomial is of the form
\[ F(x) = x^{\deg F} + \frac1d \sum_{i<\deg F} c_i x^i, \]
where $d$ is an (unfortunately large) positive integer and the $c_i$ are integers whose gcd with $d$ is several orders of magnitude smaller than $d$; in other words, apart from the leading one, these coefficients roughly all have the same denominator, with a few ``accidental'' simplifications here and there. The following table, which shows the genus $g = \frac{(\ell-5)(\ell-7)}{24}$ of the modular curves $X_1(\ell)$ and the rough number $h \approx \log_{10} d$ of decimal digits in the denominator $d$ of the polynomials $F(x)$ associated to newforms of level $N=1$ that we computed using the algorithm described in \cite{Moi}, seems to indicate the heuristic $h \approx g^{2.5}$:

\begin{center}
\begin{tabular}{|c|c|c|}
\hline
$\ell$ & $g$ & $h$ \\
\hline
\phantom{\quad} 11 \phantom{\quad} & \phantom{\quad} 1 \phantom{\quad} & \phantom{\quad} 0 \phantom{\quad} \\
13 & 2 & 5 \\
17 & 5 & 50 \\
19 & 7 & 150 \\
23 & 12 & 500 \\
29 & 22 & 1800 \\
31 & 26 & 2500 \\
\hline
\end{tabular}
\end{center}

\bigskip

While this is rather harmless for $\ell \leqslant 17$, it makes the Dokchitser's method intractable as soon as $\ell \geqslant 29$. It is thus necessary to reduce this polynomial, that is to say to compute another polynomial whose splitting field is isomorphic to the splitting field of $F(x)$ but whose coefficients are much nicer. An algorithm to perform this task based on LLL lattice reduction is described in \cite[section 4.4.2]{GTM138} and implemented in \cite{gp} under the name \verb?polred?. Its complexity is polynomial in the degree and the height of the coefficients, provided that the factorisation of the discriminant of the corresponding field is know, which is the case for us. However, the polynomial $F(x)$ has degree $\ell^2-1$ and tends to have really large coefficients, and this makes \verb?polred? choke on it, even for small values of $\ell$. Indeed, the fact that \verb?polred? is based on LLL reduction means that its execution time is especially sensitive to the degree of the polynomial.

On the other hand, it would be amenable to apply the \verb?polred? algorithm to the polynomial
\[ \saint F^{\text{proj}}(x) = \prod_{W \in \P(V_{f,\l})} \left(x- \sum_{\substack{w \in W \\ w \neq 0}} \alpha(w) \right) \in \Q[x] \]
whose splitting field is\footnote{To be precise, it is clear that the splitting field of $\saint F^{\text{proj}}(x)$ is contained in the number field $\saint L^{\text{proj}}$ cut out by the projective representation. Very often, this containment is an equality and so $\saint F^{\text{proj}}(x)$ is irreducible, but it may sometimes happen that this containment is proper, in which case $\saint F^{\text{proj}}(x)$ becomes reducible over $\Q$. We can work around this pathological behaviour by replacing the summation over $W$ in the definition of $\saint F^{\text{proj}}(x)$ by another symmetric combination (e.g. a product), or by applying a Tschirnhausen transform. For notational convenience, we will henceforth assume that no such problem is encountered; should this not be the case, the necessary modifications are completely straightforward.} the number field $\saint L^\text{proj}$ cut out by the \emph{projective} Galois representation
\[ \saint \rho^\text{proj}_{f,\l} \colon \xymatrix{ \GQ \ar[r]^{\saint \rho_{f,\l}} & \GL_2(\Fl) \ar@{->>}[r] & \PGL_2(\Fl) } \]
since the degree of this polynomial is only $\ell+1$. Unfortunately, this projective version of the representation does not contain enough information to recover\footnote{One could at most recover these values with a sign ambiguity, as in \cite{CE}.} the values of $a_p \bmod \l$.

However, we noted in \cite[section 3.7.2]{Moi} that if $S \subset \Flx$ denotes the largest subgroup of $\F_\ell^*$ such that $S \not \ni -1$, then the knowledge of the quotient representation 
\[ \saint \rho^S_{f,\l} \colon \xymatrix{ \GQ \ar[r]^{\saint \rho_{f,\l}} & \GL_2(\Fl) \ar@{->>}[r] & \GL_2(\Fl)/S }, \]
combined with the fact that the image in $\GL_2(\Fl)$ of a Frobenius element at $p$ has determinant $p^{k-1} \varepsilon(p) \bmod \l$, is enough to recover $\saint \rho_{f,\l}$ and hence the values of $a_p$ mod~$\l$. It is therefore enough for our purpose to compute this quotient representation, first by forming the polynomial
\[ \saint F^S(x) = \prod_{\substack{Sv \in V_{f,\l}/S \\ v \neq 0}} \left(x- \sum_{s \in S} \alpha(sv) \right) \in \Q[x], \]
whose splitting field is the number field $\saint L^S$ cut out by $\saint \rho^S_{f,\l}$, and then by applying the Dokchitsers' method on it in order to compute the images of the Frobenius elements by $\saint \rho^S_{f,\l}$, cf. \cite[section 3.7.2]{Moi}.

Note that since we assumed that $f$ and $\l$ are such that $\saint \rho_{f,\l}$ is not exceptional\footnote{In the sense that its image contains $\SL_2(\Fl)$.}, the quotient representations $\saint \rho^{S}_{f,\l}$ is surjective. Indeed, since $f$ is a form of level $N=1$ and of even weight, the determinant of $\rho_{f,\l}$ is an odd power of the mod $\ell$ cyclotomic character. In particular, the polynomial $\saint F^S(x)$ is irreducible over $\Q$.

Also note that the complex roots of $\saint F(x)$ are approximately known as an output of the algorithm \cite{Moi}, and so is their indexation by $V_{f,\l} - \{0\}$. We thus have an indexation of the roots of $F(x)$ by $V_{f,\l}-\{0\}$, and so we can compute an approximation $F^S(x) \in \Q[x]$ of $\saint F^S(x)$ by grouping the roots, expanding over $\C$, and guessing the coefficients by continued fractions just like for $F(x)$.

In practice, the coefficients of $F^S(x)$ have roughly the same denominator as the ones of $F(x)$, so we are not improving anything on this side, but of course the degree of $F^S(x)$ can be much smaller, so we may try to \verb?polred? it. Let $\ell-1 = 2^r s$ with $s \in \N$ odd. Since we have $\vert S \vert = s$, the degree of $F^S$ is $2^r (\ell+1)$, so  \verb?polred?ing is amenable in the cases $\ell =19$ or $23$, but the cases $\ell = 29$ or $31$ remain impractical.

For these remaining cases, Bill Allombert suggested to the author that one can still reduce $F^S(x)$ in several steps, as we now explain. Since $\Flx$ is cyclic, we have a filtration
\[ \Flx = S_0 \underset{2}{\supset} S_1 \underset{2}{\supset} \cdots \underset{2}{\supset} S_r = S \]
with $[S_{i} : S_{i+1}] = 2$ for all $i$, namely
\[ S_i = \{ x^{2^i}, x \in \Flx \}. \]
For each $i \leqslant r$, let us define
\[ \saint F_i(x) = \prod_{\substack{S_i v \in V_{f,\l}/S_i \\ v \neq 0}} \left(x- \sum_{s \in S_i} \alpha(sv) \right) \in \Q[x], \]
let $F_i(x) \in \Q[x]$ be guesses for $\saint F_i(x)$ obtained as for $F^S(x)$ above, let
\[ \saint K_i = \Q[x] / \saint F_i(x), \qquad K_i = \Q[x] / F_i(x), \]
and let $\saint L_i$ (resp. $L_i)$ be the normal closure of $\saint K_i$ (resp. $K_i$), so that $\saint L_i$ the number field cut out by the quotient representation
\[ \saint \rho^{S_i}_{f,\l} \colon \xymatrix{ \GQ \ar[r]^{\saint \rho_{f,\l}} & \GL_2(\Fl) \ar@{->>}[r] & \GL_2(\Fl)/S_i }. \]
In particular, we have $\saint \rho_{f,\l}^{S_0} = \saint \rho_{f,\l}^\text{proj}$, $\saint L_0 = \saint L^\text{proj}$, and we are looking for a nice model of $K_r$.

Note that again because $f$ is of level $N=1$, and is not exceptional mod $\l$, the polynomials $\saint F_i(x)$ are irreducible over $\Q$, and so $\saint K_i$ is indeed a field. We assume that the $F_i(x)$ are also irreducible.

\bigskip

By construction, the degree of $\saint K_i$ over $\Q$ is $\# \big( (V_{f,\l}-\{0\})/S_i \big) = 2^i (\ell+1)$, so the fields $\saint K_i$ fit in an extension tower
\[ \xymatrix{\saint K_r \ar@{-}[d]^2 \ar@{-}@/^3pc/[ddd]^{2^r} \\ \vdots \ar@{-}[d] ^2 \\ \saint K_1 \ar@{-}[d]^2 \\ \saint K_0 \ar@{-}[d]^{\ell+1} \\ \Q } \]
and we are going to \verb?polred? the polynomials $F_i(x)$ along this tower recursively from bottom up.

First, we apply directly the \verb?polred? algorithm to $F_0(x)=F^\text{proj}(x)$. Since the degree of this polynomial is only $\ell+1$, this is amenable, as mentioned above, and yields a monic reduced polynomial in $\Z[x]$.

Then, assuming we have managed to reduce $F_i(x)$, we have a nice model for $K_i = \Q[x] / F_i(x)$, and so we can factor $F_{i+1}(x)$ over $K_i$. Since the extension \linebreak $K_{i+1}=\Q[x]/F_{i+1}(x)$ should be quadratic over $K_i$, there must be at least one factor of degree $2$. Let $G_{i+1}(x)$ be one of those, and let $\Delta_i \in K_i$ be its discriminant, so that we have
\[ K_{i+1}\simeq K_i[x]/G_{i+1}(x) \simeq K_i \big(\sqrt{\Delta_i}\big). \]
In order to complete the recursion, all we have to do is to strip $\Delta_i$ from the largest square factor we can find, say $\Delta_i = A_i^2 \delta_i$ with $A_i, \delta_i \in K_i$ and $\delta_i$ as small as possible. Indeed we then have $K_{i+1}=K_i \big(\sqrt{\delta_i}\big)$, and actually even $K_{i+1}=\Q\big(\sqrt{\delta_i}\big)$ unless we are very unlucky\footnote{In practice, the case $K_{i+1} \supsetneq \Q\big(\sqrt{\delta_i}\big)$ has never happened to us. Should it happen, it can be corrected by multiplying $\delta_i$ by the square of an (hopefully small) element in $K_i$.}, so that if we denote by $\chi_i(x) \in \Q[x]$ the minimal polynomial of $\delta_i$, then we have
\[ K_{i+1} \simeq \Q[x] / \chi_i(x^2), \]
so that $\chi_i(x^2)$ is a reduced version of $F_{i+1}(x)$. If its degree and coefficients are not too big, we can even apply the \verb?polred? algorithm to this polynomial in order to further reduce it, which is what we do in practice.

In order to write $\Delta_i = A_i^2 \delta_i$, we would like to factor $\Delta_i$ in $K_i$, but even if $K_i$ is principal, this is not amenable whatsoever because $\Delta_i$ is huge. We can however consider the ideal generated by $\Delta_i$ in $K_i$, and remove its $\ell N$-part. The fractional ideal $\mathfrak{B}_i$ we obtain must then be a perfect square, since $K_{i+1}$ is unramified outside $\ell N$ (since $L$ is), and the very efficient \verb?idealsqrt? script from \cite{idealsqrt} can explicitly factor it into $\mathfrak{B}_i = \mathfrak{A}_i^2$. If $A_i$ denotes an element in $\mathfrak{A}_i$ close to being a generator of $\mathfrak{A}_i$ (an actual generator, if amenable, would be even better), then $\delta_i := \Delta_i / A_i^2$ is small.

\bigskip

We have thus managed to reduce our polynomials $F_i(x)$. In what follows, we will use the notation $F_i(x)$ to refer to the reduced versions, which are monic and lie in $\Z[x]$. They were each obtained from the non-reduced version by an explicit change of variable, and we can apply the same changes of variables to the ``true'' polynomials $\saint F_i(x)$, thus yielding new polynomials that we will denote by $\saint F_i(x)$ from now on.

\section{Certification of the computations}\label{Sect_certif}

The output of the algorithm relies on the identification as rational numbers of the coefficients of the polynomials $F_i(x)$ given in approximate form as floating-point numbers, by using continued fractions. In order to certify these results, it is thus necessary to make sure the we have correctly identified not only that the number fields cut out by the representation (i.e. that $K_i = \saint K_i$), but also the Galois action on the roots of the $F_i(x)$, else we would be doing nonsense with the Dokchitsers' resolvents $\Gamma_C(x)$.

For this, a first possibility consists in proving bounds on the height of the rational numbers that the algorithm will have to identify (e.g. the coefficients of $\saint F(x)$), and then to certify that the continued fraction identification process is correct, for instance by running the computation with high enough precision in $\C$ and controlling the round-off errors all along. Although it is indeed possible in theory to bound the height of these rational numbers by using Arakelov theory (cf. \cite[theorem 11.7.6]{CE}), this approach gives unrealistic titanic bounds and thus seems ominously tedious, especially as it requires controlling the round-off error in the linear algebra steps of K. Khuri-Makdisi's algorithms to compute in the modular Jacobian (cf. \cite[section 3.3]{Moi}). We have therefore not attempted to follow it.

Instead, we deemed it much better to first run the computations in order to obtain unproven results, and to prove these results afterwards. We explain in this section how to do so.

\subsection{Sanity checks}

Before attempting to prove the results, it is comforting to perform a few easy checks so as to ensure that they seem correct beyond reasonable doubt (cf. the end of section 1 in \cite{Moi}). Namely,

\begin{itemize}

\item Since we are working with a form of level $N=1$, the number field $\saint L$ cut out by the Galois representation $\saint \rho_{f,\l}$ is ramified only at $\ell$. Therefore, we can check that the discriminant of the polynomial $F(x) \in \Q[x]$ is of the form
\[ \pm \ell^n M^2 \]
for some $M \in \Q^*$. Even better, we can compute the maximal order of the field $K = \Q[x] / F(x)$ whose Galois closure is $L$ and check that its discriminant is, up to sign, a power of $\ell$. Since a number field ramifies at the same primes as its Galois closure, this proves that the decomposition field $L$ of $F(x)$ is ramified only at $\ell$, as expected. If the coefficients of $F(x)$ are too horrible for that, we can apply this check on $F_r(x)$ instead.

\item Since Galois representations attached to modular forms are odd, the image of complex conjugation by these representations is an involutory matrix in $\GL_2(\F_\l)$ of determinant $-1$, hence similar to $\left[ \begin{smallmatrix} 1 & 0 \\ 0 & -1 \end{smallmatrix} \right]$ if $\ell \geqslant 2$. This means that the polynomial $F(x)$ of degree $\ell^2-1$ computed by the algorithm should have exactly $\ell-1$ roots in $\R$, which can be checked numerically, and that the sign of its discriminant should be $(-1)^{\ell (\ell-1)/2}$, which can be checked exactly.

\item The fact that the resolvents $\Gamma_C(x)$ computed by the Dokchitsers' method and used to identify the image of Frobenius elements seem to have integer (and not just complex) coefficients hints that $\Gal(L/\Q)$ is indeed isomorphic to a subgroup of $\GLFl$, so that the number field $L$ is indeed a number field cut out by a Galois representation, and that the Galois action on $V_{f,\l} \subset J_1(\ell)[\ell]$ is linear. Again, we can replace $F(x)$ with $F_r(x)$ and $\GLFl$ with $\GLFl/S_r$ to ease computation.

\item The fact that the approximations $F_i(x)$ of the polynomials $\saint F_i(x)$ computed by regrouping the roots of $F(x)$ along their $S$-orbits for the various subgroups $S \subseteq \Flx$ considered during the polynomial reduction process (cf. section \ref{Polred}) seem to have rational coefficients with common denominator dividing the one of $F(x)$ also hints that the coefficients of these polynomials have been correctly identified as rational numbers, that $\Gal(L/\Q)$ is indeed isomorphic to a subgroup of $\GLFl$, and that the Galois action on the roots of $F(x)$ is the expected one.

\item Finally, we can check that the values $a_p \bmod \mathfrak{l}$ obtained by the algorithm for a few small primes $p$ are correct, by comparing them with the ones computed by ``classical'' methods such as based on modular symbol-based ones. 

\end{itemize}

We will now present a method to formally prove rigorously  our computations, while keeping the amount of required extra computations to a minimum.

\subsection{A certification algorithm}

We keep the notations of section \ref{Polred}: we fix a prime $\ell \geqslant 5$, and we let $r \in \N$ be such that $\ell-1 = 2^r m$  for some odd $m \in \N$, so that we have the filtration
\[ \Flx = S_0 \underset{2}{\supset} S_1 \underset{2}{\supset} \cdots \underset{2}{\supset} S_r = S \]
with $\# S_r$ odd and $[S_{i} : S_{i+1}] = 2$ for all $i$. Let $V = \Fl^2-\{0\}$, the vector plane minus the origin, on which $\GLFl$ acts transitively, and let $V_i = V / S_i$, so that we have a natural transitive action of $\GLFl/S_i$ on $V_i$. We denote by $\xymatrix{ \pi_i \colon V_{i+1} \ar@{->>}[r] & V_i}$ the natural projection, and we note for future reference that each element of $\GLFl/S_i$ has a well-defined trace in $\Fl/S_i$, as well as a well-defined determinant in $\Flx/S_i^2$, where
\[ S_i^2 = \{s^2, s \in S_i \} = \left\{ \begin{array}{ll} S_{i+1}, & \text{ if } i<r, \\ S_i, & \text{ if } i=r. \end{array} \right. \]

For each $0 \leqslant i \leqslant r$, we have constructed a monic, irreducible polynomial $F_i(x)~\in~\Z[x]$ of degree $2^i (\ell+1)$. Let $K_i$ be the root field of $F_i(x)$, let $L_i$ be its Galois closure. We have that $K_{i+1}$ is a quadratic extension of $K_i$, generated by the square root of some explicitly known integral primitive element $\delta_i$ of $K_i$, as this is a by-product of the reduction process presented in section \ref{Polred}. 

For each $i$, let $Z_i \subset \C$ denote the set of complex roots of $F_i(x)$. As noted in section \ref{Polred}, we have an indexation of $Z_i$ by $V_i$, which we denote by $\theta_i \colon Z_i \overset{\sim}{\longrightarrow} V_i$. Via these indexations, the Galois action on the $Z_i$ should be ``linear'', but we do not know that yet.

Besides, by construction of the $F_i(x)$, for each root $z \in Z_{i+1}$ there exists another root $z' \in Z_{i+1}$ such that $z + z'$ is extremely close to a root of $F_i(x)$. We can check numerically that each root of $F_i(x)$ is the sum of two roots of $F_{i+1}(x)$ in a unique way, whence 2-to-1 projections map $\xymatrix{ \varpi_i \colon Z_{i+1} \ar@{->>}[r] & Z_i}$ such that
\[ z \approx \sum_{\substack{z' \in Z_{i+1} \\ \varpi_i(z')=z}} z' \]
for all $z \in Z_i$. We can check that these approximate identities are in fact exact, i.e.
\begin{equation}
z = \sum_{\substack{z' \in Z_{i+1} \\ \varpi_i(z')=z}} z', \label{rac_trace} \tag{T}
\end{equation}
by computing rigorously\footnote{Here and in what follows, by \emph{rigorously} we mean by the use of exact methods such as resultants, as opposed to the expansion of the product over a non-exact field followed by the identification of the coefficients as elements of $\Z$ or $\Q$.} for each $i$ the polynomial
\[ \prod_{I \in \binom{Z_i}{2}} \left (x-\sum_{z \in I} z \right) \in \Z[x], \]
where $\binom{Z_i}{2}$ denotes the set of $2$-element subsets of $Z_i$, and by checking that $F_i(x)$ divides this polynomial and that the complex roots match as expected. We can then also check numerically that the diagram
\begin{equation}
\begin{gathered}
\xymatrix{
Z_{i+1} \ar@{->>}[d]_{\varpi_i} \ar[r]^{\sim}_{\theta_{i+1}} & V_{i+1} \ar@{->>}[d]^{\pi_i} \\ Z_i \ar[r]^{\sim}_{\theta_i} & V_i
}
\end{gathered} \label{diag_Pi} \tag{$\Pi$} \end{equation}
commutes for each $i$, as expected. This proves that the projections $\pi_i$ are Galois-equivariant. 

What we want to prove is that there exists a compatible\footnote{Here and in what follows, by \emph{compatible} we mean compatible with the natural projections from objects at level $i+1$ to objects at level $i$.} system of isomorphisms between $\Gal(L_i/\Q)$ and $\GLFl/S_i$, such that the Galois action on $Z_i$ is equivalent via our bijections $\theta_i$ to the natural action of $\GLFl/S_i$ on $V_i$, so that the diagram
\[ \xymatrix @R=5pc @C=3pc{
    \Gal(L_i/\Q) \ar[rr] \ar[dd] && \Sym(Z_{i}) \ar[dd] |!{[dl];[dr]}\hole \\
    & \Gal(L_{i+1}/\Q) \ar@{->>}[ul] \ar[rr] \ar[dd] && \Sym(Z_{i+1},\varpi_i) \ar[dd] \ar[ul] \\
    \GLFl/S_i \ar[rr] |!{[ur];[dr]}\hole && \Sym(V_i) \\
    & \GLFl/{S_{i+1}} \ar@{->>}[ul] \ar[rr] && \Sym(V_{i+1},\pi_i) \ar[ul] \\
} \]
commutes for all $i$, where the vertical arrows are isomorphisms, $\Sym(V_{i+1},\pi_i)$ denotes the group of permutations of $V_{i+1}$ that admit the fibres of the projection $\pi_i$ as a block system, and similarly for $\Sym(Z_{i+1},\varpi_i)$.

Furthermore, we also want to prove that for all $i$, the Galois action on $Z_i$ affords a quotient Galois representation $\rho_{f,\l}^{S_i}$ which is equivalent to $\saint \rho_{f,\l}^{S_i}$. For brevity, we will then say that the polynomials $F_i(x)$ \emph{correspond} to $\saint \rho_{f,\l}$.

\bigskip

We will present two methods to rigorously prove that our polynomials $F_i(x)$ correspond to a Galois representation $\saint \rho$, the second one being more efficient but unfortunately much more complicated then the first one. We will then finally show how to prove that $\saint \rho \sim \saint \rho_{f,\l}$.

Both methods require that we first check that $F_0(x)$ indeed corresponds to $\saint \rho_{f,\l}^{\text{proj}}$, so we start by showing how this can be done.

\subsection{Certification of the projective representation}

\subsubsection{Certification of the Galois group of $F_0(x)$}\label{GalProj}

We thus begin with the polynomial $F_0(x)$, which ought to correspond to the projective Galois representation $\saint \rho_{f,\l}^{\text{proj}}$. The first thing to do is to make sure that this polynomial does define a projective Galois representation, by proving that there exists an indexation of $Z_0$ by $\P^1(\Fl)$ such that $\Gal(L_0/\Q)$ is permutation isomorphic to a subgroup of $\PGLFl$ acting of $\P^1(\Fl)$. Since by assumption $f$ has level $N=1$ and is not exceptional mod $\l$, we actually expect $\Gal(L_0/\Q)$ to be isomorphic to the whole of $\PGL_2(\Fl)$.

In principle, we could prove this by computing the polynomial
\[ R_4(x) = \prod_{\substack{z_1,z_2,z_3,z_4 \in Z_0 \\ \text{pairwise distinct}}} \left(x-\sum_{n=1}^4 \lambda_n z_n \right) \in \Z[x] \]
by rigorous methods (e.g. resultants), and by checking how it factors over $\Q$. Here, the $\lambda_n$ are fixed integers chosen so that $R_4(x)$ is squarefree, so that $R_4(x)$ monitors the action of Galois on quadruplets of roots of $F_0(x)$. The point is that a permutation of $\P^1\Fl$ comes from $\PGLFl$ if and only if it preserves cross-ratios, and this should become apparent in the factorisation of $R_4(x)$.

However, the degree of $R_4(x)$ is approximately $\ell^4$, which is quite large for $\ell=31$, not to mention that since the parameters $\lambda_n$ must necessarily be distinct, the coefficients of $R_4(x)$ will be huge. As a result, computing $R_4(x)$ would be too slow in practice.

We can instead compute the polynomial \[ R_{4,\text{sym}}(x) = \prod_{I \in \binom{Z_0}{4}}\left( x - \sum_{z \in I} z \right) \in \Z[x], \]
where $\binom{Z_0}{4}$ denotes the set of sets of roots of $F_0(x)$ of cardinal $4$. This polynomial monitors the action of  Galois on \emph{unordered} quadruplets of roots of $F_0(x)$, and compared to $R_4(x)$, its degree is $24$ times smaller, and its coefficients are much smaller, so that computing it is much more amenable. It turns out that the way $R_{4,\text{sym}}(x)$ factors is enough to indicate that $\Gal(L_0/\Q)$ is a subgroup of $\PGLFl$ in most cases.

To make this claim more precise, let us fix some notation: we let $k$ be a field\footnote{We have $k = \F_\ell$ in mind, but we would like to make general statements.} of characteristic different from $2$, and $H$ to be the so-called \emph{anharmonic group}, that is to say the group of permutations of $\P^1(k)$ generated by $\lambda \mapsto 1-\lambda$ and $\lambda \mapsto 1/\lambda$. It is well-known that $H \simeq \mathfrak{S}_3$ is the stabiliser of the set $\{\infty,0,1 \}$ for the action of $\PGL_2(k)$ on $\P^1(k)$, and that if $(a,b,c,d) \in \P^1(k)^4$ is a quadruplet of pairwise distinct points, then the cross-ratios of all possible 24 permutations of this quadruplet forms an orbit under $H$. Besides, since the fibres of the map
\[ \begin{array}{ccc} j \colon \P^1(k)\setminus\{\infty,0,1\} & \longrightarrow & k \\
\lambda & \longmapsto & 256  \frac{(1-\lambda+\lambda^2)^3}{\lambda^2 (1-\lambda)^2} \end{array} \]
are precisely the $H$-orbits\footnote{This is because $j(\lambda)$ is the $j$-invariant of the Legendre curve $y^2=x(x-1)(x-\lambda)$, so that the map $j$ we define is the projection from the modular curve $X(2)$ (identified to the $\lambda$-line via Legendre curves) to $X(1)$ (identified to the $j$-line), and because $H$ is the Galois group of the covering  $X(2) \longrightarrow X(1)$ under these identifications. The author thanks S. Siksek for bringing this to his attention.}, the composition the cross-ratio with $j$ yields a well-defined ``unordered cross-ratio" map $u \colon \binom{\P^1(k)}{4} \longrightarrow  k$.

\begin{thm}
If $\ell \neq 5$, then the permutations of $\P^1(\Fl)$ that preserve the unordered cross-ratio map $u$ are precisely the ones that come from $\PGLFl$.
\end{thm}

\begin{proof}
If $\ell \leqslant 3$, then every permutation of $\P^1(\Fl)$ comes from $\PGLFl$ and so there is nothing to prove. We may therefore assume that $\ell \geqslant 7$. But then the anharmonic group $H$ does not act transitively on $\P^1(\Fl)\setminus \{\infty,0,1\}$, and so $u \colon \binom{\P^1(k)}{4} \longrightarrow  k$ is not a constant map. As a result, its stabiliser in $\mathfrak{S}_{\ell+1}$ is a strict subgroup $S < \mathfrak{S}_{\ell+1}$ which clearly contains $\PGLFl$. But $\PGLFl$ is a maximal subgroup of $\mathfrak{S}_{\ell+1}$ according to the following theorem, whence the result.
\end{proof}

\begin{thm}
Let $\ell \in \N$ be a prime. The permutation group $\PGLFl$ of $\P^1(\F_\ell)$ is a maximal subgroup of the symmetric group $\mathfrak{S}_{\ell+1}$.
\end{thm}

\begin{proof}
We may assume that $\ell \neq 2$. Suppose that there is a group $X$ such that $\PGLFl~<~X~<~\mathfrak{S}_{\ell+1}$.
Then $X$ is at least 3-transitive. By looking through the list of 2-transitive finite permutation groups given in section 7.7 of \cite{DixonMortimer}, it can be derived that the $3$-transitive finite permutation groups are the following:
\begin{itemize}
\item the projective semilinear groups $G$ with $\PSL_2(\F_q) \leqslant G \leqslant \operatorname{P \Gamma L}_2(\F_q)$, where $q$ is a power of a prime $p$ and $G \not \leqslant \operatorname{P \Sigma L}_2(\F_q)$ if $p \neq 2$, degree $q+1$,
\item the affine groups $\operatorname{AGL}_n(\F_2) = \F_2^n \rtimes \GL_n(\F_2)$, degree $2^n$,
\item the group $\F_2^4 \rtimes \mathfrak{A_7}$, degree 16,
\item the Mathieu groups $M_{11}$, $M_{12}$, $M_{22}$, $\Aut(M_{22})$, $M_{23}$ and $M_{24}$, respective degrees $11$ or $12$, $12$, $22$, $22$, $23$, $24$,
\item the alternating groups $\mathfrak{A}_n$ ($n \geqslant 5$), degree $n$,
\item and the symmetric groups $\mathfrak{S}_n$  ($n \geqslant 3$), degree $n$,
\end{itemize}
where $\operatorname{P \Gamma L}_2(k)$ (resp. $\operatorname{P \Sigma L}_2(k)$) denotes the permutation group of $\P^1(k)$ generated by $\PGL_2(k)$ (resp. $\PSL_2(k)$) and by the automorphisms of the ground field $k$.

As we want degree $\ell+1$ with $\ell$ prime, this only leaves $\operatorname{AGL}_n(\F_2)$, $M_{11}$, $M_{12}$, $M_{24}$ and $\mathfrak{A}_{\ell+1}$ as candidates for $X$. However, these groups are all perfect, so they all act by even permutations and thus cannot contain $\PGLFl$.
\end{proof}

As far as we are concerned, the main consequence of this is that it is enough to see how $R_{4,\text{sym}}(x)$ factors to prove that $\Gal(L_0/\Q)$ is permutation isomorphic to a subgroup of $\PGLFl$, and this is a stark improvement compared to working with $R_4(x)$, whose degree is 24 times larger.

\bigskip

The computer algebra package \cite{Magma} contains two functions named respectively \verb?GaloisGroup? and \verb?GaloisProof? whose aim is to compute Galois groups by the algorithm described in \cite{Kluners}. The former, when supplied with an irreducible polynomial $f(x) \in \Z[x]$ and a prime number $v \in \N$, tries to guess the Galois group of $f(x)$ as a permutation group acting on the $v$-adic roots of $f(x)$, albeit non-rigorously; in order to get a certification of this result, it is necessary to then apply the latter function.

In our case, if we pick a prime $p \in \N$ such that $F_0(x)$ is irreducible\footnote{Such a prime should exist and should not be too hard to come by, as a non-negligible proportion $\left( \frac{\varphi(\ell+1)}{2(\ell+1)}, \text{ to be precise}\right)$ of elements of $\PGLFl$ act as $(\ell+1)$-cycles on $\P^1(\Fl)$.} mod $p$, then when we call \verb?GaloisGroup? on $\big(F_0(x),p)$, it only takes a few seconds (even for $\ell=31$) for \cite{Magma} to return a guess for $\Gal(L_0/\Q)$, thanks to the efficiency of \cite{Kluners}, the fact that $F_0(x)$ is of degree only $\ell+1$ and has been \verb?polred?ed, and to the non-trivial information provided by the cyclic action of the Frobenius at $p$ on the $p$-adic roots of $F_0(x)$. However, as explained above, this is not rigorous, so we then call \verb?GaloisProof?, which forces \cite{Magma} to compute and factor $R_{4,\text{sym}}(x)$ rigorously so as to verify the output of \verb?GaloisGroup?. This takes of course much longer (up to $4$ days for $\ell=31$), and in fact this is by far the most time-consuming part of the whole certification process of our polynomials, at least for large $\ell$.

We then check explicitly that this Galois group is permutation-isomorphic to $\PGL_2(\Fl)$ acting on $\P^1(\Fl)$. We fix such an isomorphism\footnote{Note that as every automorphism of $\PGLFl$ is interior, our isomorphism must be the ``right'' one.}, and we will use it to identify $\Gal(L_0/\Q)$ with $\PGL_2(\Fl)$ from now on. This yields a bijection $\theta_0$ between the roots of $F_0(x)$ in $\overline{\Q_p}$ and $\P^1(\Fl)$ which makes the Galois action equivalent to the natural action of $\PGL_2(\Fl)$ on $\P^1(\Fl)$.

\subsubsection{Correctness of the projective representation}

Now that we have made sure that the Galois action on the roots of $F_0(x)$ does define a projective representation
\[ \xymatrix{\rho^{\text{proj}} \colon G_\Q \ar@{->>}[r] & \Gal(L_0/\Q) \ar@{=}[r] & \PGL_2(\Fl)}, \]
we want to prove that this representation is isomorphic to $\saint \rho^{\text{proj}}_{f,\l}$ as expected. For this, we use the following result from \cite[section 2]{Bos}:

\begin{thm}
Let $\saint \pi \colon G_\Q \longrightarrow \PGL_2(\Fl)$ be an irreducible projective mod $\ell$ Galois representation, where $\ell \geqslant 3$. Let $H < \PGL_2(\Fl)$ be the stabiliser of a point of $\P^1(\Fl)$, and let $K = \overline{\Q}^{\saint \pi^{-1}(H)}$ be the corresponding number field. If $K$ has exactly two real places, and if there exists an integer $k \geqslant 3$ such that
\[ \disc K = \pm \ell^{k+\ell-2}, \]
then there exists a newform $f \in S_k(1)$ and a prime $\l$ of $\overline \Q$ above $\ell$ such that
\[ \saint \pi \sim \saint \rho^{\text{proj}}_{f,\l}. \] 
\end{thm}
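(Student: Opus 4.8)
The plan is to leverage Serre's modularity conjecture (proved by Khare--Wintenberger), so the genuine content is in checking its hypotheses and in the numerology of the conductor. First I would lift the projective representation $\pi$ to a linear representation $\tilde\pi\colon G_\Q \to \GL_2(\Fl)$; this is always possible after enlarging the residue field, by a theorem of Tate (vanishing of $H^2(G_\Q,\overline{\F}_\ell^*)$), and by twisting by a suitable character one can arrange $\tilde\pi$ to be odd and of small conductor. The ramification of $\tilde\pi$ outside $\ell$ is controlled by the ramification of $\pi$, which by hypothesis is unramified outside $\ell$ (that is what $\disc K = \pm\ell^{k+\ell-2}$ forces, since $K$ ramifies at exactly the primes at which $L$ does); the non-totally-real hypothesis on $L$ guarantees that $\pi$, hence some twist of $\tilde\pi$, is odd. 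Applying Serre's conjecture to $\tilde\pi$ then yields a newform $g$ of some level $N'$ and weight $k'$ with $\tilde\pi \sim \rho_{g,\l'}$, whence $\pi \sim \rho_{g,\l'}^{\proj}$.

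The remaining work is to pin down $N' = 1$ and to extract the precise weight from the discriminant formula. The level $N'$ is the prime-to-$\ell$ Artin conductor of $\tilde\pi$; since $\pi$ is unramified outside $\ell$, the only obstruction to $N' = 1$ is the ramification introduced by the twisting character used to make $\tilde\pi$ odd, so I would choose that character to be a power of the mod-$\ell$ cyclotomic character (ramified only at $\ell$), which is possible precisely because $\pi$ arises geometrically and the obstruction class can be killed using a cyclotomic twist. This gives $N'=1$. For the weight, I would compare the conductor--discriminant formula for $K/\Q$ with Serre's recipe for the weight $k(\tilde\pi)$ attached to the local behaviour of $\tilde\pi$ at $\ell$: the exponent $k+\ell-2$ in $\disc K$ is exactly what one gets from a representation whose Serre weight is $k$ (the $\ell-2$ being the generic tame contribution to the different of $K$, and the shift by $k$ encoding the Serre weight), so one reads off $k' = k$, or at worst $k'$ congruent to $k$ modulo $\ell-1$, which can be normalised back to $k$ by a further cyclotomic twist without changing $N'$.

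The main obstacle, and the step I would expect to consume the real effort, is the precise conductor bookkeeping at $\ell$: translating the single integer exponent $k+\ell-2$ appearing in $\disc K$ into a statement about the Serre weight of the lift $\tilde\pi$ requires carefully tracking how the local representation $\pi|_{I_\ell}$ (tame, wildly ramified, or reducible) contributes to the valuation of the different of $K = \overline{\Q}^{\pi^{-1}(H)}$, where $H$ is a point stabiliser, and then matching this against the local conditions in Serre's weight conjecture after lifting. One must also verify that the finitely many ambiguous cases --- where the conductor exponent could a priori come from a weight differing from $k$ by a multiple of $\ell-1$, or from a nontrivial prime-to-$\ell$ part masked by the lift --- do not actually occur under the stated hypothesis; this is where the hypothesis that $L$ be \emph{not} totally real is used a second time, to exclude the even (and hence non-modular, or modular of the wrong sort) possibilities. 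Everything else --- the existence of the lift, the application of Khare--Wintenberger, the passage back to the projective representation --- is by now standard, so I would cite \cite{Bos} for the detailed conductor computation and present only the outline above.
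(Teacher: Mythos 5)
Your proposal follows essentially the same route as the paper's proof: lift $\pi$ to a linear representation ramified only at $\ell$, deduce oddness from the hypothesis that $L$ is not totally real, invoke Serre's conjecture (Khare--Wintenberger) to get modularity, conclude the level is $1$ because the Artin conductor is a power of $\ell$, and pin down the weight by matching the exponent in $\disc K$ against the discriminant formula for the minimal-weight lift (Moon--Taguchi, made explicit in Bosman's work, which the paper also delegates to). One small correction: twisting by a character can never change the parity, since $\chi(c)^2=1$ at complex conjugation $c$; oddness is automatic here because any lift sends $c$ to an involution whose image in $\PGL_2(\Fl)$ is nontrivial, hence is conjugate to $\left[\begin{smallmatrix} 1 & 0 \\ 0 & -1\end{smallmatrix}\right]$ and has determinant $-1$.
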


\begin{proof}[Sketch of proof]
By assumption, the image of complex conjugation by $\saint \pi$ is a non-trivial matrix which is diagonalisable over $\Fl$, and so $\saint \pi$ is absolutely irreducible.
 The idea is then that $\saint \pi$ can be lifted to a linear representation
\[ \saint \rho \colon G_\Q \longrightarrow \GL_2(\overline{\Fl}) \]
which, just like $\saint \pi$, is absolutely irreducible, odd, and ramifies only at $\ell$. Serre's modularity conjecture (cf. \cite{KW}) then applies and shows that $\saint \rho$ is modular, say $\saint \rho \sim \saint \rho_{f,\l}$ for some newform $f \in S_{k_{\saint \rho}}(N_{\saint \rho},\varepsilon_{\saint \rho})$ and some prime $\l$ of $\overline \Q$ above $\ell$. Then, since $\saint \rho$ ramifies only at $\ell$, its Artin conductor is a power of $\ell$, so $\saint \rho$ comes from a form $f$ of level $N_{\saint \rho}=1$. Finally, if the lift $\saint \rho$ is chosen so that the weight $k_{\saint \rho}$ of $f$ is minimal, then \cite[theorem 3]{MoonTaguchi} gives a formula for the $\ell$-adic valuation of the discriminant of the Galois number field cut out by $\saint \rho$, which by J. Bosman's work boils down to
\[ \disc K = \pm \ell^{k_{\saint \rho}+\ell-2}. \]
\end{proof}

Thus, in order to prove that $\rho^{\text{proj}} \sim \saint \rho^{\text{proj}}_{f,\l}$, all we have to do is count the real roots of $F_0(x)$, which can be done by using Sturm's method (cf. \cite[chapter XI, theorem 2.7]{LangAlgebra}), and check that the discriminant of the root field $K^{\text{proj}} = \Q[x]/F_0(x)$ is $\pm \ell^{k+\ell-2}$, which is a piece of cake for \cite{gp}. If $k$ is such that $\dim S_k(1)=1$, e.g. $k \leqslant 22$, then this is enough to conclude that $\rho^{\text{proj}} \sim \saint \rho^{\text{proj}}_{f,\l}$, as the coefficients of $f$ then are rational so that the choice of the prime $\l$ lying above $\ell$ does not matter.

In the case $\dim S_k(1)>1$, we can check that the newforms in $S_k(1)$ are all conjugate under Galois as predicted by Maeda's conjecture, and so we only have to make sure that $\rho^{\text{proj}} \sim \saint \rho^{\text{proj}}_{f,\l}$ for the right prime $\l$ above $\ell$. For instance, in the case $\ell=31,k=24$, we have that $S_{24}(1)$ has dimension $2$ and is spanned by the two conjugates of a newform $f_{24}=\sum_{n \geqslant 1} \tau_{24}(n) q^n$ whose eigenvalues lie in a quadratic field; since $31$ splits in this field, say $31 = \l_1 \l_2$, we know that $\rho^{\text{proj}}$ is equivalent either to $\saint \rho^{\text{proj}}_{f_{24},\l_1}$ or to $\saint \rho^{\text{proj}}_{f_{24},\l_2}$. In order to tell which, we pick a small prime $p \in \N$ such that $F_0(x)$ is squarefree mod $p$ (in particular $p \neq \ell$), and such that $\tau_{24}(p) \equiv 0 \bmod \l_1$ but $\tau_{24}(p) \not \equiv 0 \bmod \l_2$ (the opposite would do too). Since an element of $\PGL_2(\Fl)$ is of order $2$ if and only if it has trace $0$, looking at the factorisation of $F_0 \bmod p$ allows us to tell $\l_1$ and $\l_2$ apart: if $F_0(x) \bmod p$ splits into linear and quadratic factors but does not split completely, then it is associated to $\saint \rho^{\text{proj}}_{f_{24},\l_1}$, else it is associated to $\saint \rho^{\text{proj}}_{f_{24},\l_2}$.

\subsection{Two approaches to the certification of the Galois groups of the $F_i(x)$}

In principle, we could simply ask again \cite{Magma} to determine the Galois group of the $F_i(x)$, as we did above for $F_0(x)$. However, the permutation groups $\GLFl/S_i$ are not characterised as nicely as $\PGL_2(\Fl)$, which can be defined as the group of permutations of $\P^1(\Fl)$ that preserve cross-ratios. As a result, Magma would have to rely on much more involved group-algorithmic methods, which would make the computation much slower\footnote{In fact, the \texttt{GaloisGroup} function can still make the right guess pretty quickly, but this guess must then be proved by calling the \texttt{GaloisProof} function, and this is far too slow because of the degree of the polynomials $F_i(x)$ for $i >0$. For instance, for $\Delta$ mod $19$, it takes \cite{Magma} four days to laboriously manage to certify that the Galois group of $F_1(x)$ is permutation isomorphic to $\GL_2(\F_{19})/\F_{19}^{*2}$. By comparison, the method presented in section \ref{Galois_cohom} below merely takes a few minutes.}. We are going to present methods which require much less computation, and which also yield proofs that are more human-readable.

We are actually going to present two methods to exhibit a permutation isomorphism between the Galois group of $F_r(x)$ and $\GLFl/S_r$ acting naturally on $V_r=V/S_r$. The first one, which we present in section \ref{Galois_geom}, is the more natural one, and is entirely due to the reviewer of an older version of this article; the author wishes to thank him profusely for this. Unfortunately, it leads to computations which, albeit not as slow as a blunt \cite{Magma} attack, still require quite a bit of computation time. The method that we will present in the next section \ref{Galois_cohom} requires much less computation time; unfortunately, it is also much more complicated to explain.

\subsection{The geometric approach}\label{Galois_geom}
 
 The method presented in this section could be used with pretty much any representation $\saint \rho \colon \Gal(\overline \K/\K) \longrightarrow \GLFl$ whose quotients $\saint \rho^{S_i}$ are surjective, where $\K$ can be any field\footnote{We have a number field in mind.} we can perform computations such as polynomial factorisation with.
 
In this section, we thus suppose that we have a collection of irreducible polynomials $F_i(x) \in \K[x]$, $0 \leqslant i \leqslant r$, which ought to correspond to such a Galois representation $\saint \rho$. We also suppose that the $F_i(x)$ split completely in some extension\footnote{We have $\Omega=\C$ or some finite extension of $\Q_p$ in mind.}$\Omega$ of $\K$, and that we have conjectured a compatible system a bijections $(\theta_i)_{0 \leqslant i \leqslant r}$ between the roots of $F_i(x)$ in $\Omega$ and the $V_i$ such that we expect the Galois action on the roots of $F_i(x)$ to be permutation isomorphic to $\GLFl/S_i$ acting naturally on $V_i$, and such that the relations \eqref{diag_Pi} and \eqref{rac_trace} hold between the roots of $F_i(x)$ and those of $F_{i+1}(x)$ for all $i < r$. For each $i$, we identify the set of roots of $F_i(x)$ with $V_i$, and the Galois group of $F_i(x)$ with a permutation group of $V_i$ for each $i$; the projections $\xymatrix{\pi_i \colon V_{i+1} \ar@{->>}[r] & V_i}$ are then Galois-equivariant. Finally, we also assume that we have managed to prove that the Galois group of $F_0(x)$ is indeed permutation isomorphic to $\PGLFl$ via $\theta_0$ by a method similar to the one described in section \ref{GalProj} above. We may thus identify the Galois group of $F_0(x)$ with $\PGLFl$.
 
Our goal is to prove that the Galois group of $F_r(x)$ is contained in $\GLFl/S_r$. The key idea of the method presented in this section is to prove that its action on $V_r$ is ``linear''. However, as the addition of vectors does not descend to a well-defined operation on $V_r$, this is not completely straightforward.

\bigskip

To begin with, the relation \eqref{rac_trace} tells us that the Galois group of $F_r(x)$ is a subgroup of the wreath product $\Sym(\Flx/S_r) \wr \PGLFl$. We first want to prove that it is actually a subgroup of $(\Flx/S_r) \wr \PGLFl$, in other words that the action of Galois commutes with scalar multiplication.

Clearly, it is enough to prove that Galois commutes with the scalar multiplication by a generator $\varepsilon$ of $\Flx/S_r$. To do so, we compute by interpolation a polynomial $\widetilde E(x) \in \Omega[x]$ which, for all $v \in V_r$, maps the root of $F_r(x)$ indexed by $v$ to the root indexed by $\varepsilon \cdot v$. We then try to identify the coefficients of this polynomial as approximations of elements of $\K$, whence a polynomial $E(x) \in \K[x]$. If $E(x)$ indeed approximately maps the root indexed by $v$ to the one indexed by $\varepsilon \cdot v$ for all $v \in V_r$ and if $F_r(x)$ divides $F_r \circ E(x)$, this proves that the Galois action commutes with scalar multiplication on $V_r$.

We expect this approach to succeed since multiplication by $\varepsilon$ indeed defines an automorphism not only of the splitting field but also of the root field $\K[x]/\saint F_r(x)$ of $\saint F_r(x)$. Besides, interpolating over the roots of $F_r(x)$ amounts to solving a linear system whose determinant is the discriminant of $F_r(x)$, so that the coefficients of $E(x)$ should not be too difficult to identify if the ones of $F_r(x)$ are nice. In practice, with our \verb?polred?ed polynomial $F_r(x) \in \Z[x]$, it indeed takes just a few seconds to compute $E(x) \in \Q[x]$ and to check that $F_r \circ E(x) \equiv 0 \bmod F_r(x)$.

\bigskip

We may thus assume henceforth that the Galois group of $F_r(x)$ is contained in $P = (\Flx/S_r) \wr \PGLFl$. Let us consider, for all triples $(L_1,L_2,M) \in \P^1(\Fl)^3$ of pairwise distinct vector lines in $\F_\ell^2$, the map
\[ t_{L_1,L_2,M} \colon L_1 \longrightarrow L_2 \]
that sends a point $x \in L_1$ to the intersection of $L_2$ and of the line through $x$ that is parallel to $M$ (cf. the figure below).

\begin{center}
\begin{tikzpicture}[scale=1.5]
\coordinate [label=right:$L_1$] (L1end) at (4,2);
\coordinate [label=above:$L_2$] (L2end) at (-0.5,2.5);
\coordinate [label=above right:$M$] (Mend) at (2,2.5);
\coordinate [label=above right:$\sslash M$] (Mparend) at (4,2.5);
\draw[name path=L1] (-4,-2) -- (4,2);
\draw[name path=L2] (0.5,-2.5) -- (-0.5,2.5);
\draw[name path=M] (-2,-2.5) -- (2,2.5);
\draw[name path=Mpar,style=dashed] (0,-2.5) -- (Mparend);
\path [name intersections={of=L1 and L2,by=O}];
\node [fill=black,inner sep=1pt,label=170:{$(0,0)$}] at (O) {};
\path [name intersections={of=L1 and Mpar,by=x}];
\node [fill=black,inner sep=1pt,label=-45:$x$] at (x) {};
\path [name intersections={of=L2 and Mpar,by=y}];
\node [fill=black,inner sep=1pt,label=0:$t_{L_1,L_2,M}(x)$] at (y) {};
\end{tikzpicture}
\end{center}

Clearly, for all $S \leqslant \Flx$, this map descends to a map
\[ t_{L_1,L_2,M}^S \colon L_1/S \longrightarrow L_2/S. \]

If we now let $X$ denote the set of triples $(v_1,v_2,M) \in V_r \times V_r \times \P^1(\Fl)$ such that the line $L_1$ spanned by $v_1$, the line $L_2$ spanned by $v_2$, and the line $M$ are pairwise distinct, we can define another map
\[ \Lambda \colon X \longrightarrow \Flx/S_r \]
by sending $(v_1,v_2,M)$ to the unique scalar $\lambda \in \Flx/S_r$ such that $v_2 = \lambda \cdot  t_{L_1,L_2,M}^S(v_1)$. The group $\GLFl/S_r$ acts diagonally on $X$, and it is clear that $\Lambda$ is invariant under this action. Conversely, we have the following:

\begin{lem}
Let $\sigma \in P$. If $\Lambda(\sigma \cdot v_1, \sigma \cdot v_2, \sigma \cdot M) = \Lambda(v_1,v_2,M)$ for all $(v_1,v_2,M) \in X$, then $\sigma \in \GLFl/S_r$.
\end{lem}

\begin{proof}
Let $\sigma \in P$ be such an element, and let $g \in \GLFl/S_r$ have the same image in $\PGLFl$ as $\sigma$. Then $\sigma' = g^{-1} \sigma$ lies in $(\Flx/S_r)^{\P^1(\Fl)}$ and leaves $\Lambda$ invariant, and so in fact lies in the diagonal $\Flx/S_r$. It follows that $\sigma \in \GLFl/S_r$.
\end{proof}

As a result, all we need to do is check that $\Lambda$ is invariant under Galois. This leads us to the resolvent
\[ R(x)=\prod_{(\alpha_1,\alpha_2,\alpha_3) \in Z}\left(x- \sum_{i=1}^3 \lambda_i \alpha_i \right) \in \K[x], \]
where the $\lambda_i \in \Z$ are parameters chosen so that $R(x)$ is squarefree, and $Z$ is the set of triples $(\alpha_1,\alpha_2,\alpha_3)$ with $\alpha_3$ a root $F_0(x)$, $\alpha_1$,$\alpha_2$ roots of $F_r(x)$, and $\alpha_1,\alpha_2$ and $\alpha_3$ corresponding to three distinct roots of $F_0(x)$ under the correspondence \eqref{rac_trace}. If we can compute $R(x)$ rigorously and prove that it factors along the fibres of $\Lambda$, then we have proved that the Galois group of $F_r(x)$ is contained in $\GLFl/S_r$.

Unfortunately, just like the resolvent $R_4(x)$ from section \ref{GalProj}, the resolvent $R(x)$ would take a lot of time to compute in our case. Indeed, its degree is $2^{2r}(\ell^3-\ell)$, and for us this is too much: we have $\ell \leqslant 31$ in mind, but even if we restricted ourselves to the primes $\ell \equiv -1 \bmod 4$ so that $r=1$ so as to get an asymptotic $\deg R(x)=O(\ell^3)$ which is better than the degrees $\deg R_{4}(x)=O(\ell^4)$ of the resolvents considered in section \ref{GalProj}, we would still have
\[ \deg R(x) \gg \deg R_{4,\text{sym}}(x), \]
due to the factor $24$ in $\deg R_{4,\text{sym}}(x) = \binom{\ell+1}{4} \sim \ell^4/24$. In fact, it can easily be checked that $\deg R(x) > \deg R_{4,\text{sym}}(x)$ for all $\ell < 103$, which incidentally illustrates again how useful switching from $R_4(x)$ to $R_{4,\text{sym}}(x)$ was in section \ref{GalProj}.

As certifying the Galois group of $F_0(x)$ thanks to the resolvent $R_{4,\text{sym}}(x)$ already took up to 4 days for $\ell=31$, this is a real problem. For this reason, we introduce another method to certify the Galois group of the $F_i(x)$ in the next section. This other method is much more complicated, but the computation time it requires is almost negligible compared to the time needed to certify the Galois group of $F_0(x)$, at least when $r \leqslant 2$.

\subsection{The group cohomology approach}\label{Galois_cohom}

Just like the method presented in the previous section, the method that we are now going to introduce could be applied to a more general framework than the case of modular Galois representations attached to forms of level $1$. It is not as general as the previous one though, in that it requires working with representations $\saint \rho \colon \Gal(\overline \Q/\Q) \longrightarrow \GLFl$ whose image contains $\SLFl$ and whose determinant is still an odd power of the mod $\ell$ cyclotomic character. For instance, it could be used to certify Galois representation computations attached to newforms of level $\Gamma_0(N)$.

Therefore, in this section we merely suppose that we want to prove that the polynomials $F_i(x) \in \Z[x]$ correspond to such a Galois representation $\saint \rho$. In particular, this implies that $\saint \rho^{S_i}$ surjects to $\GLFl/S_i$ for all $i$. We also suppose that we have a relation of the form \eqref{rac_trace} between the roots of $F_i(x)$ and $F_{i+1}(x)$, that is to say that for all $i<r$, any root of $F_i(x)$ is the sum of precisely two roots of $F_{i+1}(x)$. However, even though we want to prove the existence of a compatible system of indexations of the sets $Z_i$ of roots of $F_i(x)$ by $V_i$ making the Galois action permutation isomorphic to the natural action of $\GLFl/S_i$, this time we do \textbf{not} suppose that we already have a candidate for such a system of indexations. Indeed, we are going to work with $p$-adic roots, whereas our algorithm \cite{Moi} returns a candidate indexation of the \emph{complex} roots of the $F_i(x)$. We therefore let $Z_i$ denote the set of roots of $F_i(x)$ in some large enough extension of $\Q_p$, where $p \in \N$ is some fixed prime. We reserve the letter $p$ for this prime from now on.

\begin{rk}
It could be argued that since Magma's function \verb?GaloisGroup? is so efficient, we could easily find a candidate for such a system of indexation of the $Z_i$ by the $V_i$ if we wanted to. However, we would still have to prove that this indexation is correct, and the method which we are going to present will involve constructing a certified system of indexations from scratch anyway.
\end{rk}

In the last steps of the method presented in this section, it will be necessary to assume that $p$ is such that $F_r(x)$ (and hence all the $F_i(x)$) is irreducible mod $p$, and it will be convenient to further assume that $p$ is rather large, say roughly the size of a machine word. Just as in the projective case, there are plenty of elements of $\GLFl/S_r$ which act as transitive cycles on $V_r$, so such a prime should not be too difficult to come by. We thus henceforth assume that $p$ is such a prime, and that we gave this $p$ as a parameter to \cite{Magma} when we certified that the Galois group of $F_0(x)$ may be identified to $\PGLFl$ as a permutation group of $Z_0$.

Finally, we let as before $K_i$ denote the root field $K_i = \Q[x]/F_i(x)$ of $F_i(x)$, \linebreak and $L_i$ denote its Galois closure, and we suppose that for each $i<R$ we know a primitive integral element $\delta_i \in K_i$ such that $K_{i+1}=K_i(\sqrt\delta_i)$.

The idea of the method which we are going to present is to first see the Galois group of $F_i(x)$ as a group extension of $\PGLFl$ of a certain kind, then to use explicit group cohomology arguments so as to establish a finite list of possibilities for this group, and next to rely on ramification arguments to eliminate all possibilities but one\footnote{This is where we need the hypothesis that $\det \saint \rho$ is a power of the mod $\ell$ cyclotomic character for our approach to have a chance to work.}. This process will rely on an induction on $i$, and will allow us to prove that $\Gal(L_i/\Q) \simeq \GLFl/S_i$ as an \emph{abstract} group. We will then prove, again by induction on $i$, that this isomorphism can be turned into an isomorphism of \emph{permutation} groups, in other words that Galois acts on $Z_i$ in the expected way. Finally, we will use the Frobenius at $p$ to determine explicitly a system of indexation of the $Z_i$ by the $V_i$ corresponding to this isomorphism.

\subsubsection{Certification of the Galois group of $F_i(x)$ as an abstract group}

Let
\[ \Q = \kappa_0 \underset2{\subsetneq} \kappa_1 \underset2{\subsetneq} \cdots \underset2{\subsetneq} \kappa_r \underset{\text{odd}}{\subseteq} \Q(\mu_\ell) \]
be the subfields of the cyclotomic extension $\Q(\mu_\ell)$ such that for all $0 \leqslant i \leqslant r$, $\Gal(\kappa_i/\Q) \simeq \Z/2^i \Z$. Thus for instance $\kappa_1 = \Q(\sqrt{\ell^*})$, where $\ell^* = \left(\frac{-1}{\ell}\right) \ell$.

\pagebreak

Consider the following assertions:

\begin{enumerate}[label=(A\arabic*)]

\item\label{A1} If $C \subseteq L_r$ is a Galois subfield of $L_r$ such that $\Gal(C/\Q) \simeq \Z/2^k\Z$ for some integer $k \leqslant r+1$, then $C$ ramifies only at $\ell$.

\item\label{A2} For each $i<r$, let $\Delta_i(x) \in \Z[x]$ be the monic minimal polynomial of $\delta_i$ over~$\Q$, and let
\[ Q_i(x) = \frac{\Res_y\big( \Delta_i(y), \Delta_i(xy) \big)}{(x-1)^{2^i(\ell+1)}} \in \Z[x]. \]
Then, for each irreducible factor $R(x)$ of $Q_i(x)$ over $\Q$, there exists an integer $j \leqslant i$ such that the field $\Q[x]/R(x)$ does not contain $\kappa_{j+1}$, whereas the algebra $\Q[x]/R(x^2)$ does contain $\kappa_{j+1}$ (as a subalgebra with unit).

\item\label{A3} For each $i<r$, there exists a prime $v \in \N$ such that $F_i(x)$ is squarefree and totally split mod $v$, but $F_{i+1}(x)$ is not.

\end{enumerate}

We do not know yet whether these assertions hold, but, we expect them to:

\begin{enumerate}
\item Since the abelianisation of $\GLFl$ is given by the determinant, if, as expected, the polynomials $F_i(x)$ have Galois group $\GLFl/S_i$ and correspond to a Galois representation whose determinant is a power of the mod $\ell$ cyclotomic character, then the maximal Abelian subextension of $L_r$ will be contained in the cyclotomic extension $\Q(\mu_\ell)$; we therefore expect \ref{A1} to hold. 

Conversely, we note that if \ref{A1} holds, then any 2-cyclic subextension field $L_r$ is contained in $\Q(\mu_{\ell^\infty})$, hence in $\Q(\mu_{\ell})$. Since $\PGL_2(\Fl)$ has a quotient $\PGL_2(\Fl)/\PSL_2(\Fl)$ of order $2$, the fields $L_i \supset L_0$ all have at least one quadratic subfield, which must then be $\kappa_1 = \Q(\sqrt{\ell^*})$, and in particular be unique. We will use this fact repeatedly to prove theorem \ref{thm_preuve} below.

\item  We expect \ref{A2} to hold, but it will make much more sense to explain why after the proof of lemma \ref{lem_Lquad} below, so we postpone the explanation to remark~\ref{Rk_whyA2}. For now, we just note that for any polynomial $P(x)= \prod_{i=1}^{n}(x-\alpha_i)$ such that $P(0) \neq 0$,
\[ \Res_y \big( P(y), P(xy) \big) = (-1)^n P(0) \prod_{i,j} \left( x- \frac{\alpha_i}{\alpha_j} \right), \]
so that
\[ \frac{\Res_y \big( P(y), P(xy) \big)}{(x-1)^n} = (-1)^n P(0) \prod_{i\neq j} \left( x- \frac{\alpha_i}{\alpha_j} \right). \]
Therefore, $Q_i(x)$ is indeed a polynomial.

\item Finally, we also expect \ref{A3} to hold: for each $i$, it suffices to consider a prime at which the Frobenius element is $\left[ \begin{smallmatrix} \varepsilon & 0 \\ 0 & \varepsilon \end{smallmatrix} \right]$ for some $\varepsilon \in S_i - S_{i+1}$. We can thus even predict that at such a prime, while $F_i(x)$ splits in linear factors, $F_{i+1}(x)$ will split in quadratic factors. 
\end{enumerate}

\bigskip

Conversely, in this subsection and the next one, we are going to prove the following result, which thus yields an efficient method to formally prove our computations:

\pagebreak

\begin{thm}\label{thm_preuve}
Assume that the assertions \ref{A1}, \ref{A2} and \ref{A3} hold. In addition, if $\ell$ is such that $r \geqslant 3$, also assume that $\kappa_{i+1} \subset L_i$ for all $2 \leqslant i < r$. Then, for all $i \leqslant r$,
\begin{enumerate}[label=(\roman*)]
\item $\Gal(L_i/\Q)$ is isomorphic to $\GL_2(\Fl)/S_i$, not only abstractly, but also as an extension of $\PGL_2(\Fl)$, and
\item there exists such an isomorphism which makes the Galois action on the roots of $F_i(x)$ equivalent to the natural action of $\GL_2(\Fl)/S_i$ on $V_i$.
\end{enumerate} 
\end{thm}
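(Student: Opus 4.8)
The plan is to argue by induction on $i$. The base case $i=0$ is exactly what the previous two subsections establish: $\Gal(L_0/\Q)\cong\PGLFl$, acting on $Z_0$ the way $\PGLFl$ acts on $\P^1\Fl=V_0$, and affording $\rho^{\text{proj}}_{f,\l}$. Assume the statement at level $i$; I want to deduce it at level $i+1$. The first task is to show $[L_{i+1}:L_i]=2$. Since $K_{i+1}=K_i(\sqrt{\delta_i})$ with $\delta_i\in K_i$, the field $L_{i+1}$ is generated over $L_i$ by the conjugates $\sqrt{\delta_i^\sigma}$, $\sigma\in\Gal(L_i/\Q)$; equivalently, writing $\zeta\mapsto\zeta^*$ for the involution of $Z_{i+1}$ that swaps the two points in each fibre of $\varpi_i$, the tracing-down relation gives $\zeta+\zeta^*\in K_i$ and $(\zeta-\zeta^*)^2\in K_i$, so $L_{i+1}=L_i\big(\{\zeta-\zeta^*:\zeta\in Z_{i+1}\}\big)$ with every $(\zeta-\zeta^*)^2$ already in $L_i$. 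Hence $\Gal(L_{i+1}/L_i)$ sits inside the $\F_2$-vector space $\F_2[V_i]$, on which $\Gal(L_i/\Q)\cong\GLFl/S_i$ acts through the permutation module coming from its action on $V_i$, as a $\GLFl/S_i$-submodule. The resolvents $Q_i(x)$ of assertion \ref{A2} are designed precisely to detect whether the ratios $\delta_i^\sigma/\delta_i^\tau$ are squares, and I would use \ref{A2}, in the form of lemma \ref{lem_Lquad} below, to conclude that this submodule has dimension at most $1$, and assertion \ref{A3} — which exhibits a prime $v$ at which $F_i$ splits completely but $F_{i+1}$ does not, so that $L_{i+1}\neq L_i$ — to conclude that its dimension is exactly $1$. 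Thus $\lvert\Gal(L_{i+1}/\Q)\rvert=2\lvert\GLFl/S_i\rvert=\lvert\GLFl/S_{i+1}\rvert$, and $\Gal(L_{i+1}/\Q)$ is a central $\Z/2$-extension of $\GLFl/S_i$.

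Next I would pin down the permutation action. Since $\GLFl/S_i$ acts transitively on $V_i$, the fixed subspace $\F_2[V_i]^{\GLFl/S_i}$ is the line spanned by the all-ones vector $\mathbf1$; a one-dimensional submodule carries the trivial action, hence is contained in (so equals) that line, and therefore $\Gal(L_{i+1}/L_i)=\langle\mathbf1\rangle$. Concretely, the nontrivial element of $\Gal(L_{i+1}/L_i)$ swaps $\zeta\leftrightarrow\zeta^*$ in every fibre of $\varpi_i$ simultaneously — which is exactly how the kernel $S_i/S_{i+1}$ of $\GLFl/S_{i+1}\twoheadrightarrow\GLFl/S_i$ acts on $V_{i+1}=V/S_{i+1}$ over $V_i$. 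Therefore $\Gal(L_{i+1}/\Q)$ and $\GLFl/S_{i+1}$ are both subgroups of $\Sym(Z_{i+1},\varpi_i)$, of the same order, both surjecting onto $\GLFl/S_i\subset\Sym(V_i)$ and both containing the diagonal involution $\mathbf1$; equivalently, modulo $\langle\mathbf1\rangle$ they are two complements of $\F_2[V_i]/\langle\mathbf1\rangle$ in the semidirect product $\big(\F_2[V_i]/\langle\mathbf1\rangle\big)\rtimes\GLFl/S_i$.

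It then remains to show these two complements are conjugate — which transports the bijection $Z_i\cong V_i$ to an equivariant bijection $Z_{i+1}\cong V_{i+1}$ — and, equivalently, that the central extension $\Gal(L_{i+1}/\Q)$ is isomorphic as an extension to $\GLFl/S_{i+1}$. Here assertion \ref{A1} is the engine: it forces every $2$-cyclic subextension of $L_r$, hence of $L_{i+1}$, to lie in $\Q(\mu_\ell)$, whose only quadratic subfield is $\kappa_1=\Q(\sqrt{\ell^*})$; in particular $\Gal(L_{i+1}/\Q)^{\ab}$ has cyclic $2$-part, which already excludes the split extension $\GLFl/S_i\times\Z/2$, and more generally, matching the $2$-power cyclotomic tower $\kappa_1\subset\kappa_2\subset\cdots$ against the abelian layers of $\Gal(L_{i+1}/\Q)$ pins down the isomorphism type. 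When $r\geqslant 3$ the assumption $\kappa_{i+1}\subset L_i$ (for $2\leqslant i<r$) is what makes this matching go through at the deeper levels, where the relevant $H^1$ classifying complements no longer vanishes on its own. Finally, the representation $\varrho^{S_{i+1}}$ afforded by the action lifts $\varrho^{S_i}\sim\rho^{S_i}_{f,\l}$ along $\GLFl/S_{i+1}\to\GLFl/S_i$ and has the same projectivisation $\rho^{\text{proj}}_{f,\l}$; since its determinant must be the prescribed odd power of the mod $\ell$ cyclotomic character (known because we know the weight $k$), the residual quadratic-character ambiguity of the lift is killed, so $\varrho^{S_{i+1}}\sim\rho^{S_{i+1}}_{f,\l}$, completing the induction.

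The step I expect to be the real obstacle is the last one: $\GLFl/S_i$ genuinely admits several central $\Z/2$-extensions — the tautological one $\GLFl/S_{i+1}$, its twist by the quadratic character $\GLFl/S_i\to\PGLFl/\PSLFl$, and extensions whose structure differs between the generic range $i+1<r$ and the final step $i+1=r$, where $[\GLFl/S_r,\GLFl/S_r]$ jumps from $\PSLFl$ to $\SLFl$ — and correspondingly several conjugacy classes of subgroups of $\Sym(Z_{i+1},\varpi_i)$ realising the wreath picture above. Ruling out the wrong ones is precisely what forces the hypotheses of the theorem: \ref{A1} to make the abelian information cyclotomic, \ref{A3} to see the extension is non-split in the correct direction, and the auxiliary assumption on $\kappa_{i+1}$ to handle the levels $i\geqslant 2$ where the cohomological input is not automatic.
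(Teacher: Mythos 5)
Your overall architecture (induction up the quadratic tower, \ref{A2}+\ref{A3} to get $[L_{i+1}:L_i]=2$, \ref{A1} to constrain abelian subextensions, then an identification of the resulting central extension) matches the paper's in outline, but the step you yourself flag as ``the real obstacle'' is genuinely not resolved by the tools you propose, and it cannot be. Concretely, take $\ell\equiv 3\bmod 4$, so $r=1$ (half of all cases): among the central extensions of $\PGL_2(\Fl)$ by $C_2$ (theorem \ref{Thm_Quer}), the wrong candidate $2_-\!\PGL_2(\Fl)\subset\SL_2(\F_{\ell^2})$ has commutator subgroup $\SL_2(\Fl)$ and hence abelianisation $C_2$, exactly the same as $\GL_2(\Fl)/S_1\simeq 2_+\!\PGL_2(\Fl)$. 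So ``matching the $2$-power cyclotomic tower against the abelian layers'' — which is all that \ref{A1} can ever give, since \ref{A1} only constrains cyclic subextensions — cannot distinguish them; and \ref{A3} only shows $L_{i+1}\neq L_i$, i.e.\ that the kernel really is $C_2$, not anything about the extension class. The paper's elimination hinges on an input you never use: oddness. The image of complex conjugation in $\Gal(L_0/\Q)=\PGL_2(\Fl)$ is a noncentral involution, and it must lift to an involution of $\Gal(L_1/\Q)$, whereas in $2_-\!\PGL_2(\Fl)$ the only involution is the central one, so no such lift exists (lemma \ref{Ex_lift_ord2}, theorem \ref{Thm_Quer}(iii)); the same complex-conjugation argument is needed again, together with the Schur multiplier of $\PSL_2(\Fl)$, to prove centrality of the $C_{2^i}$-extension when $\ell\equiv1\bmod4$ and $i\geqslant2$ (the paper's Step 5), which your induction also silently assumes.

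Two further points. First, you invoke \ref{A2} to get ``dimension at most $1$'', but \ref{A2} alone does not give that $\delta_i^\sigma/\delta_i$ is a square in $L_i$: the deduction goes through the tower $\kappa_j\subset\kappa_{j+1}\subset\kappa_j\bigl(\sqrt{\delta_i^\sigma/\delta_i}\bigr)$ and then needs $\kappa_{j+1}\subset L_i$; for $j=1$ this already requires knowing $\Gal(L_1/\Q)\simeq\GL_2(\Fl)/S_1$ (so the induction is interleaved, cf.\ proposition \ref{Gal_L1}), and for $j\geqslant2$ it is precisely the auxiliary hypothesis of the theorem — which you instead place in the final matching step, i.e.\ not where it is actually consumed. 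Second, for part (ii) you assert that an isomorphism of extensions makes $\Gal(L_{i+1}/\Q)$ and $\GL_2(\Fl)/S_{i+1}$ conjugate inside $\Sym(Z_{i+1},\varpi_i)$; this is not automatic and is exactly where the paper has to identify the point stabiliser of the transitive action on $Z_{i+1}$: the Borel-type group has three index-$2$ subgroups (lemma \ref{Lem_sub2Borel}), one of which ($H_0$) is excluded by faithfulness and the remaining ambiguity $H_+/H_-$ is absorbed by composing with $A\mapsto\det(A)^{-1}A$. Your wreath-product/complement picture supplies none of this (and, as a minor point, the Galois-stability over $L_i$ of the fibres of $\varpi_i$, needed for your claim that $(\zeta-\zeta^*)^2\in L_i$, is not known a priori — the paper sidesteps it by working with $K_{i+1}=K_i(\sqrt{\delta_i})$, cf.\ lemma \ref{lem_Lquad}).
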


\begin{rk}
It is unfortunate that we have to make the extra assumption that $\kappa_{i+1} \subset L_i$ for all $2 \leqslant i < r$ when $r \geqslant 3$, especially as the author does not know of any computationally cheap way to check this assumption formally. Indeed, if as expected the polynomials $F_i(x)$ correspond to a Galois representation $\saint \rho$, then under the isomorphism $\Gal(L_i/\Q)\simeq \GLFl/S_i$, $\kappa_{i+1}$ corresponds to the kernel of the determinant, whereas the bigger compositum of $K_i$ with itself (cf. remark \ref{Rk_whyA2} below) corresponds to $\left\{ \left[ \begin{smallmatrix} s & 0 \\ 0 & s' \end{smallmatrix} \right] \bmod S_i \ \vert \ s,s' \in S_i \right\}$ and so does not contain $\kappa_{i+1}$, so that unfortunately one has to deal with the 3-fold compositum of $K_i$ to show that $\kappa_{i+1} \subset L_i$. The method presented in the previous section does not have this problem; on the other hand, the values of $\ell$ for which we will use theorem \ref{thm_preuve} in section \ref{Tables} are all such that $r \leqslant 2$, except for $\ell=17$ for which even the method presented in section \ref{Polred} does not suffice to reduce the polynomials $F_i(x)$ anyway.
\end{rk}

Before we start proving theorem \ref{thm_preuve}, let us indicate how the assertions \ref{A1}, \ref{A2} and \ref{A3} can be checked in practice.

\begin{enumerate}
\item Let $N$ be the product of the odd primes different from $\ell$ that ramify in $L_r$, and let $C$ be a 2-cyclic sub extension of $L_r$ of degree $2^k$, $k \leqslant r+1$. Then $C \subseteq \Q(\mu_{2^{r+3} \ell N})$, and so $\Gal\big( \Q(\mu_{2^{r+3} \ell N}) / C\big)$ is the kernel of some surjective morphism
\[ \varphi \colon \Gal\big( \Q(\mu_{2^{r+3} \ell N}) / \Q\big) \simeq (\Z/2^{r+3} \ell N\Z)^* \longrightarrow \Z/2^k\Z. \]
By Chinese remainders, we can write $\varphi = \varphi_\ell+\psi$, where
\[ \varphi_\ell \colon (\Z/\ell\Z)^* \longrightarrow \Z/2^k\Z \quad \text{and} \quad \psi \colon (\Z/2^{r+3} N\Z)^* \longrightarrow \Z/2^k\Z.\]
We then look for odd primes $v \in \N$ such that $v \equiv 1 \bmod \ell$ and $F_r(x)$ is squarefree and splits completely mod $v$. For such $v$, we have $\varphi_\ell(v)=0$ and $\varphi(v)=0$, so that $\psi(v)=0$ too. Therefore, if we can find a collection of such $v$ that spans $(\Z/2^{r+3} N\Z)^* \otimes \Z/2^{r+1}\Z$, then this proves that $\psi$ is necessarily trivial, and thus that \ref{A1} holds.

In practice, finding primes $v$ which split $F_r(x)$ completely should not be too difficult since we expect $\Gal(L_r/\Q)$ to be isomorphic to the $\GLFl/S_r$. Then, the fact that a collection of primes $v$ spans $(\Z/2^{r+3} N\Z)^* \otimes \Z/2^{r+1}\Z$ can be checked by expressing the latter group explicitly as a product of cyclic groups, by determining the image of the primes $v$ in these groups thanks to a discrete logarithm computation, and finally by computing a Smith normal form. This should all be painless, as $N$ will typically involve few prime factors, and these primes will not be very large. Note that even in the case where $r$ is large, the $(\Z/2^{r+3}\Z)^*$-part can be treated easily, since for any $a \in \N$, a subgroup of $(\Z/2^a\Z)^*$ which surjects onto $(\Z/8\Z)^*$ is necessarily the whole of $(\Z/2^a\Z)^*$.

We expect this approach to succeed, because if, as expected, $\Gal(L_r/\Q)$ is isomorphic to $\GLFl/S_r$ and the determinant of the associated Galois representation is a power of the mod $\ell$ cyclotomic character, then $L_r$ will have a unique maximal 2-cyclic subextension $C$, which has no non-trivial Abelian subextensions since
\[ \Gal(L_r/C) = \{ A \in \GLFl/S_r \ \vert \det A = 1 \} \simeq \SLFl \]
has trivial abelianisation.

Note that in the particular case of a Galois representation of level $1$, there is much less work to do: it suffices to check that the discriminant of $K_r$ is, up to a sign, a power of $\ell$.

Also note that if \ref{A1} does hold, then $L_r$ cannot actually have any subextension $C$ such that $\Gal(C/\Q) \simeq \Z/2^{r+1}\Z$, by definition of $r$.

\item We explain in remark \ref{Rk_whyA2} below why we expect $Q_i(x)$ to factor into $2^i-1$ irreducible factors of degree $2^i (\ell+1)$ and one large irreducible factor, and why \ref{A2} should be satisfied for $j=0$ for the small factors and $j=1$ for the large factor. To check that $\kappa_{j+1} \not \subset \Q[x]/R(x)$, it suffices to find a prime $v \in \N$ such that the splitting behaviour of $R(x) \bmod v$ is inconsistent with the splitting behaviour of $v$ in $\kappa_{j+1}$, for instance such that $v$ is not a square mod $\ell$ and such that $R(x) \bmod v$ is squarefree and splits into factors whose degrees are not all divisible by $2^{j+1}$. To prove that $\kappa_{j+1} \subset \Q[x]/R(x^2)$, we check that $R(x^2)$ splits into $2^{j+1}$ factors over $\kappa_{j+1}$, which means that the $\Q$-algebra $\kappa_{j+1} \otimes_\Q \big(\Q[x]/R(x^2)\big)$ has $2^{j+1}$ factors and thus that the minimal polynomial of a primitive element of $\kappa_{j+1}$ splits completely\footnote{In principle one may directly factor over $\Q[x]/R(x^2)$ the minimal polynomial of a primitive element of $\kappa_{j+1}$, but this would involve performing arithmetic in $\Q[x]/R(x^2)$ and in particular to compute an integral basis thereof, which is much slower than working over $\kappa_{j+1}$.} in $\Q[x]/R(x^2)$.

Although this is the most computation time-demanding part of the certification process, it is quite fast when $r=1$ (for $\ell=31$ it merely takes a few minutes on the author's laptop), which occurs for half of the values of $\ell$, and for $r=2$ it remains quite tractable. This is a major improvement compared to the geometric method presented in section \ref{Galois_geom}.

\item For \ref{A3}, we simply loop over primes $v \in \N$ and factor the polynomials $F_i(x)$ mod $v$ until all the couples $(i,i+1)$ have been dealt with. As explained above, such primes $v$ should not be too hard to come by.
\end{enumerate}

\bigskip

We assume henceforth that \ref{A1}, \ref{A2} and \ref{A3} hold, and proceed to the proof of part (i) of theorem \ref{thm_preuve}. Our proof consists in examining $\Gal(L_i/\Q)$ inductively for $i=1,\dots,r$. For clarity, we have divided the induction loop into six steps.

\bigskip
\noindent\framebox{\textbf{Step 1: The Galois closures are not so large}}
\bigskip

Since $K_{i+1} = K_i(\sqrt{\delta_i})$, we know that
\[ L_{i+1} = L_i\big(\sqrt{\delta_i^\sigma}, \sigma \in \Gal(L_i/\Q) \big). \]
\begin{lem}\label{lem_Lquad}
Actually, $L_{i+1} = L_i(\sqrt{\delta_i})$ is a nontrivial quadratic extension of $L_i$.
\end{lem}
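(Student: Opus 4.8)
The plan is to prove the lemma as the first step of the induction behind theorem~\ref{thm_preuve}, so that besides \ref{A1}, \ref{A2}, \ref{A3} (and, when $r \geqslant 3$, the supplementary hypothesis) I may use the inductive hypothesis that $\Gal(L_i/\Q) \simeq \GLFl/S_i$. Everything reduces to two claims: (a) $L_{i+1} \supsetneq L_i$; and (b) $\delta_i^\sigma/\delta_i \in (L_i^*)^2$ for every $\sigma \in \Gal(L_i/\Q)$. Indeed, granting (b), each conjugate $\sqrt{\delta_i^\sigma}$ is $\sqrt{\delta_i}$ times an element of $L_i$, so the identity $L_{i+1} = L_i\big(\sqrt{\delta_i^\sigma} : \sigma \in \Gal(L_i/\Q)\big)$ collapses to $L_{i+1} = L_i(\sqrt{\delta_i})$, a quadratic-or-trivial extension of $L_i$; and (a) excludes the trivial case.

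For (a) I would argue by contradiction. By our choice of $p$, the polynomial $F_{i+1}(x)$ is irreducible mod $p$, so the Frobenius at $p$ acts on its roots $Z_{i+1}$ as a single cycle of length $\deg F_{i+1} = 2^{i+1}(\ell+1)$; hence $\Gal(L_{i+1}/\Q)$ has an element of order divisible by $2^{i+1}(\ell+1)$. If $L_{i+1} = L_i$, this element lies in $\Gal(L_i/\Q) \simeq \GLFl/S_i$. But running through the conjugacy classes of $\GLFl$ — scalar, split semisimple, unipotent times scalar, non-split semisimple — one checks that the largest order of an element of $\GLFl/S_i$ is $2^i(\ell+1)$, attained on the non-split torus, whose image in $\GLFl/S_i$ is cyclic of order $(\ell^2-1)/\# S_i = 2^i(\ell+1)$. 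Since $2^{i+1}(\ell+1) > 2^i(\ell+1)$, this is a contradiction.

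Claim (b) is the heart of the matter, and this is where \ref{A2} enters. Fix $\sigma$; if $\delta_i^\sigma = \delta_i$ there is nothing to prove, so assume not and set $\beta = \delta_i^\sigma/\delta_i \in L_i^*$, which is then a root of $Q_i(x)$. Let $R(x)$ be the irreducible factor of $Q_i(x)$ having $\beta$ as a root, so $\Q(\beta) = \Q[x]/R(x)$ is a subfield of $L_i$. Applying \ref{A2} to this $R$ gives an integer $j \leqslant i$ with $\kappa_{j+1} \not\subseteq \Q(\beta)$ while $\kappa_{j+1}$ embeds unitally into $\Q[x]/R(x^2)$. As every field factor of $\Q[x]/R(x^2)$ is $\overline{\Q}$-conjugate to $\Q(\sqrt{\beta})$ and $\kappa_{j+1}/\Q$ is Galois, we deduce $\kappa_{j+1} \subseteq \Q(\sqrt{\beta})$; combined with $\kappa_{j+1} \not\subseteq \Q(\beta)$ and $[\Q(\sqrt{\beta}):\Q(\beta)] \leqslant 2$, this gives $[\Q(\sqrt{\beta}):\Q(\beta)] = 2$, $\kappa_j = \kappa_{j+1} \cap \Q(\beta) \subseteq \Q(\beta)$, and $\Q(\sqrt{\beta}) = \Q(\beta)\,\kappa_{j+1} = \Q(\beta)\big(\sqrt{d_j}\big)$, where $d_j \in \kappa_j^*$ satisfies $\kappa_{j+1} = \kappa_j\big(\sqrt{d_j}\big)$. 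Hence $\beta/d_j \in \big(\Q(\beta)^*\big)^2 \subseteq (L_i^*)^2$.

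The last ingredient is that $\kappa_{j+1} \subseteq L_i$: this yields $\sqrt{d_j} \in L_i$, so $d_j \in (L_i^*)^2$, and therefore $\beta \in (L_i^*)^2$, which is (b). For $j = 0$ this is $\kappa_1 \subseteq L_0 \subseteq L_i$, the consequence of \ref{A1} recorded just after its statement. For $j = 1$, which forces $r \geqslant 2$ (otherwise $S_1 = S_r$), a short computation identifies $(\GLFl/S_1)^{\ab}$ with $\Flx/S_1^2$, cyclic of order $4$; the maximal abelian subextension of $L_1$ is thus a cyclic quartic field, which by \ref{A1} is unramified outside $\ell$ and hence equals $\kappa_2$. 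For larger $j \leqslant i$ one uses the supplementary hypothesis $\kappa_{i+1} \subseteq L_i$ ($2 \leqslant i < r$), which contains these $\kappa_{j+1}$. I expect (b) to be the main obstacle: the reason for introducing $Q_i$ is precisely to be able to read off from \ref{A2} — rather than from the unwieldy $\GLFl/S_i$-module structure of the permutation module on $V_i$ — that $\beta$ differs from the cyclotomic element $d_j$ by a square of $\Q(\beta)$, and it is exactly the need for $\kappa_{j+1} \subseteq L_i$ that is responsible for the awkward extra hypothesis when $r \geqslant 3$.
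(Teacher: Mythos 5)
Your reduction of the lemma to (a) nontriviality of $L_{i+1}/L_i$ and (b) $\delta_i^\sigma/\delta_i \in (L_i^*)^2$ for all $\sigma$, and your whole treatment of (b), coincide in substance with the paper's proof: the paper likewise takes the irreducible factor $R$ of $Q_i$ vanishing at $\beta=\delta_i^\sigma/\delta_i$, uses \ref{A2} to place $\kappa_{j+1}$ inside $\Q(\sqrt{\beta})$ but not inside $\Q(\beta)$, and invokes $\kappa_{j+1}\subseteq L_i$ proved by exactly your three-case split (\ref{A1} for $j=0$, proposition \ref{Gal_L1} for $j=1$, the extra hypothesis of theorem \ref{thm_preuve} for $j\geqslant 2$) to conclude $\sqrt{\beta}\in L_i$. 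Your bookkeeping via $d_j$ with $\kappa_{j+1}=\kappa_j(\sqrt{d_j})$ and $\beta/d_j\in(\Q(\beta)^*)^2$ is if anything a little tidier than the paper's field diagram, whose displayed conclusion $\kappa_j\bigl(\sqrt{\beta}\bigr)=\kappa_{j+1}(\beta)$ tacitly needs $\kappa_{j+1}\not\subseteq\kappa_j(\beta)$; your compositum computation $\Q(\sqrt{\beta})=\Q(\beta)\kappa_{j+1}$ avoids that point. Where you genuinely diverge is (a): the paper reads nontriviality directly off \ref{A3} (a prime totally split in $L_i$ but not in $L_{i+1}$), whereas you use the auxiliary prime $p$, chosen so that $F_{i+1}$ is irreducible mod $p$, to produce a Frobenius acting as a full $2^{i+1}(\ell+1)$-cycle and compare with the maximal element order of the candidate group. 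This is a legitimate alternative which, as a bonus, makes \ref{A3} unnecessary for this lemma.

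One caveat about your framing of the induction: you grant yourself $\Gal(L_i/\Q)\simeq\GLFl/S_i$ as inductive hypothesis. In the paper's induction this is only known for $i\leqslant 1$ (section \ref{GalProj} and proposition \ref{Gal_L1}); for $2\leqslant i<r$ the loop only yields that $\Gal(L_i/\Q)$ is a central extension of $\PGL_2(\Fl)$ by $C_{2^i}$, and which of the four extensions of theorem \ref{Thm_Quer} it is gets decided only at $i=r$ and then propagated back down (remark \ref{Rk_WhoIsGL_S}). So when $r\geqslant 3$ the hypothesis you invoke is not available at the moment you need it, and nothing in your plan supplies it loop by loop. Fortunately your argument only uses an exponent bound, which survives this weakening: any element of an extension of $\PGL_2(\Fl)$ by a group of exponent dividing $2^i$ has order at most $2^i(\ell+1)$, since its image in $\PGL_2(\Fl)$ has order at most $\ell+1$ and its $m$-th power then lies in the kernel. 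Replacing your conjugacy-class analysis of $\GLFl/S_i$ by this remark (or simply citing \ref{A3}, as the paper does) closes the point; your $j=1$ step in (b) is unaffected, since it uses only $\Gal(L_1/\Q)\simeq\GLFl/S_1$, which proposition \ref{Gal_L1} does provide.
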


\begin{proof}
According to \ref{A3}, for each $i$, there exists a rational prime that is totally split in $L_i$ but not in $L_{i+1}$, which proves that the extension $L_{i+1}/L_i$ is not trivial. Showing that it is quadratic amounts to proving that $\frac{\delta_i^\sigma}{\delta_i}$ is a square in $L_i$ for all $\sigma \in \Gal(L_i/\Q)$. To see this, pick a $\sigma \in G_\Q$ such that $\delta_i^\sigma \neq \delta_i$, so that $\Q\big(\frac{\delta_i^\sigma}{\delta_i}\big)$ is isomorphic to $\Q[x]/R(x)$ for some irreducible factor of $Q_i(x)$ over $\Q$. The polynomial $R(x^2)$ may be reducible, but in any case $\Q\big(\frac{\delta_i^\sigma}{\delta_i}\big)$ is a factor of the algebra $\Q[x]/R(x^2)$. 

We claim that $\kappa_{i+1} \subset L_i$ for all $i < r$. Indeed,

\begin{itemize}
\item for $i=0$ it follows from the fact that $\Gal(L_0/\Q)=\PGLFl$ has a quotient of order $2$ so that $L_0$ has a quadratic subfield, which can only be $\kappa_1= \Q(\sqrt{\ell^*})$ according to \ref{A1},
\item for $i=1$ (which we only need to consider when $r \geqslant 2$), it follows again from \ref{A1} and the fact that we know after one induction loop (cf. proposition \ref{Gal_L1}) that $\Gal(L_1/\Q)$ is isomorphic to $\GLFl/S_1$ and thus has a quotient isomorphic to $\Z/4\Z$ since $r \geqslant 2$,
\item and finally, for $i\geqslant 2$, this is the extra hypothesis of theorem \ref{thm_preuve} (which we thus only need when $r \geqslant 3$).
\end{itemize}

Therefore, for $j \leqslant i$ we may consider then the extension diagram
\[ \xymatrix{
&& L_i \\
\kappa_j\left(\sqrt{\frac{\delta_i^\sigma}{\delta_i}}\right) \ar@{..}[urr]^-{?} & & \kappa_{j+1}\left(\frac{\delta_i^\sigma}{\delta_i} \right) \ar@{-}[u] \\
& \kappa_j\left(\frac{\delta_i^\sigma}{\delta_i} \right) \ar@{-}[ul]^{2} \ar@{-}[ur]_{2} & \\
&& \kappa_{j+1} \ar@{-}[uu] \\
& \kappa_j \ar@{-}[uu] \ar@{-}[ur] & \\
} \]

The two extensions marked with a $2$ in this diagram are at most quadratic. We may assume that the extension $\kappa_j\left(\sqrt{\frac{\delta_i^\sigma}{\delta_i}} \right) / \kappa_j\left(\frac{\delta_i^\sigma}{\delta_i} \right)$ is not trivial, since the proof that $\sqrt{\frac{\delta_i^\sigma}{\delta_i}} \in L_i$ is over if it is. According to \ref{A2}, we may pick $j$ such that $\kappa_{j+1} \subset \Q\left(\sqrt{\frac{\delta_i^\sigma}{\delta_i}}\right)$. But then we must have
\[ \kappa_j\left(\sqrt{\frac{\delta_i^\sigma}{\delta_i}}\right) = \kappa_{j+1}\left(\frac{\delta_i^\sigma}{\delta_i} \right), \]
so that $\sqrt{\frac{\delta_i^\sigma}{\delta_i}} \in L_i$ as claimed.
\end{proof}

As a consequence, $L_{i+1} = L_i(\sqrt{\delta_i})$ and $\Gal(L_{i+1}/\Q)$ is an extension of $\Gal(L_{i}/\Q)$ by $C_2$. This extension is necessarily central, since $\Aut(\Z/2\Z)$ is trivial.

\begin{rk}\label{Rk_whyA2}
If $F_i(x)$ corresponds to $\saint \rho^{S_i}$ as expected, and if it holds that $K_i = \Q(\delta_i)$ and that $\Q\left(\frac{\delta_i^\sigma}{\delta_i}\right) = \Q(\delta_i, \delta_i^\sigma)$ (which is extremely likely), then we get an indexation of the conjugates of $\delta_i$ by $V_i$, and under the identification of $\Gal(L_i/\Q)$ with $\GLFl/S_i$ provided by $\rho^{S_i}$, the field $\Q\left(\frac{\delta_i^\sigma}{\delta_i}\right)$ corresponds by Galois theory to a conjugate of the subgroup $\{ \left[ \begin{smallmatrix} s & * \\ 0 & * \end{smallmatrix} \right] \bmod S_i, \, s \in S_i \}$ or $\{ \left[ \begin{smallmatrix} s & 0 \\ 0 & s' \end{smallmatrix} \right] \bmod S_i, \, s,s' \in S_i \}$ of $\GLFl/S_i$, depending on whether the vectors indexing $\delta_i$ and $\delta_i^\sigma$ are collinear or not. Therefore, we expect $Q_i(x)$ to split over $\Q$ into $2^i-1$ irreducible factors of degree $2^i (\ell+1)$, corresponding to the nontrivial scalar elements in $\GLFl/S_i$, plus one large irreducible factor corresponding to nonscalar elements.

Besides, in the case when the vectors indexing $\delta_i$ and $\delta_i^\sigma$ are not collinear, for $j=i$ the Galois subgroup diagram corresponding to the subfield diagram in the above proof would be
\[ \xymatrix{
&& 1 \\
? & & {\phantom{_{s,s' \in S_i, \, ss' \in S_{i+1}}}} {\left[ \begin{smallmatrix} s & 0 \\ 0 & s' \end{smallmatrix} \right] } \bmod S_i \ _{s,s' \in S_i, \, ss' \in S_{i+1}} \ar@{-}[u] \\
& {\phantom{ A_{(s,s' \in S_i)}} } {\left[ \begin{smallmatrix} s & 0 \\ 0 & s' \end{smallmatrix} \right] } \bmod S_i \ _{(s,s' \in S_i)} \ar@{-}[ul]^2 \ar@{-}[ur]_2 & \\
&& \det^{-1}(S_{i+1}) \ar@{-}[uu] \\
& \det^{-1}(S_i) \ar@{-}[uu] \ar@{-}[ur] & \\
} \]
But the group $\{ \left[ \begin{smallmatrix} s & 0 \\ 0 & s' \end{smallmatrix} \right] \bmod S_i, \, s,s' \in S_i \}$ is isomorphic to $S_i$, hence is cyclic, so the two quadratic extensions of $\Q\left(\frac{\delta_i^\sigma}{\delta_i}\right)$ marked with a $2$ in the above diagrams should coincide. We therefore expect \ref{A2} to hold for the large factor of $Q_i(x)$ with $j=i$.

Similarly, when the vectors indexing $\delta_i$ and $\delta_i^\sigma$ are collinear, we get for $j=0$ the subgroup diagram
\[ \xymatrix{
&& 1 \\
? & & {\phantom{_{(s \in S_i, \, ss' \in \Flx^2)}}} {\left[ \begin{smallmatrix} s & * \\ 0 & s' \end{smallmatrix} \right] } \bmod S_i \ _{(s \in S_i, \, ss' \in \Flx^2)} \ar@{-}[u] \\
& {\phantom{ _{(s \in S_i)}} } {\left[ \begin{smallmatrix} s & * \\ 0 & * \end{smallmatrix} \right] } \bmod S_i \ _{(s \in S_i)} \ar@{-}[ul]^2 \ar@{-}[ur]_2 & \\
&& \det^{-1}(\Flx^2) \ar@{-}[uu] \\
& \GLFl/S_i \ar@{-}[uu] \ar@{-}[ur] & \\
} \]
and since $\{ \left[ \begin{smallmatrix} s & * \\ 0 & * \end{smallmatrix} \right] \bmod S_i, \, s \in S_i \} \simeq \Fl \rtimes \Flx$ has only one subgroup of index $2$, we expect \ref{A2} to hold for the small factors of $Q_i(x)$ with $j=0$.
\end{rk}

\bigskip
\noindent\framebox{\textbf{Step 2: Central 2-cyclic extensions of $\PGL_2(\Fl)$}}
\bigskip

In what follows, for $n \in \N$ we denote by $C_n$ the cyclic group of order $n$. In order to go on with the proof, we will need to know the classification of the central extensions of $\PGL_2(\Fl)$ by $C_{2^i}$, $i \in \N$. 

It is well-known (cf. for instance \cite[theorem 1.2.5]{CohomNF}) that given a group $G$ and a $G$-module $M$, the extensions of $G$ by $M$ such that the conjugation action of lifts of elements of $G$ on $M$ corresponds to the $G$-module structure on $M$ are classified by the cohomology group $H^2(G,M)$. The class of the cocycle $\beta \colon G \times G \longrightarrow M$ corresponds to the set $M \times G$ endowed with the group law
\[ (m,g)\cdot (m',g') = \big(m+g \cdot m'+\beta(g,g'),g g' \big). \]
In particular, the following result is immediate:

\begin{lem}\label{Ex_lift_ord2}
Consider a (necessarily central) extension
\[ 1 \longrightarrow C_2 \longrightarrow \widetilde G \longrightarrow G \longrightarrow 1 \]
of a group $G$ by $C_2$. Let $\beta \colon G \times G \longrightarrow C_2$ be a cocycle representing the corresponding cohomology class, and let $g \in G$ be an element of $G$ of order $2$. Then the lifts of $g$ in $\widetilde G$ have order $2$ if $\beta(g,g)$ is trivial, but have order $4$ else.
\end{lem}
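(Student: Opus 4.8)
The plan is to prove the lemma by a direct computation in the explicit model of $\widetilde G$ recalled just above, namely the set $C_2 \times G$ equipped with the group law $(m,g)\cdot(m',g') = \big(m+m'+\beta(g,g'),\,gg'\big)$, where $C_2$ is written additively and, the extension being central, the $G$-action on $C_2$ is trivial. Under this identification the lifts of a given $g \in G$ are precisely the two elements $(0,g)$ and $(1,g)$.

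First I would observe that both lifts have the same order, so that it suffices to analyse $(0,g)$. Writing $z=(1,1)$ for the generator of the central copy of $C_2$ inside $\widetilde G$, we have $(1,g)=z\cdot(0,g)$; since $z$ is central and of order $2$, it follows that $(1,g)^n = z^n\,(0,g)^n$ for every $n$, and hence that $(0,g)$ and $(1,g)$ have the same order. Next I would compute the square of $(0,g)$: since $g$ has order $2$ in $G$ and the action is trivial,
\[ (0,g)^2 = \big(0+0+\beta(g,g),\,g^2\big) = \big(\beta(g,g),1\big). \]
If $\beta(g,g)$ is trivial, this is the identity of $\widetilde G$, and since $(0,g)$ maps to $g\neq 1$ in $G$ it is not itself the identity, so its order is exactly $2$. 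If $\beta(g,g)$ is nontrivial, then $(0,g)^2 = z$, which has order $2$, so $(0,g)^4 = z^2 = 1$ while $(0,g)^2\neq 1$, giving order exactly $4$. Combined with the previous paragraph this proves the lemma.

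I do not expect any real obstacle here: the statement is an immediate unwinding of the cocycle formula, and the only point worth spelling out carefully is that the order of a lift of $g$ is independent of which of the two lifts is chosen, which is exactly what the centrality of $C_2$ in $\widetilde G$ supplies.
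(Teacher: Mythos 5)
Your proof is correct and is essentially the argument the paper has in mind: the paper simply declares the lemma immediate from the explicit model $(m,g)\cdot(m',g')=\big(m+g\cdot m'+\beta(g,g'),gg'\big)$, and your computation $(0,g)^2=\big(\beta(g,g),1\big)$ together with the centrality argument that both lifts $(0,g)$ and $(1,g)=z\cdot(0,g)$ share the same order is exactly that unwinding. The only point worth making explicit is that you (like the statement itself) tacitly take $\beta$ normalised, i.e.\ $\beta(1,h)=\beta(h,1)=0$, so that $(0,1)$ is the identity of $\widetilde G$ and the value $\beta(g,g)$ is well defined; this matches the convention the paper adopts in theorem \ref{Thm_Quer}(iii).
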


\bigskip

Furthermore (cf. \cite[theorem 2.1.19]{Karpi}), if the $G$-action on $M$ is trivial, then there is a split exact sequence of Abelian groups
\begin{equation}\label{Eq_H2split}
\xymatrix{ 0 \ar[r] & \Ext^1_\Z(G^{\ab},M) \ar@{^(->}[r]^-{\phi} & H^2(G,M) \ar@<2pt>[r]^-{\psi} & \Hom\big(\widehat{M},H^2(G,\C^*)\big) \ar@<2pt>[l] \ar[r] & 0 \\
} \tag{$\star$}
\end{equation}
where $\Ext^1_\Z(G^{\ab},M)$ classifies the abelian extensions of the abelianised $G^{\ab}$ of $G$ by $M$, $\widehat{M} = \Hom(M,\C^*)$ is the group of complex-valued characters on $M$, $H^2(G,\C^*)$ (with trivial $G$-action on $\C^*$) is the so-called \emph{Schur multiplier} of $G$, and $\psi$ maps the class of the cocycle $\beta \in H^2(G,M)$ to the \emph{transgression map} (not to be confused with a trace)
\[ \begin{array}{cccc}
\Tra_\beta \colon & \widehat{M} & \longrightarrow & H^2(G,\C^*) \\
& \chi & \longmapsto & \chi \circ \beta
\end{array}
\]
associated to the class of $\beta$. Besides, the Schur multiplier $H^2(G,\C^*)$ is trivial if $G$ is cyclic (cf. \cite[proposition 2.1.1.(ii)]{Karpi}), and for each central extension $\widetilde{G}$ of $G$ by $M$, the subgroup $M \cap D\widetilde{G}$ of $\widetilde{G}$ is isomorphic to the image of $\Tra_\beta$, where $\beta \in H^2(G,M)$ is the cohomology class corresponding to $\widetilde G$, and $D\widetilde{G}$ denotes the commutator subgroup of $\widetilde G$ (cf. \cite[proposition 2.1.7]{Karpi}).

Applying this to the group $G=\PGL_2(\Fl)$ and the trivial $G$-module $M = C_{2^i}$ yields the following result (cf. \cite{Quer}):

\pagebreak

\begin{thm}\label{Thm_Quer}
Let $i \in \N$.

\vspace{-1mm}

\begin{enumerate}[label=(\roman*)]
\item $H^2\big(\PGL_2(\Fl),C_{2^i} \big) \simeq C_2 \times C_2$, so that there are four central extensions of $\PGL_2(\Fl)$ by $C_{2^i}$.

\item These extensions are

\vspace{-3mm}

\begin{itemize}
\item the trivial extension $C_{2^i} \times \PGL_2(\Fl)$, corresponding to the trivial cohomology class $\beta_0 \in H^2\big(\PGL_2(\Fl),C_{2^i} \big)$,
\item the group $2^i_{\det} \! \PGL_2(\Fl)$, whose class $\beta_{\det} \in H^2\big(\PGL_2(\Fl),C_{2^i} \big)$ is the inflation of the non-trivial element of
\[ H^2\big(\PGL_2(\Fl)^{\ab},C_{2^i} \big) \simeq C_2 \]
(in other words, $\beta_{\det}(g,g')$ is non-zero if and only if neither $g$ nor $g'$ lie in $\PSL_2(\Fl)$),
\item the group $2^i_- \! \PGL_2(\Fl)$, with class $\beta_- \in H^2\big(\PGL_2(\Fl),C_{2^i} \big)$, defined for $i=1$ as
\[ 2_- \! \PGL_2(\Fl) = \SL_2(\Fl) \sqcup \left[ \begin{smallmatrix} \sqrt{\varepsilon} & 0 \\ 0 & 1/\sqrt{\varepsilon} \end{smallmatrix} \right] \SL_2(\Fl) \subset \SL_2(\F_{\ell^2}) \]
where $\varepsilon$ denotes a generator of $\Flx$, and that for $i \geqslant 2$ corresponds to the image of the cohomology class of $2_- \! \PGL_2(\Fl)$ by the map
\[ H^2\big(\PGL_2(\Fl),C_2 \big) \longrightarrow H^2\big(\PGL_2(\Fl),C_{2^i} \big) \]
induced by the embedding of $C_2$ into $C_{2^i}$,
\item and the group $2^i_+ \! \PGL_2(\Fl)$, whose associated cohomology class $\beta_+$ is the sum in $H^2\big(\PGL_2(\Fl),C_{2^i} \big)$ of $\beta_{\det}$ and of $\beta_-$.
\end{itemize}

\item Identify $C_2$ with $\Z/2\Z$, let $g \in \PGL_2(\Fl)$ be an element of order $2$, and let $\beta_0$, $\beta_{\det}$, $\beta_-$ and $\beta_+$ be normalised cocycles (that is to say $\beta(1,h) = \beta(h,1) = 0$ for all $h \in \PGL_2(\Fl)$) representing the cohomology classes of these four extensions. If $i=1$, then their value at $(g,g)$ does not depend on the choice of these cocycles, and are

\vspace{-3mm}

\begin{itemize}
\item $\beta_0(g,g) = 0 \ \forall g,$
\item $\beta_{\det}(g,g) = \left\{ \begin{array}{ll} 0, & g \in \PSL_2(\Fl), \\ 1, & g \not \in \PSL_2(\Fl), \end{array} \right.$
\item $\beta_-(g,g) = 1 \ \forall g \text{ of order } 2,$
\item $\beta_+(g,g) = \left\{ \begin{array}{ll} 1, & g \in \PSL_2(\Fl), \\ 0, & g \not \in \PSL_2(\Fl). \end{array} \right.$
\end{itemize}

\item For $i \geqslant 2$, the abelianisations of these extensions are

\vspace{-3mm}

\begin{itemize}
\item $\big( C_{2^i} \times \PGL_2(\Fl) \big)^{\ab} \simeq C_{2^i} \times C_2$,
\item $\big( 2^i_{\det} \! \PGL_2(\Fl) \big)^{\ab} \simeq C_{2^{i+1}}$,
\item $\big( 2^i_- \! \PGL_2(\Fl) \big)^{\ab} \simeq C_{2^{i-1}} \times C_2$,
\item $\big( 2^i_+ \! \PGL_2(\Fl) \big)^{\ab} \simeq C_{2^i}$.
\end{itemize}
\end{enumerate}
\end{thm}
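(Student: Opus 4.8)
The plan is to read off the entire statement from the split exact sequence $(\star)$ specialised to $G = \PGLFl$ and the trivial module $M = C_{2^i}$, using two classical facts valid for $\ell \geqslant 5$: the abelianisation $\PGLFl^{\ab} = \PGLFl/\PSLFl \simeq C_2$ (because $\PSLFl$ is perfect), and the Schur multiplier $H^2(\PGLFl,\C^*) \simeq C_2$ (cf. \cite{Quer}). With these in hand, $(\star)$ reads $0 \to \Ext^1_\Z(C_2,C_{2^i}) \to H^2(\PGLFl,C_{2^i}) \to \Hom(\widehat{C_{2^i}},C_2) \to 0$, and since $\Ext^1_\Z(C_2,C_{2^i}) \simeq C_{2^i}/2C_{2^i} \simeq C_2$ and $\Hom(\widehat{C_{2^i}},C_2) = \Hom(C_{2^i},C_2) \simeq C_2$ and the sequence splits, we obtain $H^2(\PGLFl,C_{2^i}) \simeq C_2 \times C_2$, which is part (i).

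For (ii) I would match the four classes to their positions in $(\star)$. The zero class is the trivial extension. The image under $\phi$ of the nontrivial element of $\Ext^1_\Z(C_2,C_{2^i})$ is, by definition of $\phi$, the inflation along $\PGLFl \twoheadrightarrow \PGLFl^{\ab}=C_2$ of the nonsplit abelian extension of $C_2$ by $C_{2^i}$, i.e. of $C_{2^{i+1}}$; this is $\beta_{\det}$, and the displayed formula for $\beta_{\det}(g,g')$ holds because the inflated cocycle factors through $\PGLFl^{\ab}$. For $i=1$, I would check that the explicit group $\SLFl \sqcup \left[\begin{smallmatrix}\sqrt\varepsilon & 0\\ 0 & 1/\sqrt\varepsilon\end{smallmatrix}\right]\SLFl \subset \SL_2(\F_{\ell^2})$ is a central $C_2$-extension of $\PGLFl$ with kernel $\{\pm I\}$ whose class lies outside $\Im\phi$: its commutator subgroup is all of $\SLFl$ (which is perfect and contains $-I$), so $\{\pm I\} \subset D\widetilde G$, forcing $\Im\Tra_{\beta_-} \neq 0$, i.e. $\psi(\beta_-) \neq 0$; this identifies $\beta_-$ as one of the two classes not in $\Im\phi$, and $\beta_+ := \beta_{\det}+\beta_-$ as the other, the two being distinguished by part (iii). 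For $i \geqslant 2$ one defines $\beta_-$ as the push-forward of the $i=1$ class along $C_2 \hookrightarrow C_{2^i}$, which still has nonzero $\psi$-image by naturality of $(\star)$ in the coefficient module, and sets $\beta_+ := \beta_{\det}+\beta_-$.

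For (iii) ($i=1$) I would invoke Lemma \ref{Ex_lift_ord2}: the value $\beta(g,g)$ at an involution $g$ is $0$ or $1$ according as the lifts of $g$ have order $2$ or $4$, an invariant of the cohomology class (hence independent of the chosen normalised cocycle) and of the conjugacy class of $g$. There are exactly two conjugacy classes of involutions in $\PGLFl$: a split one represented by $\left[\begin{smallmatrix}1&0\\0&-1\end{smallmatrix}\right]$ (determinant $-1$) and a nonsplit one represented by a trace-zero matrix of non-square determinant, exactly one of which lies in $\PSLFl$ depending on $\ell \bmod 4$. Then: for $\beta_0$ the lifts always have order $2$; for $\beta_{\det}$, restricting the inflated cocycle to $\langle g\rangle$ gives the zero class when $g\in\PSLFl$ and the class of $C_4$ otherwise, which is the stated dichotomy; for $\beta_-$, I would lift each of the two involution classes explicitly into $\SL_2(\F_{\ell^2})$, scaling a representative so that its determinant becomes $1$ and it lands in the appropriate coset of $\SLFl$, and check directly that the square of the lift equals $-I \neq I$ in both cases, so $\beta_-(g,g)=1$ for every involution; finally $\beta_+(g,g) = \beta_{\det}(g,g)+\beta_-(g,g)$ by additivity in $H^2$.

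For (iv) ($i \geqslant 2$) I would compute $\widetilde G^{\ab} = \widetilde G/D\widetilde G$ from the quoted identity $M\cap D\widetilde G \simeq \Im\Tra_\beta$, the exact sequence $0 \to M/(M\cap D\widetilde G) \to \widetilde G^{\ab} \to \PGLFl^{\ab} \to 0$, and the count $\#\widetilde G^{\ab} = 2^{i+1}/\#\Im\Tra_\beta$. Since $\psi$ is a homomorphism, $\Im\Tra_\beta$ is trivial for $\beta_0$ and $\beta_{\det}$ (both lie in $\ker\psi = \Im\phi$) and equals $C_2$ for $\beta_-$ and $\beta_+$, which already fixes the orders $2^{i+1}, 2^{i+1}, 2^i, 2^i$. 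To pin down the isomorphism type I would use the defining models: $C_{2^i}\times\PGLFl$ has abelianisation $C_{2^i}\times C_2$; $2^i_{\det}\PGLFl = C_{2^{i+1}}\times_{C_2}\PGLFl$ has abelianisation $C_{2^{i+1}}\times_{C_2}\PGLFl^{\ab} = C_{2^{i+1}}$; $2^i_-\PGLFl$ is the push-out of $2_-\PGLFl$ along $C_2 \hookrightarrow C_{2^i}$, and since $(2_-\PGLFl)^{\ab} \simeq C_2$ with $-I \mapsto 0$, its abelianisation is $(C_{2^i}\times C_2)/\langle(2^{i-1},0)\rangle \simeq C_{2^{i-1}}\times C_2$; and $2^i_+\PGLFl$, of class $\beta_{\det}+\beta_-$, is handled from its Baer-sum model (equivalently, by pushing the nontrivial $\Ext^1$-component of its class along $C_{2^i}\twoheadrightarrow C_{2^i}/C_2$, which stays nonsplit), yielding $C_{2^i}$. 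The step I expect to be the main obstacle is exactly this concrete side of (iii) and (iv): showing $\beta_-(g,g)=1$ for \emph{both} involution classes forces an honest $\SL_2(\F_{\ell^2})$-computation with case distinctions on $\ell\bmod 4$ and on cosets of $\SLFl$; and in (iv) one must track not merely the order of $\widetilde G^{\ab}$ but whether the resulting cyclic-by-$C_2$ extension splits, which is where the $\Ext^1$-component of the class (as opposed to its transgression) enters and where one must beware that push-forward along $C_2 \hookrightarrow C_{2^i}$ does not commute with inflation.
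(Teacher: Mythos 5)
Your proposal is correct and follows essentially the same route as the paper's (sketched) proof: the split sequence \eqref{Eq_H2split} with $\PGL_2(\Fl)^{\ab}\simeq C_2$ and Schur multiplier $C_2$ for (i), the observation that $-I$ is a commutator in $\SL_2(\Fl)\subset 2_-\!\PGL_2(\Fl)$ (so the transgression of $\beta_-$ is non-trivial and $\beta_-\notin\Im\phi$) for (ii), lemma \ref{Ex_lift_ord2} for (iii), and the identification of $C_{2^i}\cap D\widetilde G$ with the image of the transgression for (iv). The one simplification you miss is that in (iii) no explicit lifting computation or case distinction on $\ell\bmod 4$ is needed: since $-I$ is the \emph{unique} involution of $\SL_2(\F_{\ell^2})$, no involution of $\PGL_2(\Fl)$ can lift to an element of order $2$ of $2_-\!\PGL_2(\Fl)$, which gives $\beta_-(g,g)=1$ at once.
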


\begin{proof}
We shall only give the idea of the proof here, and refer the reader to \cite[proposition 2.4 and lemma 3.2]{Quer}.
\begin{enumerate}[label=(\roman*)]
\item On the one hand, the abelianised of $\PGL_2(\Fl)$ is $\PGL_2(\Fl)/\PSL_2(\Fl) \simeq C_2$, so that
\[ \Ext^1_\Z(\PGL_2(\Fl)^{\ab},C_{2^i}) \simeq \Ext^1_\Z(C_2,C_{2^i}) \simeq C_2. \]
On the other hand, the Schur multiplier $H^2\big(\PGL_2(\Fl),\C^*\big)$ is isomorphic to $C_2$ (cf. \cite[proposition 2.3]{Quer}). The result then follows from the split exact sequence \eqref{Eq_H2split}.

\item Consider again the exact sequence \eqref{Eq_H2split}. Then $\beta_{\det}$ lies in the image of $\phi$ since it is inflated from $\PGL_2(\Fl)^{\ab}$. On the other hand, for $i=1$, $\beta_-$ does not lie in $\Im \phi$, for if it did, then the associated transgression map would be trivial, so that the commutator subgroup of $2_- \! \PGL_2(\Fl)$ would meet the kernel $\pm \left[ \begin{smallmatrix} 1 & 0 \\ 0 & 1 \end{smallmatrix} \right]$ of the extension trivially, which is clearly not the case since $\left[ \begin{smallmatrix} -1 & 0 \\ 0 & -1 \end{smallmatrix} \right]$ is a commutator in $\SL_2(\Fl) \subset 2_- \! \PGL_2(\Fl)$. For $i\geqslant 2$, the commutative diagram
\[ \xymatrix{
1 \ar[r] & C_2 \ar[r] \ar@{^(->}[d] & {2_- \! \PGL_2(\Fl)} \ar[r] \ar@{^(->}[d] & \PGL_2(\Fl) \ar[r] \ar@{^(->}[d] & 1 \\
1 \ar[r] & C_{2^i} \ar[r] & {2^i_- \! \PGL_2(\Fl)} \ar[r] & \PGL_2(\Fl) \ar[r] & 1 \\
} \]
shows that $C_{2^i}$ still intersects the commutator subgroup of $2^i_- \! \PGL_2(\Fl)$ non-trivially, so that $\beta_-$ does not lie in $\Im \phi$ either. The extensions $2^i_{\det} \! \PGL_2(\Fl)$ and $2^i_- \! \PGL_2(\Fl)$ thus represent different non-trivial cohomology classes in $H^2\big(\PGL_2(\Fl),C_{2^i} \big) \simeq C_2 \times C_2$, hence the result.

\item It is a general fact (cf. \cite[lemma 3.1]{Quer} that the image at $(g,g)$ of a normalised cocycle representing an extension of a group $G$ by $C_2$ only depends on the cohomology class of this cocycle in $H^2(G,C_2)$.
\begin{itemize}
\item The case of the trivial extension is obvious since the zero cohomology class is represented by the zero cocycle.
\item The case of $\beta_{\det}$ follows from its very definition.
\item Since it is a subgroup of $\SL_2(\F_{\ell^2})$, the group $2_- \! \PGL_2(\Fl)$ has only one element of order $2$, namely the central element $\left[ \begin{smallmatrix} -1 & 0 \\ 0 & -1 \end{smallmatrix} \right]$. In particular, no element $g \in \PGL_2(\Fl)$ of order $2$ remains of order $2$ when lifted to $2_- \! \PGL_2(\Fl)$, and the result follows from lemma \ref{Ex_lift_ord2}.
\item The case of $\beta_+$ follows since we may take $\beta_+ = \beta_{\det} + \beta_-$.
\end{itemize} 

\item Again, the case of the trivial extension is clear. In the other cases, the result follows from the fact that the intersection of $C_{2^i}$ with the commutator subgroup of the extension is isomorphic to the image of the transgression map
\vspace{-2mm}
\[ \Tra_\beta \colon \widehat{C_{2^i}} \longrightarrow H^2\big( \PGL_2(\Fl),\C^*\big) \simeq C_2,\]
which is trivial in the case of $\beta_{\det}$ and non-trivial in the case of $\beta_-$ and $\beta_+$.
\vspace{-5mm}
\end{enumerate}
\end{proof}

We shall now use this classification to prove by elimination that $\Gal(L_i/\Q)$ is isomorphic to $\GL_2(\Fl)/S_i$ for all $i$.

\begin{rk}
The group $\GLFl/S_i$ must be one of the cases presented in theorem \ref{Thm_Quer}, but at this point it is not clear at all which one. We will eventually determine this, cf. remark \ref{Rk_WhoIsGL_S} below.
\end{rk}

\bigskip
\noindent\framebox{\textbf{Step 3: The case of $L_1/L_0$}}
\bigskip

We first deal with the first extension $L_1/L_0$ in the quadratic tower $L_r / \cdots / L_0$. The Galois group $\Gal(L_1/\Q)$ is a (necessarily central) extension of $\Gal(L_0/\Q) \simeq \PGL_2(\Fl)$ by $C_2$. 
\begin{pro}\label{Gal_L1}
$\Gal(L_1/\Q)$ is isomorphic to $\GL_2(\Fl) / S_1$ as an extension of $\PGL_2(\Fl)$.
\end{pro}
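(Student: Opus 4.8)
The plan is to pin down $\Gal(L_1/\Q)$ among the four central extensions of $\PGLFl$ by $C_2$ classified in theorem~\ref{Thm_Quer}, and then to check that $\GLFl/S_1$ is the very same one. By lemma~\ref{lem_Lquad} (used here only in the case $i=0$, where its proof relies solely on $\kappa_1 \subset L_0$, so that there is no circularity) the extension $L_1/L_0$ is quadratic; it is central because $\Aut(C_2)$ is trivial, so $\Gal(L_1/\Q)$ is one of $C_2 \times \PGLFl$, $2_{\det}\PGLFl$, $2_-\PGLFl$, $2_+\PGLFl$. Similarly, $\GLFl/S_1$ is a central extension of $\PGLFl$ by the copy of $C_2$ formed by the scalar matrices modulo $S_1$, so it too is one of these four groups.

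First I would eliminate the trivial extension on both sides. Since $L_1 \subseteq L_r$, assertion \ref{A1} forces every quadratic subfield of $L_1$ to be unramified outside $\ell$; being abelian over $\Q$, such a field lies in $\Q(\mu_{\ell^\infty})$, hence in $\Q(\mu_\ell)$, which --- $\Gal(\Q(\mu_\ell)/\Q)$ being cyclic --- has the single quadratic subfield $\kappa_1 = \Q(\sqrt{\ell^*})$. So $L_1$ has exactly one quadratic subfield, whereas $\big(C_2 \times \PGLFl\big)^{\ab} \simeq C_2 \times C_2$ would produce three; hence $\Gal(L_1/\Q)$ is not the trivial extension. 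On the other side, since $\ell \geqslant 5$ the abelianisation of $\GLFl/S_1$ is a quotient of $\GLFl/\SLFl \simeq \Flx$, hence cyclic, so it is not isomorphic to $C_2 \times C_2$ and $\GLFl/S_1$ is not the trivial extension either.

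It remains to see that $\Gal(L_1/\Q)$ and $\GLFl/S_1$ are the same one of the three groups $2_{\det}\PGLFl$, $2_-\PGLFl$, $2_+\PGLFl$; here I would feed the image of complex conjugation into theorem~\ref{Thm_Quer}(iii) and lemma~\ref{Ex_lift_ord2}. As $L_0 \subseteq L_1$ is not totally real (checked when Bosman's theorem was applied), a complex conjugation $c \in \Gal(L_1/\Q)$ has order $2$, and its image $\bar c$ in $\Gal(L_0/\Q)=\PGLFl$ is a non-trivial involution; moreover, since $\varrho^{\text{proj}} \sim \rho^{\text{proj}}_{f,\l}$ and $\rho_{f,\l}$ is odd, $\bar c$ is conjugate in $\PGLFl$ to $\left[\begin{smallmatrix} 1 & 0 \\ 0 & -1 \end{smallmatrix}\right]$ --- a ``split'' involution. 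Thus $c$ is an order-$2$ lift of the split involution $\bar c$, so by lemma~\ref{Ex_lift_ord2} a normalised cocycle for the extension $\Gal(L_1/\Q)$ vanishes at $(\bar c,\bar c)$; by theorem~\ref{Thm_Quer}(iii), exactly one of $2_{\det}\PGLFl$, $2_-\PGLFl$, $2_+\PGLFl$ has that property ($2_{\det}\PGLFl$ when $-1$ is a square mod $\ell$, i.e. $r \geqslant 2$, and $2_+\PGLFl$ otherwise), so $\Gal(L_1/\Q)$ is that one. The same test applies verbatim to $\GLFl/S_1$: the split involution $\left[\begin{smallmatrix} 1 & 0 \\ 0 & -1 \end{smallmatrix}\right]$ of $\PGLFl$ lifts there to the class of $\left[\begin{smallmatrix} 1 & 0 \\ 0 & -1 \end{smallmatrix}\right]$ in $\GLFl/S_1$, which is an involution because $\left[\begin{smallmatrix} 1 & 0 \\ 0 & -1 \end{smallmatrix}\right]^2 = I \in S_1$ --- so $\GLFl/S_1$ is that same extension, and $\Gal(L_1/\Q) \simeq \GLFl/S_1$ as extensions of $\PGLFl$.

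The step I expect to be the main obstacle is precisely the last one: selecting $\Gal(L_1/\Q)$ among the two non-trivial extensions that survive the quadratic-subfield test. This is the only place where genuine arithmetic of $\rho_{f,\l}$ --- rather than group theory --- is needed, via the oddness of $\rho_{f,\l}$ and the resulting split conjugacy class of complex conjugation; one must also be careful, as noted above, to use only $\kappa_1 \subset L_0$ and not $\kappa_2 \subset L_1$, the latter being itself a consequence of this proposition drawn later in the proof of lemma~\ref{lem_Lquad}.
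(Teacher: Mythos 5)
Your proof is correct, and its field-theoretic half is the paper's own argument: uniqueness of the quadratic subfield of $L_1$ (forced by \ref{A1} via Kronecker--Weber) eliminates the trivial extension, and the existence of an order-$2$ lift of the split involution (complex conjugation, using oddness of $\rho_{f,\l}$) combined with lemma \ref{Ex_lift_ord2} and theorem \ref{Thm_Quer}(iii) eliminates $2_-\!\PGL_2(\Fl)$ and one of $2_{\det}\!\PGL_2(\Fl)$, $2_+\!\PGL_2(\Fl)$ according to $\ell \bmod 4$. Where you genuinely diverge is the last sub-step, deciding that $\GLFl/S_1$ is that same extension class: the paper does not settle this group-theoretically but instead runs the identical Galois-theoretic argument a second time on the auxiliary number field $L'_1$ cut out by $\rho^{S_1}$ (whose Galois group is $\GLFl/S_1$ because $\Im\rho \supseteq \SLFl$ and $\det\rho$ is an odd power of the cyclotomic character), and concludes $\Gal(L_1/\Q) \simeq \Gal(L'_1/\Q) \simeq \GLFl/S_1$; indeed the paper explicitly postpones the identification of $\GLFl/S_i$ among the four classes to remark \ref{Rk_WhoIsGL_S}. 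You instead identify $\GLFl/S_1$ directly: its abelianisation is a quotient of $\Flx$, hence cyclic, ruling out the split class, and the class of $\left[\begin{smallmatrix}1&0\\0&-1\end{smallmatrix}\right]$ in $\GLFl/S_1$ is an order-$2$ lift of the split involution, so by lemma \ref{Ex_lift_ord2} and theorem \ref{Thm_Quer}(iii) it is $2_{\det}\!\PGL_2(\Fl)$ when $\ell \equiv 1 \pmod 4$ and $2_+\!\PGL_2(\Fl)$ when $\ell \equiv -1 \pmod 4$ --- consistent with remark \ref{Rk_WhoIsGL_S}. Your variant is purely group-theoretic and slightly more self-contained here, while the paper's detour through $L'_1$ lets it avoid computing which class $\GLFl/S_1$ represents at this stage, at the price of invoking the arithmetic existence of $\rho^{S_1}$; both yield the same conclusion, and your remark on non-circularity (only $\kappa_1 \subset L_0$ is needed to apply lemma \ref{lem_Lquad} with $i=0$) correctly reflects the paper's inductive structure.
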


\begin{proof}
Let $\beta$ be a normalised cocycle representing the cohomology class corresponding to the extension $\Gal(L_1/\Q)$ of $\PGL_2(\Fl)$. According to theorem \ref{Thm_Quer}(ii), $\Gal(L_1/\Q)$ is isomorphic either to $C_2 \times \PGL_2(\Fl)$, $2_{\det} \! \PGL_2(\Fl)$, $2_- \! \PGL_2(\Fl)$ or $2_+ \! \PGL_2(\Fl)$, and $\beta$ is correspondingly cohomologous to $\beta_0$, $\beta_{\det}$, $\beta_-$ or $\beta_+$.

If $\Gal(L_1/\Q)$ were the trivial extension $C_2 \times \PGL_2(\Fl)$, then $L_1$ would have a subextension $L_1^{\ab}$ with Galois group isomorphic to
\[ \big( C_2 \times \PGL_2(\Fl) \big)^{\ab} \simeq C_2 \times C_2, \]
and hence three distinct quadratic subfields, which contradicts \ref{A1}.

Let now $\tau_1 \in \Gal(L_1/\Q)$ be the complex conjugation relative to some embedding of $L_1$ into $\C$. It induces an element $\tau_0 \in \Gal(L_0/\Q)$, which is not the identity since its image by $\rho_{f,\l}^{\text{proj}}$ is conjugate to $g = \left[ \begin{smallmatrix} 1 & 0 \\ 0 & -1 \end{smallmatrix} \right] \in \PGL_2(\Fl)$. In particular, $\tau_1$ is not trivial either, so it has order $2$. Therefore $\tau_0$ has a lift to $\Gal(L_1/\Q)$ of order $2$, so that $\beta(\tau_0,\tau_0)$ is trivial by lemma \ref{Ex_lift_ord2}. Theorem \ref{Thm_Quer}(iii) then only leaves one possibility: if $\ell \equiv 1 \bmod 4$, then $g \in \PSL_2(\Fl)$, so that $\beta$ cannot be cohomologous to $\beta_-$ nor to $\beta_+$ and so $\Gal(L_1/\Q)$ must be isomorphic to $2_{\det} \! \PGL_2(\Fl)$, whereas if $\ell \equiv -1 \bmod 4$, then $g \not \in \PSL_2(\Fl)$, so that $\beta$ cannot be cohomologous to $\beta_-$ nor to $\beta_{\det}$ and so $\Gal(L_1/\Q)$ must be isomorphic to $2_{+} \! \PGL_2(\Fl)$.

Besides, $\saint L_1$ is a quadratic extension of $\saint L_0$ and has only one quadratic subfield since its Galois group is isomorphic to $\GLFl/S_1$, so that the same reasoning applies and shows that $\Gal(\saint L_1/\Q)$ is isomorphic to $2_{\det} \! \PGL_2(\Fl)$ if $\ell \equiv 1 \bmod 4$ and to $2_{+} \! \PGL_2(\Fl)$ if $\ell \equiv 1 \bmod 4$. Either way, we have
\[ \Gal(L_1/\Q) \simeq \Gal(\saint L_1/\Q) \simeq \GL_2(\Fl) / S_1. \]
\end{proof}

\bigskip
\noindent\framebox{\textbf{Step 4: $\Gal(L_i/\Q)$ is an extension of $\PGL_2(\Fl)$ by $C_{2^i}$}}
\bigskip

If $\ell \equiv -1 \bmod 4$, then $r=1$, so that the proof that $\Gal(L_r/\Q) \simeq \GL_2(\Fl)/S_r$ is over. We therefore assume that $\ell \equiv 1 \bmod 4$ henceforth until we finish proving part (i) of theorem \ref{thm_preuve}. We shall first prove by induction on $i$ that $\Gal(L_i/\Q)$ is an extension of $\PGL_2(\Fl)$ by $\Flx/S_i \simeq C_{2^i}$, then that this extension is central, and finally that it is isomorphic to $\GLFl/S_i$. Note that we have just proved above that it is so for $i = 1$.

\bigskip
 
We first prove that $\Gal(L_i/\Q)$ is an extension of $\PGL_2(\Fl)$ by $C_{2^i}$. Let $1~\leqslant~i<~r$. By induction hypothesis, we have a commutative diagram
\[ \xymatrix @!0 @R=3cm @C=3cm {
& 1 \ar[d] & 1 \ar[d] & 1 \ar[d] & \\
1 \ar[r] & C_2 \ar@{=}[d] \ar[r]^-{j} & q^{-1}(C_{2^i}) \ar[d]^{\iota} \ar[r]^-{q} & C_{2^i} \ar[d]^{\iota} \ar[r] & 1 \\
1 \ar[r] & C_2 \ar[r]^-{j} & \Gal(L_{i+1}/\Q) \ar[r]^q \ar[rd]_{\pi \circ q} & \Gal(L_{i}/\Q) \ar[r] \ar[d]^\pi & 1 \\
& & & \PGL_2(\Fl) \ar[rd] \ar[d] & \\
& & & 1 & 1 \\
} \]
whose middle row and right column are exact. A diagram chase then reveals that the top row and the diagonal short sequence
\[ 1 \longrightarrow q^{-1}(C_{2^i}) \overset{\iota}{\longrightarrow} \Gal(L_{i+1}/\Q) \overset{\pi \circ q}{\longrightarrow} \PGL_2(\Fl) \longrightarrow 1 \]
are exact, so that $\Gal(L_{i+1}/\Q)$ is an extension of $\PGL_2(\Fl)$ by $q^{-1}(C_{2^i})$, which itself is an extension of $C_{2^i}$ by $C_2$, which is necessarily central since $\Aut(C_2)$ is trivial.

We have $H^2(C_{2^i},\C^*) = \{0\}$ because $C_{2^i}$ is cyclic, so the extensions of $C_{2^i}$ by $C_2$ are all Abelian by the exact sequence \eqref{Eq_H2split}, so that $q^{-1}(C_{2^i}) = \Gal(L_{i+1}/L_0)$ is isomorphic either to $C_{2^{i+1}}$ or to $C_{2^i} \times C_2$. We shall now prove that the latter is impossible.

Since $\ell \equiv 1 \bmod 4$, the group $S_1^2 = \Flx^4$ is a strict subgroup of $S_1 = \Flx^2$. The determinant induces a surjective morphism
\[ \xymatrix{\Gal(L_1/\Q) \ar[r]^{\sim} & \GL_2(\Fl) / S_1 \ar@{->>}[r]^-{\det} & \Flx / S_1^2 = \Flx / \Fl^{*4} \simeq C_4, } \]
so that $L_1$ has a $C_4$-subfield, which can only be field $\kappa_2 \subset \Q(\mu_\ell)$ according to \ref{A1}.

\pagebreak

Besides, $\kappa_2$ cannot be contained in $L_0$ because $\PGL_2(\Fl)^{\ab} \simeq C_2$, and since $\kappa_2$ is a quadratic extension of $\kappa_1 = \Q(\sqrt{\ell^*}) \subset L_0$ and $L_1$ is a quadratic extension of $L_0$, we have $L_1 = \kappa_2 L_0$:
\[ \xymatrix{
& L_1=\kappa_2 L_0 \\
& L_0 \ar@{-}[u]_2  \\
 \\
\kappa_2 \ar@{-}[uuur] && \\
& \kappa_1 \ar@{-}[uuu] \ar@{-}[ul]^2  \\
& \Q \ar@{-}[u]^2  \\
} \] 
Now if $\Gal(L_{i+1}/L_0)$ were isomorphic to $C_{2^i} \times C_2$, then, letting $E$ be the subfield of $L_{i+1}$ fixed by ${C_{2^i} \times \{ 1 \}}$, we would have the extension tower
\[ \xymatrix{
& L_{i+1} & \\
& L_{i} \ar@{-}[u]^{\{1\} \times C_2} & \\
&& \\
& L_1=\kappa_2 L_0 \ar@{-}[uu] \ar@/^6pc/@{-}[uuu]^{C_{2^{i-1}} \times C_2} & E \ar@{-}[uuul]_{C_{2^i} \times \{1 \}} \\
& L_0 \ar@{-}[ur]^2 \ar@{-}[u]_2 \ar@/_11pc/@{-}[uuuu]_{C_{2^i} \times C_2} & \\
&& \\
\kappa_2 \ar@{-}[uuur] && \\
& \kappa_1 \ar@{-}[uuu] \ar@{-}[ul]^2 & \\
& \Q \ar@{-}[u]^2 & \\
} \] 
where $C_{2^{i-1}}$ denotes the subgroup of $C_{2^i}$ of index $2$. The extensions $E/L_0$ and $L_1/L_0$ are both quadratic subextensions of $L_{i+1}/L_0$, but they are distinct since they correspond respectively to the distinct subgroups $C_{2^i} \times \{ 1 \}$ and $C_{2^{i-1}} \times C_2$ of $\Gal(L_{i+1}/L_0) = C_{2^i} \times C_2$. On the other hand, the field $E$ is contained in $L_{i+1}$ and thus has only one quadratic subfield according to \ref{A1}, so that the same reasoning as in step 3 above shows that $\Gal(E/\Q)$ is isomorphic to $\GL_2(\Fl)/S_1$. But then $E$ has a $C_4$-subfield, which can only be $\kappa_2$, and so $E \supseteq \kappa_2 L_0 = L_1$, hence $E = L_1$ since they are both quadratic extensions of $L_0$, a contradiction.

This shows that $\Gal(L_{i+1}/L_0)$ cannot be isomorphic to $C_{2^i} \times C_2$, so must be isomorphic to $C_{2^{i+1}}$. It follows that $\Gal(L_{i+1}/\Q)$ is an extension of $\PGL_2(\Fl)$ by $\Gal(L_{i+1}/L_0) \simeq C_{2^{i+1}}$, and the induction is complete.

\bigskip
\noindent\framebox{\textbf{Step 5: $\Gal(L_i/\Q)$ is a \emph{central} extension of $\PGL_2(\Fl)$}}
\bigskip

We shall now prove by induction on $i$ that the extension
\[ 1 \longrightarrow C_{2^i} \longrightarrow \Gal(L_i/\Q) \longrightarrow \PGL_2(\Fl) \longrightarrow 1 \]
is central. Note that it is so for $i=1$ since $\Aut(C_2)$ is trivial.

Let $i \geqslant 2$, and assume on the contrary that this extension is not central. Since $\Aut(C_{2^i}) \simeq C_{2^{i-1}}$ is Abelian, the morphism $\PGL_2(\Fl) \longrightarrow \Aut(C_{2^i})$ expressing the conjugation action of $\PGL_2(\Fl)$ on $C_{2^i}$ factors through $\PGL_2(\Fl)^{\ab} = \PGL_2(\Fl)/\PSL_2(\Fl) \simeq C_2$, so that $\PSL_2(\Fl)$ acts trivially whereas there exists an involution $\phi$ of $C_{2^i}$ such that $g x g^{-1} = \phi(x)$ for all $g \in \PGL_2(\Fl) - \PSL_2(\Fl)$ and $x \in C_{2^i}$. If we identify $C_{2^i}$ with $\Z/2^i\Z$, then by induction hypothesis this involution induces the identity on $\Z/2^{i-1}\Z$, so it must be $x \mapsto (1+2^{i-1})x$.

There is thus only one possible non-trivial conjugation action of $\PGL_2(\Fl)$. In order to compute $H^2\big(\PGL_2(\Fl),C_{2^i}\big)$ for this non-trivial action, we use the inflation-restriction exact sequence (cf. \cite[proposition VII.6.5]{Serre_LocalFields})
\begin{equation*}
0 \longrightarrow H^2(C_2,C_{2^i}) \overset{\Inf}{\longrightarrow} H^2\big(\PGL_2(\Fl),C_{2^i}\big) \overset{\Res}{\longrightarrow} H^2\big(\PSL_2(\Fl),C_{2^i}\big).
\label{Eq_InflRes} \tag{$\dagger$}
\end{equation*}
This is legitimate since, as $\PSL_2(\Fl)$ acts trivially, we have
\[ H^1\big(\PSL_2(\Fl),C_{2^i}\big) = \Hom\big(\PSL_2(\Fl),C_{2^i}\big) = 0 \]
since $\PSL_2(\Fl)$ is simple.

On the one hand, since $C_2=\{1,\varepsilon\}$ is cyclic, the groups $H^q(C_2,M)$ are the cohomology groups of the complex
\[ 0 \longrightarrow M \overset{\varepsilon-1}{\longrightarrow} M \overset{\varepsilon+1}{\longrightarrow} M \overset{\varepsilon-1}{\longrightarrow} M \overset{\varepsilon+1}{\longrightarrow} \cdots \]
for any $C_2$-module $M$ (cf. \cite[chapter XX exercise 16]{LangAlgebra}). In particular,
\[ H^2(C_2,C_{2^i}) = \frac{\ker(\varepsilon-1)}{\Im(\varepsilon+1)} = \frac{(\Z/2^i\Z)[2^{i-1}]}{(2+2^{i-1})(\Z/2^i\Z)} \simeq \left\{ \begin{array}{ll} C_2, & i=2, \\ 0, & i \geqslant 3. \end{array} \right. \]

On the other hand, as $\PSL_2(\Fl)$ acts trivially, the group $H^2\big(\PSL_2(\Fl),C_{2^i}\big)$ can be computed by using the split exact sequence \eqref{Eq_H2split}. As $\PSL_2(\Fl)^{\ab}=\{1\}$ since $\PSL_2(\Fl)$ is simple, and as the Schur multiplier is
\[ H^2\big(\PSL_2(\Fl),\C^*\big) \simeq C_2 \]
(Steinberg, cf. \cite[theorem 7.1.1.(ii)]{Karpi}), it follows that 
\[ H^2\big(\PSL_2(\Fl),C_{2^i}\big) \simeq C_2. \]
Let $2^i \! \PSL_2(\Fl)$ denote the non-trivial extension. One has
\[ 2 \! \PSL_2(\Fl) \simeq \SL_2(\Fl), \]
and the non-trivial element of $H^2\big(\PSL_2(\Fl),C_{2^i}\big)$ is the image of the non-trivial element $\gamma_{\SL_2} \in H^2\big(\PSL_2(\Fl),C_2\big)$ corresponding to $\SL_2(\Fl)$ by the map
\[ H^2\big(\PSL_2(\Fl),C_2\big) \longrightarrow H^2\big(\PSL_2(\Fl),C_{2^i}\big) \]
induced by the embedding of $C_2$ into $C_{2^i}$.

Consider the inflation-restriction exact sequence \eqref{Eq_InflRes}, and let
\[ \beta \in H^2\big(\PGL_2(\Fl),C_{2^i}\big) \]
be the cohomology class corresponding to the extension
\[ 1 \longrightarrow C_{2^i} \longrightarrow \Gal(L_i/\Q) \longrightarrow \PGL_2(\Fl) \longrightarrow 1. \]
If $\gamma = \Res \beta \in H^2\big(\PSL_2(\Fl),C_{2^i}\big)$ were trivial, then $\beta = \Inf \alpha$ would be the inflation of some $\alpha \in H^2\big(C_2,C_{2^i}\big)$, so that $\Gal(L_i/\Q)$ would be isomorphic to the fibred product $G_\alpha \underset{C_2}{\times} \PGL_2(\Fl)$, where $G_\alpha$ is the group extension
\[ 1 \longrightarrow C_{2^i} \longrightarrow G_\alpha \longrightarrow C_2 \longrightarrow 1 \]
corresponding to $\alpha$. Actually, if $i\geqslant 3$, then $\beta = \Inf \alpha$ would be trivial since $H^2\big(C_2,C_{2^i}\big)=0$, so that $\Gal(L_i/\Q)$ would be isomorphic to the semi-direct product
\[ C_{2^i} \rtimes \PGL_2(\Fl), \]
whereas if $i=2$, then $H^2\big(C_2,C_{2^i}\big)\simeq C_2$, so that $\Gal(L_2/\Q)$ would be isomorphic either to $C_4 \rtimes \PGL_2(\Fl)$ or to $Q_8 \underset{C_2}{\times} \PGL_2(\Fl)$, where $Q_8$, the quaternionic group $\{ \pm 1, \pm i, \pm j, \pm k \}$, is the extension
\[ 1 \longrightarrow C_4 \longrightarrow Q_8 \longrightarrow C_2 \longrightarrow 1 \]
corresponding to the non-trivial element of $H^2(C_2,C_4)$. However, the abelianisations
\[ \Big(C_{2^i} \rtimes \PGL_2(\Fl) \Big)^{\ab} \simeq C_{2^{i-1}} \times C_2 \]
and
\[ \big( Q_8 \underset{C_2}{\times} \PGL_2(\Fl) \big)^{\ab} \simeq C_2 \times C_2 \]
contradict \ref{A1}.

It follows that $\gamma = \Res \beta \in H^2\big(\PSL_2(\Fl),C_{2^i}\big)$ cannot be trivial, so it must be $\gamma_{\SL_2} \in H^2\big(\PSL_2(\Fl),C_2\big)$ followed by the embedding of $C_2$ into $C_{2^i}$. Let $g = \left[ \begin{smallmatrix} 1 & 0 \\ 0 & -1 \end{smallmatrix} \right] \in \PGL_2(\Fl)$. As $\ell \equiv 1 \bmod 4$, $g$ lies in $\PSL_2(\Fl)$, and since the only element of order $2$ of $\SL_2(\Fl)$ is $\left[ \begin{smallmatrix} -1 & 0 \\ 0 & -1 \end{smallmatrix} \right]$, $g$ cannot be lifted to an element of order $2$ of $\SL_2(\Fl)$, so that $\gamma_{\SL_2}(g,g) \neq 0$ by lemma \ref{Ex_lift_ord2}. On the other hand, since $g$ is the image of the complex conjugation (with respect to some embedding of $L_0$ into $\C$) by the projective Galois representation $\rho^{\text{proj}}$, it must lift to an element of order $2$ of $\Gal(L_i/\Q)$, which is contradictory: in the extension $\Gal(L_i/\Q)$, seen as the set $\Z/{2^i}\Z \times \PGL_2(\Fl)$ endowed with the group law
\[ (x_1,g_1) \cdot (x_2,g_2) = \big(x_1 + g_1 \cdot x_2 + \beta(g_1,g_2), g_1 g_2 \big), \]
we compute that
\[ (x,g)\cdot(x,g) = \big( x+g \cdot x + \beta(g,g), g^2 \big) = \big( \beta(g,g), 1 \big) \]
as $g \in \PSL_2(\Fl)$ acts trivially, so $\beta(g,g)$ must be zero, but $\beta(g,g) = \gamma_{\SL_2}(g,g) \neq 0$ since $g \in \PSL_2(\Fl)$.

It is therefore impossible that the extension
\[ 1 \longrightarrow C_{2^i} \longrightarrow \Gal(L_i/\Q) \longrightarrow \PGL_2(\Fl) \longrightarrow 1 \]
be not central, which completes the induction.

\bigskip
\noindent\framebox{\textbf{Step 6: $\Gal(L_i/\Q) \simeq \GL_2(\Fl)/S_i$}}
\bigskip

We may now apply again theorem \ref{Thm_Quer} to $\Gal(L_r/\Q)$. Part (iv) of this theorem combined with \ref{A1} means that $\Gal(L_r/\Q)$ cannot be isomorphic to $C_{2^r} \times \PGL_2(\Fl)$ nor to $2^r_{-} \! \PGL_2(\Fl)$. It cannot be isomorphic to $2^r_{\det} \! \PGL_2(\Fl)$ either, else $L_r$ would have a $C_{2^{r+1}}$-subfield by part (iv) of theorem \ref{Thm_Quer}, which would be contained in the cyclotomic extension $\Q(\mu_\ell)$ according to \ref{A1}, but this would contradict the definition of $r$.
Therefore, $\Gal(L_r/\Q)$ must therefore be isomorphic to $2^r_{+} \! \PGL_2(\Fl)$.

Besides, the same reasoning applies to $\saint L_r$, whose Galois group is isomorphic $\GL_2(\Fl)/S_r$ since $\det \saint \rho$ is by assumption an odd power of the mod $\ell$ cyclotomic character. Therefore, we have
\[ \Gal(L_r/\Q) \simeq 2^r_{+} \! \PGL_2(\Fl) \simeq \Gal(\saint L_r / \Q) \simeq \GL_2(\Fl)/S_r, \]
and the proof of part (i) of theorem \ref{thm_preuve} is now complete.

\begin{rk}\label{Rk_WhoIsGL_S}
We can now go back down the quadratic tower $L_r/ \cdots / L_0$ and see that $\Gal(L_i/\Q) \simeq \GL_2(\Fl)/S_i$ for all $i$. Besides, it is easy to see that the abelianisation of $\GL_2(\Fl)/S_i$ is
\[ \det \colon \GL_2(\Fl)/S_i \longrightarrow \Flx/S_i^2, \]
and since $S_i^2 = S_{i+1} \subsetneq S_i$ for $i < r$ whereas $S_r^2 = S_r$ as $-1 \not \in S_r$, theorem~\ref{Thm_Quer} part~(iv) leads to the unified formula
\[ \Gal(L_i/\Q) \simeq \GL_2(\Fl)/S_i \simeq \left\{ \begin{array}{ll} \PGL_2(\Fl), & i=0, \\ 2^i_{\det} \! \PGL_2(\Fl), & 0 < i < r, \\ 2^r_+ \! \PGL_2(\Fl), & i=r, \end{array} \right. \]
which is valid for $\ell \equiv 1 \bmod 4$ as well as $\ell \equiv -1 \bmod 4$. This allows us to identify for each $i$ the extension $\GL_2(\Fl)/S_i$ of $\PGL_2(\Fl)$ amongst the ones listed in part (ii) of theorem \ref{Thm_Quer}.
\end{rk}

\subsubsection{Certification of the Galois action}\label{Certif_Galois_action}

At this point, we have proved that $\Gal(L_i/\Q)$ is abstractly isomorphic to $\GLFl/S_i$ for each $0 \leqslant  i \leqslant r$, but only for $i=0$ do we know that it is permutation-isomorphic to $\GLFl/S_i$ acting naturally on $V_i = V/S_i$. For each $i>0$, we will now determine an isomorphism between $\Gal(L_i/\Q)$ and $\GLFl/S_i$ and a bijection $\theta_i \colon Z_i \overset{\sim}{\longrightarrow} V_i$ which make the Galois action on $Z_i$ permutation-isomorphic to the natural action of $\GLFl/S_i$  on $V_i$ in a compatible way as $i$ varies. This data can then be used to compute the Dokchitsers' resolvents $\Gamma_C(x)$, and thus to compute trace of Frobenius elements, in a certified way. 
 
Let us first fix an isomorphism $\varphi_r$ from the $\Gal(L_r/\Q)$ to $\GLFl/S_r$. Since the Galois groups $\Gal(L_i/\Q)$ are isomorphic to $\GLFl/S_i$ as extensions of $\PGL_2(\Fl)$ in a compatible way, $\varphi_r$ induces a system of isomorphisms
\[ \big(\varphi_i \colon \Gal(L_i/\Q) \simeq \GLFl/S_i \big)_{0\leqslant i \leqslant r} \]
such that the diagram
\[ \xymatrix{
\Gal(L_r/\Q) \ar@{->>}[r] \ar[d]_{\varphi_r}^{\wr} & \cdots \ar@{->>}[r] & \Gal(L_{i+1}/\Q) \ar[d]_{\varphi_{i+1}}^{\wr} \ar@{->>}[r] & \Gal(L_i/\Q) \ar[d]_{\varphi_i}^{\wr} \ar@{->>}[r] & \cdots \ar@{->>}[r] & \Gal(L_0/\Q) \ar[d]_{\varphi_0}^{\wr} \\
\GLFl/S_r \ar@{->>}[r] & \cdots \ar@{->>}[r] & \GLFl/S_{i+1} \ar@{->>}[r] & \GLFl/S_i \ar@{->>}[r] & \cdots \ar@{->>}[r] & \PGL_2(\Fl) \\ 
} \]
commutes. We choose $\varphi_r$ such that the induced isomorphism
\[ \varphi_0 \colon \Gal(L_0/\Q) \simeq \PGL_2(\Fl) \]
agrees with the one we determined with the help of \cite{Magma} in section \ref{GalProj}, and we will use the isomorphisms $\varphi_i$ to identify $\Gal(L_i/\Q)$ with $\GLFl/S_i$ from now on.

Since, by section \ref{GalProj}, the action of $\Gal(L_0/\Q)$ on $Z_0$ is equivalent to the natural action of $\PGL_2(\Fl)$ on $\P^1(\Fl)$, we know that the stabiliser of a root of $F_0(x)$ is conjugate to a group of upper triangular matrices in $\PGL_2(\Fl)$. Therefore, the stabiliser of a root of $F_1(x)$ is a subgroup of index $2$ of the subgroup of upper triangular matrices in $\GLFl / S_1$.

\begin{lem}\label{Lem_sub2Borel}
Let $B$ be a subgroup of $\GLFl$ of the form
\[ B = \left\{ \left[ \begin{smallmatrix} s & x \\ 0 & s' \end{smallmatrix} \right] \ \big\vert \ s \in S, s' \in S', x \in \Fl  \right\}, \]
where $S, S' \leqslant \Flx$ are subgroups of the multiplicative group of $\Fl$. If neither $S$ nor $S'$ is reduced to $\{1\}$, then $B$ has exactly $3$ subgroups of index $2$, namely
\begin{eqnarray*}
\left\{ \left[ \begin{smallmatrix} s & x \\ 0 & s' \end{smallmatrix} \right] \ \big\vert \ s \in S^2 \right\}, \\
\left\{ \left[ \begin{smallmatrix} s & x \\ 0 & s' \end{smallmatrix} \right] \ \big\vert \ s' \in S'^2 \right\}, \\
\text{and } \left\{ \left[ \begin{smallmatrix} s & x \\ 0 & s' \end{smallmatrix} \right] \ \big\vert \ s \in S^2 \Leftrightarrow s' \in S'^2 \right\}, \\
\end{eqnarray*}
where we write $S^2$ for $\{s^2, s \in S\}$, and similarly for $S'^2$.
\end{lem}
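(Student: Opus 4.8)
The plan is to reduce the statement to an elementary computation of the abelianisation of $B$. Any subgroup of index $2$ is normal with quotient isomorphic to $C_2$, so the subgroups of index $2$ of $B$ correspond bijectively, via taking kernels, to the nontrivial group homomorphisms $B \longrightarrow C_2$; it therefore suffices to show that there are exactly three such homomorphisms and that their kernels are the three subgroups listed. Let $U = \left\{ \left[ \begin{smallmatrix} 1 & x \\ 0 & 1 \end{smallmatrix} \right] \ \big\vert \ x \in \Fl \right\}$, a normal subgroup of $B$ isomorphic to the additive group of $\Fl$, hence of odd order $\ell$, with quotient $B/U \simeq S \times S'$ via $\left[ \begin{smallmatrix} s & x \\ 0 & s' \end{smallmatrix} \right] \mapsto (s,s')$. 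Since $U$ has odd order, any homomorphism $B \to C_2$ kills $U$, and hence factors through the abelian group $B/U$.

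Next I would pin down the abelianisation exactly, by showing that $[B,B] = U$. The inclusion $[B,B] \subseteq U$ is immediate since $B/U$ is abelian. For the reverse inclusion I would use the hypothesis that (say) $S \neq \{1\}$: choosing $s \in S$ with $s \neq 1$, the commutator of $\left[ \begin{smallmatrix} s & 0 \\ 0 & 1 \end{smallmatrix} \right]$ with $\left[ \begin{smallmatrix} 1 & x \\ 0 & 1 \end{smallmatrix} \right]$ equals $\left[ \begin{smallmatrix} 1 & (s-1)x \\ 0 & 1 \end{smallmatrix} \right]$, and since $s-1 \neq 0$ these elements exhaust $U$ as $x$ ranges over $\Fl$ (one could use $S'$ in place of $S$). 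Thus $B^{\ab} \simeq S \times S'$, a product of two cyclic groups, and the homomorphisms $B \to C_2$ are identified with the elements of $\Hom(S,C_2) \times \Hom(S',C_2)$.

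It then remains to count and identify these. A finite cyclic group admits a nontrivial homomorphism to $C_2$ precisely when its order is even, and in that case the homomorphism is unique and its kernel is the unique subgroup of index $2$, namely the subgroup of squares. Here both $S$ and $S'$ have even order — in the situations where the lemma is applied they contain $-1$, as $\Flx$ is cyclic of even order — which is what makes the number of homomorphisms equal to $3$ rather than $1$. Hence $\Hom(S,C_2)$ and $\Hom(S',C_2)$ are each of order $2$; writing $\chi_S$ and $\chi_{S'}$ for their nontrivial elements, so that $\chi_S(s) = 0 \Leftrightarrow s \in S^2$ and likewise for $S'$, the three nontrivial homomorphisms $B \to C_2$ send $\left[ \begin{smallmatrix} s & x \\ 0 & s' \end{smallmatrix} \right]$ respectively to $\chi_S(s)$, to $\chi_{S'}(s')$, and to $\chi_S(s) + \chi_{S'}(s')$. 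Reading off their kernels gives exactly the three subgroups in the statement: $\{\, s \in S^2 \,\}$, $\{\, s' \in S'^2 \,\}$, and $\{\, s \in S^2 \Leftrightarrow s' \in S'^2 \,\}$.

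The argument is routine once set up this way, and I do not anticipate any serious obstacle. The only two points that deserve care are the identification $[B,B] = U$ (which is the one place the hypothesis $S \neq \{1\}$, or $S' \neq \{1\}$, is genuinely needed) and the observation that $S$ and $S'$ have even order; everything else is bookkeeping with characters of cyclic groups.
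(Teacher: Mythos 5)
Your proposal is correct and is essentially the paper's own argument: index-$2$ subgroups are kernels of nontrivial homomorphisms to $C_2$, an explicit commutator such as $\left[\begin{smallmatrix} s & 0 \\ 0 & 1 \end{smallmatrix}\right]\left[\begin{smallmatrix} 1 & x \\ 0 & 1 \end{smallmatrix}\right]\left[\begin{smallmatrix} s & 0 \\ 0 & 1 \end{smallmatrix}\right]^{-1}\left[\begin{smallmatrix} 1 & x \\ 0 & 1 \end{smallmatrix}\right]^{-1}=\left[\begin{smallmatrix} 1 & (s-1)x \\ 0 & 1 \end{smallmatrix}\right]$ shows every such homomorphism factors through $B^{\ab}\simeq S\times S'$, and cyclicity of $S$ and $S'$ then gives the three kernels listed. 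Your side remark is also well taken: getting exactly three subgroups (and the listed sets being of index $2$ at all) genuinely requires $S$ and $S'$ to have even order, which the hypothesis that they be nontrivial does not literally guarantee but which does hold in every situation where the lemma is applied, a point the paper's own proof passes over silently in its final sentence.
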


\begin{proof}
Since a subgroup of index $2$ is always normal, such a subgroup is the kernel of a non-trivial morphism from $B$ to $C_2$. As the latter group is Abelian, such a morphism factors through the abelianisation of $B$. Let $s \in S$, $s \neq 1$. The identity $ghg^{-1}h^{-1}=\left[ \begin{smallmatrix} 1 & 1-s \\ 0 & 1 \end{smallmatrix} \right]$ where $g = \left[ \begin{smallmatrix} 1 & 1 \\ 0 & 1 \end{smallmatrix} \right]$, $h = \left[ \begin{smallmatrix} s & 0 \\ 0 & 1 \end{smallmatrix} \right] \in B$ shows that $\left[ \begin{smallmatrix} 1 & 1 \\ 0 & 1 \end{smallmatrix} \right]$ is a commutator in $B$, so that the abelinanisation of $B$ is 
\[ \begin{array}{ccc}
B & \longrightarrow & S \times S' \\ \left[ \begin{smallmatrix} s & x \\ 0 & s' \end{smallmatrix} \right] & \longmapsto & (s,s').
\end{array} \]
Therefore, we have canonically
\[ \Hom( B, C_2) \simeq \Hom( S \times S', C_2) \simeq \Hom( S, C_2) \times \Hom( S', C_2). \]
Since $S$ and $S'$ are cyclic because $\Flx$ is, the result follows.
\end{proof}

According to this lemma, the stabiliser of a root of $F_1(x)$ in $\Gal(L_1/\Q)$ could be either 
\begin{eqnarray*}
H_+ = \left\{ \left[ \begin{smallmatrix} s & x \\ 0 & s' \end{smallmatrix} \right] \ \big\vert \ s \in \Flx^2, s' \in \Flx, x \in \Fl \right\} / S_1, \\
H_- = \left\{ \left[ \begin{smallmatrix} s & x \\ 0 & s' \end{smallmatrix} \right] \ \big\vert \ s \in \Flx, s' \in \Flx^2, x \in \Fl \right\} / S_1,
\\ \text{or }H_0 = \left\{ \left[ \begin{smallmatrix} s & x \\ 0 & s' \end{smallmatrix} \right] \ \big\vert \ s, s' \in \Flx, x \in \Fl, ss' \in \Flx^2 \right\} / S_1.
\end{eqnarray*}

However, the nontrivial element $\left[ \begin{smallmatrix} \varepsilon & 0 \\ 0 & \varepsilon \end{smallmatrix} \right] \in \GLFl/S_1$, where $\varepsilon \in \Flx / \Flx^2$, is central and lies in $H_0$, so it lies in the intersection of the conjugates of $H_0$, so that the action of $\GLFl/S_1$ on its $H_0$-cosets is not faithful. Therefore, the stabiliser of a root of $F_1(x)$ must be conjugate either to $H_+$ or to $H_-$.

Consider now the compatible collection of involutory automorphisms
\[ \begin{array}{rcl}
\Psi_i \colon \GLFl/S_i & \longrightarrow & \GLFl/S_i \\ A & \longmapsto & \frac1{\det A} A.
\end{array} \]
Since $\Psi_0$ is the identity on $\PGL_2(\Fl)$, we may replace the isomorphisms $\varphi_i$ with $\Psi_i \circ \varphi_i$ without breaking the compatibility with the identification of $\Gal(L_0/\Q)$ with $\PGL_2(\Fl)$ made in section \ref{GalProj}, and since $\Psi_1$ swaps $H_+$ and $H_-$, we may assume without loss of generality that the stabiliser of a root of $F_1(x)$ is conjugate to $H_+$.

An induction on $i$ then reveals that the stabiliser in $\Gal(L_i/\Q)$ of a root of $F_i(x)$ is conjugate to
\[ \left\{ \left[ \begin{smallmatrix} s & x \\ 0 & y \end{smallmatrix} \right] \ \big\vert \ s \in S_i, y \in \Flx, x \in \Fl \right\} / S_i. \]
Indeed, at each step of the induction, lemma \ref{Lem_sub2Borel} gives us 3 possibilities, but only one of them yields a faithful action of $\GLFl/S_i$ on its cosets, for the same reason as above.

\bigskip

As a consequence, we now know that for each $i$ there exists a bijection
\[ \theta_i \colon Z_i \overset{\sim}{\longrightarrow} V_i \]
which makes the Galois action on $Z_i$ equivalent to the natural action of $\GLFl/S_i$ on $V_i$, so we have proved part (ii) of theorem \ref{thm_preuve}. However, we must make the indexation $\theta_r$ of $Z_r$ by $V_r$ explicit, so as to be able to proceed with the computation of the Dokchitsers' resolvents $\Gamma_C(x)$. We do so as follows.

\subsubsection{Recovering the indexation of the $p$-adic roots}

Recall that we have fixed a large prime $p \in \N$ such that $F_r(x)$ mod $p$ is irreducible. Consider the field $\overline K_{r} = \F_p[t] / F_r(t)$. The $t^{p^j}$, $0 \leqslant j < 2^r(\ell+1)$, are the roots of $F_r$ in $\overline K_{r}$, and so by the hypothesis we have made on the relation between the roots of $F_i(x)$ and the ones of $F_{i+1}(x)$, all the polynomials $F_i(x)$ are squarefree and split completely over $\overline K_{r}$. Let $\overline Z_{i}$ be the set of the roots of $F_i(x)$ in $\overline K_{r}$, so that we have\footnote{Although we certainly have such projections maps in characteristic zero, it might happen that these maps are not well-defined anymore in characteristic $p$. However, as $p$ is large, this problem should not occur for us.} 2-to-1 projection maps $\xymatrix{ \overline \varpi_{i} \colon \overline Z_{i+1} \ar@{->>}[r] & \overline Z_{i}}$ such that for all $z \in \overline Z_{i+1}$, there exists a unique $z' \in \overline Z_{i+1}$ such that $z+z' = \overline \varpi_{i}(z) \in \overline Z_{i}$.

In section \ref{GalProj}, \cite{Magma} computed for us the Galois group $\Gal(L_0/\Q)$ as a permutation group on the roots of $F_0(x)$ in some extension $M$ of $\F_p$, which unfortunately is not isomorphic\footnote{Indeed, unlike $\overline K_r$,  $M$ is an extension of $\F_p$ of degree $\ell+1 = \deg F_0(x)$. To make things worse, curiously Magma does not construct $M$ as $\F_p[t]/F_0(t)$ but as $\F_p[t]/G(t)$ instead, where $G(t)$ is a sparse polynomial of degree $\ell+1$ which it cooks up.} to $\overline K_r$. Magma also gave us an indexation $(m_P)_{P \in \P^1(\Fl)}$ of these roots, and we would like to transfer this indexation to $\overline Z_{0} \subset \overline K_r$ while keeping compatibility with the action of $\Gal(L_0/\Q)= \PGLFl$. We do so by computing mod $p$ the factors
\[ R_{4,P}(x) = \prod_{\substack{P_1,P_2,P_3,P_4 \in \P^1(\Fl) \\ \text{pairwise distinct} \\ [P_1,P_2,P_3,P_4]=P}} \left( x - \sum_{i=1}^{4} \lambda_i m_{P_i} \right) \in \F_p[x] \]
of the resolvent $R_4(x)$ from section \ref{GalProj} for each $P \in \P^1(\Fl) - \{ \infty, 0, 1\}$, where $[\cdot,\cdot,\cdot,\cdot]$ denotes the cross-ratio and the $(\lambda_i)_{1\leqslant i \leqslant 4}$ are fixed distinct integers chosen so that these polynomials are pairwise coprime mod $p$. Although we did mention that the resolvent $R_4(x)$ is horribly expensive to compute, computing these factors is much easier, for three reasons : they are merely factors and so their degree is much smaller, we compute them mod $p$ so the size of their coefficients is no longer a problem, and now we know that $\Gal(L_0/\Q) = \PGLFl$, it is rigorous to compute them by expanding the product that defines them instead of using resultants.

Then, since the action of $\PGLFl$ on $\P^1(\Fl)$ is $3$-transitive, we may index $3$ distinct arbitrarily chosen points $z_\infty$, $z_0$ and $z_1$ of $\overline Z_0$ respectively by $\infty$, $0$ and $1$, after what we index each remaining point $z \in \overline Z_{0}$ by the unique $P \in \P^1(\Fl)$ such that
\[ R_P(\lambda_1 z_\infty + \lambda_2 z_0 + \lambda_3 z_1 + \lambda_4 z) = 0. \]
Next, by looking at how the Frobenius of $\overline K_{r}$ permutes $\overline Z_{0}$, we may deduce which element $\overline \Phi \in \PGLFl$ it corresponds to.

Let now $z = z^{(r)} \in \overline Z_{r}$ be a fixed root of $F_r(x)$ in $\overline K_{r}$. By finding which other point of $\overline Z_{r}$ must be added to it to get a root $z^{(r-1)}$ of $F_{r-1}(x)$ mod $p$, then which point of $\overline Z_{r-1}$ must be added to this new root to get a root $z^{(r-2)}$ of $F_{r-2}(x)$ mod $p$, and so on until we get to $z^{(0)} \in \overline Z_{0}$, we can determine which point $P$ of $\P^1(\Fl)$ corresponds to $z$. We index this $z$ by a vector $v$ of $V_r$ whose reduction to $\P^1(\Fl)$ is $P$.

Now that we have indexed one root of $F_r(x)$, we index the other ones as follows: Let $\Phi$ be an arbitrary lift of $\overline \Phi \in \PGLFl$ to $\GLFl/S_r$. We know that the Frobenius of $\overline K_r$ acts as $\lambda \Phi$ for some $\lambda \in \Flx/S_r$. If we knew the value of $\lambda$, we would be able to complete the indexation of $\overline Z_{r}$ by $V_r$, since $z^{p^j}$ must be indexed by $(\lambda \Phi)^j v$ for all $j < 2^r (\ell+1)$. Each value of $\lambda$ thus corresponds to a candidate indexation of $\overline Z_{r}$ by $V_r$. In order to find out which is the correct one, we use the Dokchitsers' resolvents $\Gamma_C(x)$, albeit in an unusual way : we lift the elements of $\overline Z_{r}$ to some moderate $p$-adic precision in $\Q_p[t]/F_r(t)$, and we compute one coefficient of one of the resolvents $\Gamma_C(x)$ for each of these candidate indexations. The point is that we expect the correct indexation to yield a nice value, and the other ones to yield rubbish. Curiously, the wrong indexations yield values which are still rational over\footnote{This fact can be proved by a painful computation which we do not reproduce here.} $\Q_p$; however, in practice they will contradict archimedian bounds which can be derived from the modulus of the complex roots of $F_r(x)$, and so we can rigorously tell the right indexation apart from the wrong ones.

\begin{rk}
Let $\Gamma_C(x) = \prod_{\sigma \in C} \left( x - \sum_{z \in Z_{r}} \sigma(z) h(z) \right)$ be the resolvent whose coefficient we compute, where $h(x) \in \Z[x]$ and $C$ is a conjugacy class, and let $n=\# C$ be its degree. Clearly, the coefficients of $x^n$, of $x^{n-1}$ and of $x^0$ do not depend on the indexation and therefore give no information. Besides, in practice the height of the coefficient of $x^{n-i}$ is a roughly increasing function of $i$, so a good choice is to compute the coefficient of $x^{n-2}$, which can be done quickly by expanding the product to order $2$ at infinity.
\end{rk}

\begin{rk}
If $r$ is large, it may be better to determine the image of the Frobenius in $\GLFl/S_i$ inductively on $i=1,\cdots,r$, since this reduces the number of trials to perform from $2^r$ to $2r$. On the other hand, in practice $r$ is small (recall that $2^r < \ell$), so one may parallelise and treat all of the $2^r$ cases at once if one has enough cores to spare.
\end{rk}

\begin{rk} If we have some information about the trace or the determinant the image by $\rho$ of the Frobenius at $p$, we may make a partial prediction on which indexation is the correct one. However, we have not proved yet that the Galois-set $Z_r$ affords $\saint \rho^{S_r}$ and not another Galois representation, so to be rigorous we must try out all the possibilities.
\end{rk}

Once we know the correct indexation of $\overline Z_{r}$, we may compute the Dokchitsers' resolvents $\Gamma_C(x)$ by lifting $p$-adically the roots into $Z_{r}$. Indeed, we can deduce a bound on the necessary $p$-adic precision from archimedian bounds as above. We thus get a completely proved output. 

\subsection{Certification of the representation}

Either by the geometric approach (section \ref{Galois_geom}) or by the group cohomology one (section \ref{Galois_cohom}), we have now certified that the Galois action on the set $Z_r$ of roots of $F_r(x)$ affords a quotient Galois representation $\rho^{S_r}$, for which we are able to compute the image of the Frobenius element at $v$ for almost every prime $v \in \N$ thanks to the Dokchitsers' resolvents $\Gamma_C(x)$. We are now going to explain how to certify that this representation $\rho^{S_r}$ is equivalent to the expected representation $\saint \rho^{S_r}$.

By assumption, $\rho^{S_r}$ and $\saint \rho^{S_r}$ induce the same projective representation, so there exists a Galois character
\[ \psi \colon \Gal(\overline \Q / \Q) \longrightarrow \Flx/S_r \simeq \Z/2^r\Z \]
such that $\rho^{S_r} = \saint \rho^{S_r} \otimes \psi$. Let $(p_j)_{j \in J}$ be the primes at which $K_r$ ramifies. Since we expect $\rho^{S_r}$ to be equivalent to $\saint \rho^{S_r}$, these should be the same primes as the (known) ones at which $\saint \rho^{S_r}$ ramifies, and we assume that it is indeed the case. For each $j \in J$, let
\[ a_j = \left\{ \begin{array}{ll}
r+2 & \text{ if } p_j=2, \\
1 & \text{else},
\end{array} \right. \]
so that $\Z_{p_j}^* \otimes \Z/2^r\Z \simeq (\Z/p_j^{a_j}\Z)^* \otimes \Z/2^r\Z$ for all $j \in J$. Since $\psi$ is unramified outside the $p_j$ and assumes values in $\Z/2^r\Z$, it factors through $\Gal\big(\Q(\mu_N)/\Q\big)$, where $N = \prod_{j \in J} p_j^{a_j}$.

\pagebreak

It then suffices to find primes $v \in \N$
\begin{itemize}
\item which span $(\Z/N\Z)^* \otimes \Z/2^r\Z$,
\item for which the Dokchitser resolvents can\footnote{There are at most finitely many exceptions.} determine the trace in $\Fl/S_r$ of the image by $\rho^{S_r}$ of the Frobenius at $v$,
\item such that this trace is nonzero,
\item and which are small enough so that we can determine the trace of the image by $\saint \rho$ of the Frobenius at $v$ (for instance, if $\saint \rho=\saint \rho_{f,\l}$, we can compute the coefficients $a_v(f) \bmod \l$ using methods based on modular symbols).
\end{itemize}
If for each of these $v$ the trace is the same for $\rho^{S_r}$ and $\saint \rho^{S_r}$, this proves that $\psi$ is trivial, so that $\rho^{S_r}$ is equivalent to $\saint \rho^{S_r}$.

\begin{rk}\label{Rk_recover_L}
In particular, it then follows that the splitting field $L_r$ of $F_r(x)$ is indeed the field $\saint L_r$ cut out by $\saint \rho^{S_r}$. Besides, since the Galois representation $\saint \rho$ can be recovered from its quotient $\saint \rho^{S_r}$ and its determinant character $\det \saint \rho$, the field $\saint L$ cut out by $\saint \rho$ is the compositum of $L_r$ and of the field cut out by $\det \saint \rho$, which is by assumption a subfield of the cyclotomic field $\Q(\mu_\ell)$. Using the $\cite{gp}$ functions \verb?polsubcyclo? and \verb?polcompositum? to compute explicitly this latter field and then its compositum with $L_r$, we can thus easily compute a nice monic polynomial in $\Z[x]$ whose splitting field is $\saint L$. This is useful since, as explained in section \ref{Polred}, the polynomial $F(x) \in \Q[x]$ of degree $\ell^2-1$ computed by the algorithm described in \cite{Moi} is usually too big to be reduced directly.
\end{rk}

\newpage

\section{Application}\label{Tables}

Let $R$ be the set of couples $(f,\l)$, where $\l$ a prime ideal of degree 1 of the Hecke field\footnote{By \emph{Hecke field} of a newform, we mean the number field generated by its Fourier coefficients.} of $f$ lying above a prime number $\ell \leqslant 31$, and $f \in S_k(1)$ a newform of level $N=1$ and weight $k < \ell$, and let $R' \subsetneq R$ be the subset formed by the couples $(f,\l)$ such that the Galois representation $\saint \rho_{f,\l}$ attached to $f \bmod \l$ is not exceptional\footnote{So we exclude precisely $\Delta \bmod 23$ and $E_4 \Delta \bmod 31$.}.

For each $(f,\l)$ in $R'$, we have used the algorithm described in \cite{Moi} to compute a polynomial $F(x) \in \Q[x]$ supposedly attached to $\saint \rho_{f,\l}$. For $\ell \neq 17$, we have then reduced each of these data by the method presented in section \ref{Polred}, thus getting a collection of polynomials $F_i(x) \in \Z[x]$, $0 \leqslant i \leqslant r = \ord_2(\ell-1)$, and we have applied the group cohomology method described in sections \ref{GalProj} and \ref{Galois_cohom} to certify that these data do define the correct Galois representations. We have finally computed the Dokchitsers' resolvents corresponding to these representations, and we have used them to determine the image in $\GL_2(\F_\l)$ (up to similarity of course) of the Frobenius at $p$ by each of these representations for the 40 first primes $p \in \N$ above $10^{1000}$, so as to illustrate the fact that huge values of $p$ are not a problem for our algorithm. In particular, we have determined the value of $a_p(f) \bmod \l$ for such $p$. All of these certified data (the reduced polynomials $F_i(x)$ with their ordered roots, the Dokchitser's resolvents, and the tables of images of Frobenius elements) may be found on the author's \href{http://www2.warwick.ac.uk/fac/sci/maths/people/staff/mascot/galreps}{webpage}.

\begin{rk}
In \cite{Moi}, we noted that it took \cite{Sage} about $30$ minutes of CPU time to compute one coefficient $a_p \bmod \l$ for $p \approx 10^{1000}$ via our Galois representation data. As we reran the computations with the certified resolvents, we realised that \cite{gp} can do the same thing in less than $1$ minute. The reason for this is that \cite{Sage} takes the time to check rigorously that $p$ is prime before starting computations mod $p$, whereas \cite{gp} does not. Amusingly, this shows that it takes much more time to find a prime number $p$ of this size than to compute $a_p \bmod \l$ by the Galois representation method.

We have certified that the $40$ values of $p$ used in the tables below are indeed prime, because we are not sure what would happen if we ran our algorithm with a composite pseudoprime. As a result, the values of $a_p \bmod \l$ displayed in these tables are completely rigorous.
\end{rk}

\newpage

In order to give an idea of the size of the objects that our algorithms manipulate, we present here two cases extracted from the aforementioned tables. Instead of representing a similarity class in $\GL_2(\F_\l)$ by a matrix as we did in \cite{Moi}, we deemed it more elegant to give its \emph{minimal} polynomial in factored form over $\F_\l$. Since $\GL_2(\Fl)$ splits into similarity classes as follows, this is a faithful representation.

\[ \hspace{-1.5cm}
\begin{array}{|c|c|c|c|c|}
\hline
\text{Type of class} & \text{Representative} & \text{Minimal polynomial} & \text{\# of classes} & \text{\# of elements in class} \\
\hline
\text{Scalar} \phantom{\Bigg\vert} & \left[ \begin{array}{cc} \lambda & 0 \\ 0 & \lambda \end{array} \right] & x-\lambda & \ell-1 & 1 \\
\begin{array}{c} \text{Split} \\ \text{semisimple} \end{array} \phantom{\Bigg\vert} & \left[ \begin{array}{cc} \lambda & 0 \\ 0 & \mu \end{array} \right] & (x-\lambda)(x-\mu) & \frac{(\ell-1)(\ell-2)}2 & \ell(\ell+1) \\
\begin{array}{c} \text{Non-split} \\ \text{semisimple} \end{array} \phantom{\Bigg\vert} & \left[ \begin{array}{cc} 0 & -n \\ 1 & t \end{array} \right] & \begin{array}{c} x^2-tx+n \\ \text{irreducible over }\Fl \end{array} & \frac{\ell(\ell-1)}2 & \ell(\ell-1) \\
\text{Non-semisimple} \phantom{\Bigg\vert} & \left[ \begin{array}{cc} \lambda & 1 \\ 0 & \lambda \end{array} \right] & (x-\lambda)^2 & \ell-1 & (\ell+1)(\ell-1) \\
\hline
\end{array}
\]

\bigskip

\newpage

\noindent\textbf{Example 1:} $\Delta \bmod 29$.

\bigskip

It seems natural to start with an example with $f=\Delta=q-24q^2+252q^3+O(q^4)$, the most famous cuspform of all. While for $\ell=31$ we have $r=1$, for $\ell=29$ we have $r=2$, so the polynomials $\saint F_r(x)$ are more impressive for $\ell=29$ than for $\ell=31$. Here is the one corresponding to $\Delta \bmod 29$:

\rowcolors{2}{white}{white}
\begin{tiny}
\[ \hspace{-2cm} \arraycolsep=1.4pt\begin{array}{ccl} \saint F_2(x) & = &
x^{120} - 39 \, x^{119} + 52 \, x^{118} + 18802 \, x^{117} - 260738 \, x^{116} - 2224996 \, x^{115} + 78123651 \, x^{114} - 328828100 \, x^{113} - 8263917952 \, x^{112} \\
&& + 105418992285 \, x^{111} - 9281370047 \, x^{110} - 8673650394390 \, x^{109} + 67175813321912 \, x^{108} + 3240223696313 \, x^{107} - 3625273840703346 \, x^{106} \\
&& + 28868328866222299 \, x^{105} - 55712181926653112 \, x^{104} - 831213186859484809 \, x^{103} + 6400389530587512440 \, x^{102} + 5664948473704761298 \, x^{101} \\
&& - 236599099025809755837 \, x^{100} - 86149046526574607141 \, x^{99} + 18049361157398735512827 \, x^{98} - 143034171738473324654141 \, x^{97}  \\
&& + 309908279927036114408948 \, x^{96} + 4110452935977502930211262 \, x^{95} - 49808587507684086841613272 \, x^{94} + 255718390797761218980112249 \, x^{93} \\
&& - 370938232422515550238030706 \, x^{92} - 4239746526064029063336974560 \, x^{91} + 40059260137839079990324735682 \, x^{90} \\
&& - 205134100035408647490709294925 \, x^{89} +  690810959665321724654129463170 \, x^{88} - 1150913531696070804731460240641 \, x^{87} \\
&&  - 2905017526953691499670077418670 \, x^{86} + 47322659102097465506352390635856 \, x^{85} - 425792292478079616843046706314083 \, x^{84} \\
&& + 2739838234183913689504417826249525 \, x^{83} - 12377247662589064428784865815958075 \, x^{82} + 41296251300763242911291874924492236 \, x^{81} \\
&& - 86096254481992808573240127681847534 \, x^{80} - 174161987438617330069511957454948216 \, x^{79} + 3004945442865208465399646864785306007 \, x^{78} \\
&& - 19426609866780659578962841182962714865 \, x^{77} + 108199453121858544562274337695731535951 \, x^{76} \\
&& - 540562354485415170568171856724347249028 \, x^{75} + 2003170329279473549264139360014033008269 \, x^{74} \\
&& - 4906345350745852789161273456858421483526 \, x^{73} + 6852101959985515455407213317694533880854 \, x^{72} \\
&& + 21744835456542777978544010432017957570998 \, x^{71} - 354531601960104186814288045752985534837356 \, x^{70} \\
&& + 2415813767710375355007174048785369337370619 \, x^{69} - 11795476320637187447112847890157256430641818 \, x^{68} \\
&& + 51949786215458201865850168647651038718083533 \, x^{67} - 205837760707652251236618469331715307953868772 \, x^{66} \\
&& + 632794675891664554262532875475585224624885501 \, x^{65} - 1549984687081576409789267803107087061300626754 \, x^{64} \\
&& + 3780171680443736629265587788531817043101358021 \, x^{63} - 2032042888653854240770004273667014042737914619 \, x^{62} \\
&& - 75296586398944854033134144067268466018165634371 \, x^{61} + 492438774401604429008913700838759413140834029077 \, x^{60} \\
&& - 1872146628576921265301617989405459118651511828249 \, x^{59} + 7889534315510055163849348514205854835317146183354 \, x^{58} \\
&& - 37623219532998612719188117562544690312647851443329 \, x^{57} + 133715149099087666221878622209330023885832980173762 \, x^{56} \\
&& - 358527853259357643101016413194439711168998587653646 \, x^{55} + 1150214873720403752145704516777301458540259708566007 \, x^{54} \\
&& - 4251058748128336628769990060481020773188738825695702 \, x^{53} + 10642612653109338583300281664637819808188791020684468 \, x^{52} \\
&& - 17402914533613728148979826342208602338942607463119246 \, x^{51} + 48633429629872181118699939461795124668503022992755678 \, x^{50} \\
&& -165403276792631997282371651395087674782654230366714124 \, x^{49} + 145015997107909021398686766742679587247121061293408986 \, x^{48} \\
&& + 492392849280060573773565340461610525259317147507294865 \, x^{47} - 271511458296438382488111693610775002497465128417170394 \, x^{46}\\
&& - 652664619248620330391026643444817961046333282136405757 \, x^{45} - 16367594587199289948998686451709338569385261309703750822 \, x^{44} \\
&& + 44978511235283376299343780035953332879799842232519914312 \, x^{43} + 19646073668559858224023650929822622112934080573795228422 \, x^{42} \\
&& + 28535167429260816202303363626597519751307292203748180524 \, x^{41} - 498090822280959521158336743012213915583277009997639543769 \, x^{40} \\
&& - 940364373679220067932549479979755134636234011579427914542 \, x^{39} + 2521673052520748698612222377227238872725904760567919548740 \, x^{38} \\
&& + 7019283132304011272238795849686785307621156377148940945457 \, x^{37} + 12407898598890801572422838737227607844456571501921254925864 \, x^{36} \\
&& - 54774940542932812395031549315157134292675987516857162936933 \, x^{35} \\
&& - 167280160291743112243902528169268456978957939558833200506384 \, x^{34} \\
&& + 66685231231069675353959106828906025058508433889848745908446 \, x^{33} \\
&& + 1144200200071295796141746982232629332102662041133194625544527 \, x^{32} \\
&& + 1465380778516325802890225143289120143844003938597799565942015 \, x^{31} \\
&& - 4546042233752493082553255798793744033071375504699352571051582 \, x^{30} \\
&& - 12691048529690820177670723551290387902258432599474582511011324 \, x^{29} \\
&& + 5219645215184371778852291796118549498037264765670011997356903 \, x^{28} \\
&& + 59536146913870227752311679132874695245690076312069901091973737 \, x^{27} \\
&& + 42271202746576508837242051054585488179771161211530729060009727 \, x^{26} \\
&& - 167593661120219565661536403962471583120422676161951086004048721 \, x^{25} \\
&& - 286368937487543599711899983016552475758462484909274064469481002 \, x^{24} \\
&& + 230382055771017547055677721234005290186180568652972820922049224 \, x^{23} \\
&& + 928283302209877157721534651901436783095651772196213609374878685 \, x^{22} \\
&& + 175585932223464736559299592405845533688516285207784943808278420 \, x^{21} \\
&& - 1758850016954365463305055994507463367031764582472365647306994534 \, x^{20} \\
&& - 1465327287102397863683326389027330201118347359802335300172559328 \, x^{19} \\
&& + 1773321220836307165702143644634692168610741013365613960356877087 \, x^{18} \\
&& + 2904606733860530703041514422127534636066546248303444459223252869 \, x^{17} \\
&& - 520308669130339394544399063835249522615387011157258025834606131 \, x^{16} \\
&& - 2906947132318789204808524108533368321356173905644648961284835769 \, x^{15} \\
&& - 393534993004425879883701416875089550520476893473247289746770881 \, x^{14} \\
&& + 2113255440095432232134067491875625170919662276031515339003865608 \, x^{13} \\
&& + 343521455053064377858576614861077606598382997902674984475727361 \, x^{12} \\
&& - 1980733816420089301985076580314504281378403676364093859856750280 \, x^{11} \\
&& - 841423938599508546949037276545037161554893873562770775547347936 \, x^{10} \\
&& + 1511611164721597762311281100747394082476044535180259343320913007 \, x^{9} \\
&& + 1865894071033615040665160647561792975872738246766682774064852296 \, x^{8} \\
&& + 887398778985804089226899981553259732564931621689808536397397622 \, x^{7} \\
&& + 327959598838061445269659568556871680486016836452609211222699063 \, x^{6} \\
&& + 280807031529596339466111600718026859424625249954985059771350709 \, x^{5} \\
&& + 234434262697623313809637590557065036950844063730534986852355367 \, x^{4} \\
&& + 128418383859788691330267355023441549682203671844754849186711248 \, x^{3} \\
&& + 47862235923713816575492173460515921299171434171423149409051143 \, x^{2} \\
&& + 7941532444376844604785215172809295246343317508709928231445127 \, x \\
&& - 804139180569965777035407848426442222962300357108066928039835.
\end{array} \]\end{tiny}

\newpage

The images of the Frobenius elements are the following:

\bigskip

\rowcolors{2}{gray!25}{white}
\[ \begin{array}{|ccc|}
\hline
\rowcolor{gray!50}
p & \phantom{abcdef} \vphantom{\bigg\vert}\saint \rho_{\Delta,29}(\operatorname{Frob}_p)  \phantom{abcdef} & \tau(p) \bmod 29 \\
10^{1000}+453 & \TableMinPolyVar^2 + 8\TableMinPolyVar + 24 & 21 \\
10^{1000}+1357 & \TableMinPolyVar^2 + 21\TableMinPolyVar + 1 & 8 \\
10^{1000}+2713 & \TableMinPolyVar^2 + 18\TableMinPolyVar + 20 & 11 \\
10^{1000}+4351 & \TableMinPolyVar^2 + 3 & 0 \\
10^{1000}+5733 & (\TableMinPolyVar-20)(\TableMinPolyVar-2) & 22 \\
10^{1000}+7383 & (\TableMinPolyVar-19)(\TableMinPolyVar-10) & 0 \\
10^{1000}+10401 & (\TableMinPolyVar-7)(\TableMinPolyVar-2) & 9 \\
10^{1000}+11979 & \TableMinPolyVar^2 + 22\TableMinPolyVar + 22 & 7 \\
10^{1000}+17557 & \TableMinPolyVar^2 + 27 & 0 \\
10^{1000}+21567 & (\TableMinPolyVar-23)(\TableMinPolyVar-3) & 26 \\
10^{1000}+22273 & \TableMinPolyVar^2 + 15\TableMinPolyVar + 3 & 14 \\
10^{1000}+24493 & \TableMinPolyVar^2 + 25\TableMinPolyVar + 16 & 4 \\
10^{1000}+25947 & (\TableMinPolyVar-27)(\TableMinPolyVar-15) & 13 \\
10^{1000}+27057 & \TableMinPolyVar^2 + 22\TableMinPolyVar + 23 & 7 \\
10^{1000}+29737 & (\TableMinPolyVar-23)(\TableMinPolyVar-10) & 4 \\
10^{1000}+41599 & (\TableMinPolyVar-13)(\TableMinPolyVar-5) & 18 \\
10^{1000}+43789 & (\TableMinPolyVar-18)(\TableMinPolyVar-15) & 4 \\
10^{1000}+46227 & \TableMinPolyVar^2 + 7\TableMinPolyVar + 3 & 22 \\
10^{1000}+46339 & (\TableMinPolyVar-26)(\TableMinPolyVar-8) & 5 \\
10^{1000}+52423 & (\TableMinPolyVar-17)(\TableMinPolyVar-16) & 4 \\
10^{1000}+55831 & \TableMinPolyVar^2 + 21\TableMinPolyVar + 4 & 8 \\
10^{1000}+57867 & (\TableMinPolyVar-13)(\TableMinPolyVar-11) & 24 \\
10^{1000}+59743 & \TableMinPolyVar^2 + 24\TableMinPolyVar + 2 & 5 \\
10^{1000}+61053 & \TableMinPolyVar^2 + 18\TableMinPolyVar + 21 & 11 \\
10^{1000}+61353 & (\TableMinPolyVar-24)(\TableMinPolyVar-1) & 25 \\
10^{1000}+63729 & (\TableMinPolyVar-20)(\TableMinPolyVar-1) & 21 \\
10^{1000}+64047 & \TableMinPolyVar^2 + 14\TableMinPolyVar + 6 & 15 \\
10^{1000}+64749 & \TableMinPolyVar^2 + 14\TableMinPolyVar + 28 & 15 \\
10^{1000}+68139 & (\TableMinPolyVar-12)(\TableMinPolyVar-2) & 14 \\
10^{1000}+68367 & \TableMinPolyVar^2 + 26\TableMinPolyVar + 26 & 3 \\
10^{1000}+70897 & \TableMinPolyVar^2 + 12\TableMinPolyVar + 28 & 17 \\
10^{1000}+72237 & \TableMinPolyVar^2 + 27\TableMinPolyVar + 13 & 2 \\
10^{1000}+77611 & (\TableMinPolyVar-14)(\TableMinPolyVar-13) & 27 \\
10^{1000}+78199 & (\TableMinPolyVar-17)(\TableMinPolyVar-14) & 2 \\
10^{1000}+79237 & \TableMinPolyVar^2 + 28\TableMinPolyVar + 25 & 1 \\
10^{1000}+79767 & \TableMinPolyVar^2 + 13\TableMinPolyVar + 16 & 16 \\
10^{1000}+82767 & (\TableMinPolyVar-27)(\TableMinPolyVar-13) & 11 \\
10^{1000}+93559 & \TableMinPolyVar^2 + 13\TableMinPolyVar + 17 & 16 \\
10^{1000}+95107 & (\TableMinPolyVar-25)(\TableMinPolyVar-24) & 20 \\
10^{1000}+100003 & (\TableMinPolyVar-26)(\TableMinPolyVar-13) & 10 \\
\hline
\end{array}
\]

\newpage

\phantomsection\label{31_24}

\noindent\textbf{Example 2:} $f_{24} \bmod 31$.

\bigskip

For the second example, we pick
\[f=f_{24}=q + 24(22+\alpha) q^2 + 36(4731-32 \alpha)q^3 + O(q^4), \]
the unique (up ot Galois conjugacy) newform of level 1 and of weight $24$, because it is the one of lowest weight whose Hecke field is strictly larger than $\Q$. More precisely, the Hecke field of $f_{24}$ is the real quadratic field $\Q(\alpha)$, $\alpha = \frac{1+\sqrt{144169}}2$. Its ring of integers is $\Z[\alpha]$.

In this field, the prime $31$ splits into $(31) = \l_5 \l_{27}$, where $\l_5 = (31,\alpha-5)$ and \linebreak $\l_{27} = (31,\alpha-27)$. Instead of presenting the results for the Galois representations attached to $f_{24}$ modulo $\l_5$ and $\l_{27}$ separately, it is more interesting to present them together, since we can then compute the coefficients $\tau_{24}(p) \bmod 31 \Z[\alpha]$ by putting together the information coming from both representations and using Chinese remainders. This is what we do in the table below.

Since $\ell=31$, we have $r=1$. The polynomial $\saint F_r(x)$ corresponding to $\saint \rho_{f_{24},\l_5}$ is

\rowcolors{2}{white}{white}
\begin{tiny}
\[ \hspace{-1.5cm} \arraycolsep=1.4pt \begin{array}{rcl} \saint F_{1}(x) & = &
x^{64} - 26 \, x^{63} + 138 \, x^{62} + 2883 \, x^{61} - 50530 \, x^{60} + 284952 \, x^{59} + 1532392 \, x^{58} - 42378023 \, x^{57} + 313778342 \, x^{56} - 30967109 \, x^{55} \\
&& - 15952723659 \, x^{54} + 120293225685 \, x^{53} - 294956419293 \, x^{52} - 2450725406897 \, x^{51} + 28694976228508 \, x^{50} - 82028806284207 \, x^{49} \\
&& - 33797566443141 \, x^{48} + 30936396673955 \, x^{47} - 25385922046683633 \, x^{46} + 285017809626505879 \, x^{45} - 101340567457478942 \, x^{44} \\
&& - 5967948306452799555 \, x^{43} + 18835587705819950118 \, x^{42} - 144943245205521339710 \, x^{41} + 602219044044458739742 \, x^{40} \\
&& + 2200535330299713709469 \, x^{39} - 16686864181478594950667 \, x^{38} + 107977341642646415867192 \, x^{37} - 475668786864492416295472 \, x^{36} \\
&& - 225298037681795144992586 \, x^{35} + 13039469950621100673089867 \, x^{34} - 37880916977102172639162818 \, x^{33} \\
&& + 23877972000622578505000183 \, x^{32} - 379716355409906474595592883 \, x^{31} - 358561841745924661422683747 \, x^{30} \\
&& + 21467502653993360143238405812 \, x^{29} - 62531950374059451763223031677 \, x^{28} - 141363172107640187136259273515 \, x^{27} \\
&& + 920893472769088633347279277260 \, x^{26} - 764513501934547521440643050277 \, x^{25} - 2227564891412996848197832943852 \, x^{24} \\
&& + 471803614818821627606852431704 \, x^{23} - 6403474778189117882143498765256 \, x^{22} + 128945287900586639765937294055323 \, x^{21} \\
&& - 267130197468879823675069343083282 \, x^{20} - 609942322537763774798637252351357 \, x^{19} + 2843848149794156824379251546718928 \, x^{18} \\
&& - 1449008974308249876681217755422392 \, x^{17} - 8609964732085444739115712428740443 \, x^{16} + 11462233793731819908607681612424601 \, x^{15} \\
&& + 16721010272893391334932201233417682 \, x^{14} - 29850257116492845020236438390839168 \, x^{13} - 85528053082348511322543845120538291 \, x^{12} \\
&& + 288505635781109866818884753868632113 \, x^{11} - 35293229333983240796518647599225700 \, x^{10} - 1277262158496478519737058759156656914 \, x^{9} \\
&& + 1834010042289159626253642058051818796 \, x^{8} + 1354316757902805387817418179095807350 \, x^{7} - 4163881920776421128809003897947900249 \, x^{6} \\
&& + 988630283825310945520835533908582035 \, x^{5} + 2040826308855028479392640356469898542 \, x^{4} - 781074320529157534608502496794137429 \, x^{3} \\
&& + 709576849443416690978774803765082127 \, x^{2} - 1543465475906955668641522308642611594 \, x + 688413259803358313348163539065291572, \\
\end{array} \]\end{tiny}

and the one corresponding to $\saint \rho_{f_{24},\l_{27}}$ is

\begin{tiny}
\[ \hspace{-1.5cm} \arraycolsep=1.4pt \begin{array}{rcl} \saint F_{1}(x) & = &
x^{64} - 13 \, x^{63} - 12 \, x^{62} + 1798 \, x^{61} - 2480 \, x^{60} - 301351 \, x^{59} + 2427920 \, x^{58} + 3549779 \, x^{57} - 128622131 \, x^{56} - 605195516 \, x^{55} \\
&& + 18083445605 \, x^{54} - 76623104240 \, x^{53} - 136111338385 \, x^{52} + 163365709662 \, x^{51} + 36207027735933 \, x^{50} - 333393729013025 \, x^{49} \\
&& + 1353870749023624 \, x^{48} - 4874235588482263 \, x^{47} + 57952977575049072 \, x^{46} - 607896973953769424 \, x^{45} + 3885848486411353707 \, x^{44} \\
&& - 19706433793139872315 \, x^{43} + 120488579146025627521 \, x^{42} - 883909787742651393957 \, x^{41} + 5725316882860134327765 \, x^{40} \\
&& - 30772173337138099500438 \, x^{39} + 159943917207673058062651 \, x^{38} - 902780142644635221738911 \, x^{37} + 5191270923286965360402518 \, x^{36} \\
&& - 27218300530032866515284399 \, x^{35} + 131834043223355056977306359 \, x^{34} - 634566137578102285193778876 \, x^{33} \\
&& + 3121681910932332495500670500 \, x^{32} - 14916061491879244185623832302 \, x^{31} + 66502847707000774372555381722 \, x^{30} \\
&& - 280063144491158854648848327512 \, x^{29} + 1151797920191329188089219069705 \, x^{28} - 4647562082419563017250271030629 \, x^{27} \\
&& + 17964227685904653209413452332198 \, x^{26} - 65006898495556449638155640530135 \, x^{25} + 220529771543741523242617521771165 \, x^{24} \\
&& - 708030865546251742399340304689884 \, x^{23} + 2183095437906409520271539169052977 \, x^{22} - 6466045440189753384271760806624755 \, x^{21} \\
&& + 18519022770605982324844617113128582 \, x^{20} - 50903095666736365236595239907177352 \, x^{19} + 135712299725345417719982183578217245 \, x^{18} \\
&& - 349024414927084414313298879270239332 \, x^{17} + 879282617681138593506051646342160011 \, x^{16} - 2128887636785999977543247137539912626 \, x^{15} \\
&& + 4959567391946018954079733252123119870 \, x^{14} - 10698310092805038208309504750205888318 \, x^{13} + 21185126053660446928251211870565927064 \, x^{12} \\
&& - 37034974052822943124568751376502208132 \, x^{11} + 57682303937811470679764738932557333147 \, x^{10} - 77659172323156765855997312303575730246 \, x^{9} \\
&& + 91059874206416211006654087253008834453 \, x^{8} - 92285656456264804316815032164880452414 \, x^{7} + 79794573183910939847907389673931597531 \, x^{6} \\
&& - 60780767548452665962995019987085052653 \, x^{5} + 37996038264233396745310228794005562702 \, x^{4} - 20277402785975735994777964167007154402 \, x^{3} \\
&& + 7574966450629297705011250772005345004 \, x^{2} - 1351637429742600734951332369647381173 \, x + 193569924383211730931468549048466113.
\end{array} \]\end{tiny}

\newpage

The images of the Frobenius elements are the following:

\bigskip

\rowcolors{2}{gray!25}{white}
\[ \hspace{-1cm} \begin{array}{|cccc|}
\hline
\rowcolor{gray!50}
p & \vphantom{\bigg\vert} \saint \rho_{f_{24},\l_5}(\operatorname{Frob}_p)&  \saint \rho_{f_{24},\l_{27}}(\operatorname{Frob}_p) & a(f_{24},p) \bmod 31 \Z[\alpha] \\
10^{1000}+453 & \TableMinPolyVar^2 + 26\TableMinPolyVar + 21 & (\TableMinPolyVar-20)(\TableMinPolyVar-15) & 1+7\alpha \\
10^{1000}+1357 & (\TableMinPolyVar-18)(\TableMinPolyVar-3) & (\TableMinPolyVar-25)(\TableMinPolyVar-22) & 1+4\alpha \\
10^{1000}+2713 & (\TableMinPolyVar-24)(\TableMinPolyVar-2) & (\TableMinPolyVar-29)(\TableMinPolyVar-7) & 4+23\alpha \\
10^{1000}+4351 & (\TableMinPolyVar-17)(\TableMinPolyVar-13) & (\TableMinPolyVar-11)(\TableMinPolyVar-6) & 9+29\alpha \\
10^{1000}+5733 & (\TableMinPolyVar-19)(\TableMinPolyVar-12) & (\TableMinPolyVar-15)(\TableMinPolyVar-9) & 3+18\alpha \\
10^{1000}+7383 & \TableMinPolyVar^2 + 4\TableMinPolyVar + 14 & (\TableMinPolyVar-7)(\TableMinPolyVar-2) & 17+2\alpha \\
10^{1000}+10401 & (\TableMinPolyVar-22)(\TableMinPolyVar-5) & \TableMinPolyVar^2 + 24\TableMinPolyVar + 17 & 9+16\alpha \\
10^{1000}+11979 & \TableMinPolyVar^2 + 17\TableMinPolyVar + 7 & \TableMinPolyVar^2 + 19\TableMinPolyVar + 7 & 6+14\alpha \\
10^{1000}+17557 & (\TableMinPolyVar-26)(\TableMinPolyVar-24) & (\TableMinPolyVar-17)(\TableMinPolyVar-13) & 1+16\alpha \\
10^{1000}+21567 & \TableMinPolyVar^2 + 6\TableMinPolyVar + 29 & \TableMinPolyVar^2 + 2\TableMinPolyVar + 29 & 10+3\alpha \\
10^{1000}+22273 & \TableMinPolyVar^2 + 10\TableMinPolyVar + 19 & (\TableMinPolyVar-16)(\TableMinPolyVar-7) & 29+17\alpha \\
10^{1000}+24493 & (\TableMinPolyVar-22)(\TableMinPolyVar-12) & (\TableMinPolyVar-25)(\TableMinPolyVar-18) & 8+30\alpha \\
10^{1000}+25947 & (\TableMinPolyVar-15)(\TableMinPolyVar-12) & (\TableMinPolyVar-24)(\TableMinPolyVar-23) & 14+15\alpha \\
10^{1000}+27057 & \TableMinPolyVar^2 + 10\TableMinPolyVar + 30 & (\TableMinPolyVar-26)(\TableMinPolyVar-25) & 17+7\alpha \\
10^{1000}+29737 & \TableMinPolyVar^2 + 3\TableMinPolyVar + 24 & \TableMinPolyVar^2 + 13\TableMinPolyVar + 24 & 19+8\alpha \\
10^{1000}+41599 & \TableMinPolyVar^2 + 11\TableMinPolyVar + 8 & \TableMinPolyVar^2 + 27\TableMinPolyVar + 8 & 18+19\alpha \\
10^{1000}+43789 & \TableMinPolyVar^2 + 14\TableMinPolyVar + 3 & \TableMinPolyVar^2 + 7\TableMinPolyVar + 3 & 14+13\alpha \\
10^{1000}+46227 & \TableMinPolyVar^2 + 15\TableMinPolyVar + 12 & \TableMinPolyVar^2 + 4\TableMinPolyVar + 12 & 29+16\alpha \\
10^{1000}+46339 & (\TableMinPolyVar-24)(\TableMinPolyVar-9) & \TableMinPolyVar^2 + 5\TableMinPolyVar + 30 & 5+18\alpha \\
10^{1000}+52423 & (\TableMinPolyVar-10)(\TableMinPolyVar-1) & \TableMinPolyVar^2 + 16\TableMinPolyVar + 10 & 27+3\alpha \\
10^{1000}+55831 & \TableMinPolyVar^2 + 7\TableMinPolyVar + 25 & (\TableMinPolyVar-28)(\TableMinPolyVar-2) & 17+20\alpha \\
10^{1000}+57867 & \TableMinPolyVar^2 + 12\TableMinPolyVar + 6 & \TableMinPolyVar^2 + 6\TableMinPolyVar + 6 & 12+20\alpha \\
10^{1000}+59743 & \TableMinPolyVar^2 + 16\TableMinPolyVar + 12 & (\TableMinPolyVar-21)(\TableMinPolyVar-5) & 28+16\alpha \\
10^{1000}+61053 & (\TableMinPolyVar-18)(\TableMinPolyVar-16) & \TableMinPolyVar^2 + 15\TableMinPolyVar + 9 & 24+2\alpha \\
10^{1000}+61353 & (\TableMinPolyVar-26)(\TableMinPolyVar-13) & \TableMinPolyVar^2 + 30\TableMinPolyVar + 28 & 11+18\alpha \\
10^{1000}+63729 & \TableMinPolyVar^2 + 4\TableMinPolyVar + 23 & (\TableMinPolyVar-18)(\TableMinPolyVar-3) & 3+11\alpha \\
10^{1000}+64047 & (\TableMinPolyVar-19)(\TableMinPolyVar-3) & (\TableMinPolyVar-13)(\TableMinPolyVar-2) & 25+18\alpha \\
10^{1000}+64749 & (\TableMinPolyVar-13)(\TableMinPolyVar-10) & (\TableMinPolyVar-17)(\TableMinPolyVar-4) & 15+14\alpha \\
10^{1000}+68139 & \TableMinPolyVar^2 + 2\TableMinPolyVar + 26 & (\TableMinPolyVar-19)(\TableMinPolyVar-3) & 1+18\alpha \\
10^{1000}+68367 & (\TableMinPolyVar-22)(\TableMinPolyVar-2) & \TableMinPolyVar^2 + 21\TableMinPolyVar + 13 & 30+5\alpha \\
10^{1000}+70897 & \TableMinPolyVar^2 + 8\TableMinPolyVar + 25 & (\TableMinPolyVar-26)^2 & 15+14\alpha \\
10^{1000}+72237 & (\TableMinPolyVar-11)(\TableMinPolyVar-2) & (\TableMinPolyVar-12)(\TableMinPolyVar-7) & 6+20\alpha \\
10^{1000}+77611 & \TableMinPolyVar^2 + 5\TableMinPolyVar + 15 & \TableMinPolyVar^2 + 28\TableMinPolyVar + 15 & 27+6\alpha \\
10^{1000}+78199 & (\TableMinPolyVar-30)(\TableMinPolyVar-28) & (\TableMinPolyVar-25)(\TableMinPolyVar-15) & 17+2\alpha \\
10^{1000}+79237 & \TableMinPolyVar^2 + 10\TableMinPolyVar + 26 & (\TableMinPolyVar-27)(\TableMinPolyVar-9) & 19+19\alpha \\
10^{1000}+79767 & (\TableMinPolyVar-15)(\TableMinPolyVar-6) & (\TableMinPolyVar-7)(\TableMinPolyVar-4) & 12+8\alpha \\
10^{1000}+82767 & (\TableMinPolyVar-13)(\TableMinPolyVar-3) & (\TableMinPolyVar-24)(\TableMinPolyVar-21) & 8+14\alpha \\
10^{1000}+93559 & (\TableMinPolyVar-15)(\TableMinPolyVar-10) & \TableMinPolyVar^2 + 8\TableMinPolyVar + 26 & 17+14\alpha \\
10^{1000}+95107 & (\TableMinPolyVar-28)(\TableMinPolyVar-20) & (\TableMinPolyVar-18)(\TableMinPolyVar-7) & 18+6\alpha \\
10^{1000}+100003 & \TableMinPolyVar^2 + 21\TableMinPolyVar + 8 & (\TableMinPolyVar-10)(\TableMinPolyVar-7) & 7+13\alpha \\
\hline
\end{array} \]

\newpage

\rowcolors{2}{white}{white}

\end{document}